\documentclass[a4paper,11pt,reqno]{article}
\usepackage[margin=1.2in]{geometry}

\usepackage{color, colortbl}

\usepackage{amsmath}
\usepackage{amsfonts}
\usepackage{amsthm}
\usepackage{amssymb}
\usepackage{tikz}
\usepackage[all]{xy}
\usepackage{tikz-cd}
\usepackage{enumerate}
\usepackage{mathtools}
\usepackage{setspace}
\usepackage{tocloft}
\usepackage{hyperref} 

\usepackage{comment}

\usepackage{float}

\setlength{\cftbeforetoctitleskip}{-3em}

\definecolor{lightgray}{gray}{0.9}
\definecolor{darkgreen}{rgb}{0,0.5,0}
\definecolor{darkblue}{rgb}{0,0.1,0.5}

\hypersetup{
    colorlinks=true, 
    linkcolor=darkblue, 
    urlcolor=blue, 
    linktoc=all, 
    citecolor=darkgreen
    }

\setlength{\parskip}{1em}

\frenchspacing 
\usepackage{thmtools}
\newtheoremstyle{introTheorems}
  {\topsep}
  {\topsep}
  {\itshape}
  {0pt}
  {\bfseries}
  {}
  { }
  {\thmname{#1}
  \textnormal{\thmnote{#3}.}
  }

\usepackage[abbrev,msc-links, alphabetic, backrefs]{amsrefs}
\usepackage{amsrefs}

\BibSpec{arXiv}{%
  +{}{\PrintAuthors}{author}
  +{,}{ \textit}{title}
  +{,}{ arXiv }{eprint}
}
\BibSpec{other}{%
  +{}{\PrintAuthors}{author}
  +{,}{ \textit}{title}
}

\newtheorem{theorem}{Theorem}[section]

\newtheorem{conjecture}[theorem]{Conjecture}

\newtheorem{definition}[theorem]{Definition}
\newtheorem{example}[theorem]{Example}
\newtheorem{lemma}[theorem]{Lemma}
\newtheorem{problem}[theorem]{Problem}
\newtheorem{question}[theorem]{Question}

\theoremstyle{introTheorems}

\usepackage[abbrev,msc-links, alphabetic, backrefs]{amsrefs}
\usepackage{amsrefs}
\usepackage{doi}

\renewcommand{\PrintDOI}[1]{\doi{#1}}

\BibSpec{arXiv}{%
  +{}{\PrintAuthors}{author}
  +{,}{ \textit}{title}
  +{}{ \parenthesize}{date}
  +{,}{ arXiv }{eprint}
}

\newcommand{\id}{{\rm id}}

\newcommand{\Rep}{{\rm Rep}}
\newcommand{\CoRep}{{\rm CoRep}}

\renewcommand{\dim}{{\rm dim}}
\newcommand{\Hom}{{\rm Hom}}
\newcommand{\Ext}{{\rm Ext}}

\newcommand{\Vect}{{\rm Vect}}


\newcommand{\cB}{\mathcal{B}}
\newcommand{\cC}{\mathcal{C}}
\newcommand{\cD}{\mathcal{D}}

\newcommand{\cH}{\mathcal{H}}
\newcommand{\cM}{\mathcal{M}}
\newcommand{\cN}{\mathcal{N}}
\newcommand{\cL}{\mathcal{L}}
\newcommand{\cZ}{\mathcal{Z}}
\newcommand{\V}{\mathcal{V}}
\newcommand{\cW}{\mathcal{W}}

\newcommand{\B}{\mathbb{B}}
\newcommand{\K}{\mathbb{K}}
\newcommand{\C}{\mathbb{C}}

\newcommand{\Z}{\mathbb{Z}}
\newcommand{\N}{\mathbb{N}}
\newcommand{\R}{\mathbb{R}}
\newcommand{\unit}{\mathbf{1}}



\newcommand{\g}{\mathfrak{g}}
\renewcommand{\sl}{\mathfrak{sl}}

\newcommand{\SL}{\mathrm{SL}}

\newcommand{\Y}{\mathcal{Y}}

\newcommand{\zem}{\mathfrak{Z}}

\renewcommand{\i}{\mathrm{i}}

\renewcommand{\S}{\mathbb{S}}
\renewcommand{\L}{\mathrm{L}}
\newcommand{\Irr}{\mathrm{V}}
\newcommand{\Verma}{\mathrm{M}}

\newcommand{\Nichols}{H}
\newcommand{\NicholsOf}{\mathfrak{B}}

\usepackage[font={small}]{caption}

\title{Nichols algebras, tensor categories and \\logarithmic Kazhdan-Lusztig correspondences \\
\vspace{.5cm}
A personal tour with the main players.
\author{Simon D. Lentner\\ University of Hamburg }
\date{}
}

\begin{document}

\maketitle

There is a very general picture emerging that conjecturally describes what happens to the representation theory of a vertex algebra $\V$ if we pass to the kernel $\cW$ of a set of screening operators. Namely, the screening operators generate a Nichols algebra $\Nichols$ inside $\Rep(\V)$ and in many cases $\Rep(\cW)$ coincides with the relative Drinfeld center of $\Rep(\Nichols)$. This vastly generalizes the construction of a quantum group as the Drinfeld double of a Nichols algebra over the Cartan part. In this example, the conjectural category equivalence has  been studied since around $20$ years as {\it logarithmic Kazhdan Lusztig correspondence}. 

The present survey was part of my habilitation thesis about my work in this area. I want to make it available as an introductory text, intended for readers from a pure algebra background as well as from a physics background. I motivate and explain gently and informally the different topics involved (quantum groups, Nichols algebras, vertex algebras, braided tensor categories) with a distinct categorical point of view, to the point that I can explain my general expectation. Then I explain some previous results and explain the main techniques in my recent proof of the conjectured category equivalence in case $\V$ is a free field theory and under technical assumptions on the analysis side.    


\vspace{3cm}

\tableofcontents

\newpage

\begin{quote}
As far as I can see, \\
all a-priori statements in physics \\
have their origin in symmetry.\\
(Hermann Weyl 1885-1955)

\bigskip

Romanticizing is like algebraicizing.\\
(Novalis 1772-1801) 
\end{quote}

\section{Symmetries and Lie algebras}

Symmetry is a fundamental experience throughout mathematics and physics.  A more refined idea is: In the presence of symmetry, individual objects can be analyzed by how they \emph{fail} to be fully symmetric and they can be grouped together by symmetry operations. This leads to structural questions on the symmetries as an  abstract group and on its representation theory. 

Historically, this perhaps appears first in Galois theory: Lagrange  in \cite{Lang1770} analyzed the existing solution formulae of the cubic and quartic polynomial equation by defining \emph{resolvents}, expressions that are symmetric under a subset of permutations of all solutions. Ruffini \cite{Ruff1813} and Abel \cite{Ab1826} showed along these lines
that there cannot be a solution of the general $n$-th order polynomial equation by radicals for $n>4$. In 1830 Galois devised a general program of studying algebraic equations in terms of their symmetry groups, in particular the Abel-Ruffini theorem can now be understood to directly reflect the fact that the symmetric group $\S_n$ is for $n>4$ no longer solvable in terms of cyclic~groups.

In \cite{Lie1888} Lie worked with Engel on the ambitious idea of devising a version of Galois theory for differential equations. Perhaps this is best illustrated in the topic of \emph{spherical harmonic functions}, 
which are in a certain sense generalizations of the trigonometric functions to the sphere and play a fundamental role in virtually any analytic problem with spherical symmetry (gravitational or magnetic field of the earth, atomic orbital, etc): Consider the eigenproblem of the spherical Laplace operator
$$\Delta f =c f,$$
where $f:\mathrm{S}^2\to \C$ is an eigenfunction on the sphere and $c\in\C$ the eigenvalue. For example, the reader may be interested in the different ways in which a spherical bell can vibrate and resonate, then a solution $f$ describes a particular spatial pattern of vibration and~$c$ is related to its frequency, so the eigenvalue problem essentially asks for the overtone series. For the sphere, this eigenvalue problem is symmetric under the group of rotations $G=\mathrm{SO}_3(\R)$, and infinitesimally under the Lie algebra $\g=\mathfrak{so}_3$, which acts by certain first order differential operators $\i \L_x,\i \L_y,\i \L_z$ that satisfy the commutator relation $[\L_x,\L_y]=\i\L_z$ and similar relations for $x,y,z$ cyclically permuted. Note that the factor $\i$ becomes just convention when we consider this as a Lie algebra over the complex numbers. The Laplace operator $\Delta$ itself can be expressed as a multiple of the element $\L_x^2+\L_y^2+\L_z^2$ in the universal enveloping algebra $U(\g)$, which is a central element, as it should be for an operator preserved by all symmetries. 

A particular solution $f$ is typically \emph{not} symmetric, so the space of solutions for fixed eigenvalue $c$ has the structure of a representation of $G$ and $\g$, that is, there is a basis of eigenfunctions, such that after rotating an eigenfunction it can be again expressed as a certain linear combination of this basis. Moreover, the product of two eigenfunctions can be expressed in this bases, possibly involving other eigenvalues. The representations and their tensor products are the algebraic content of the problem.

\begin{figure}[h]\label{fig_harmonics}
\begin{center}
\includegraphics[scale=.30]{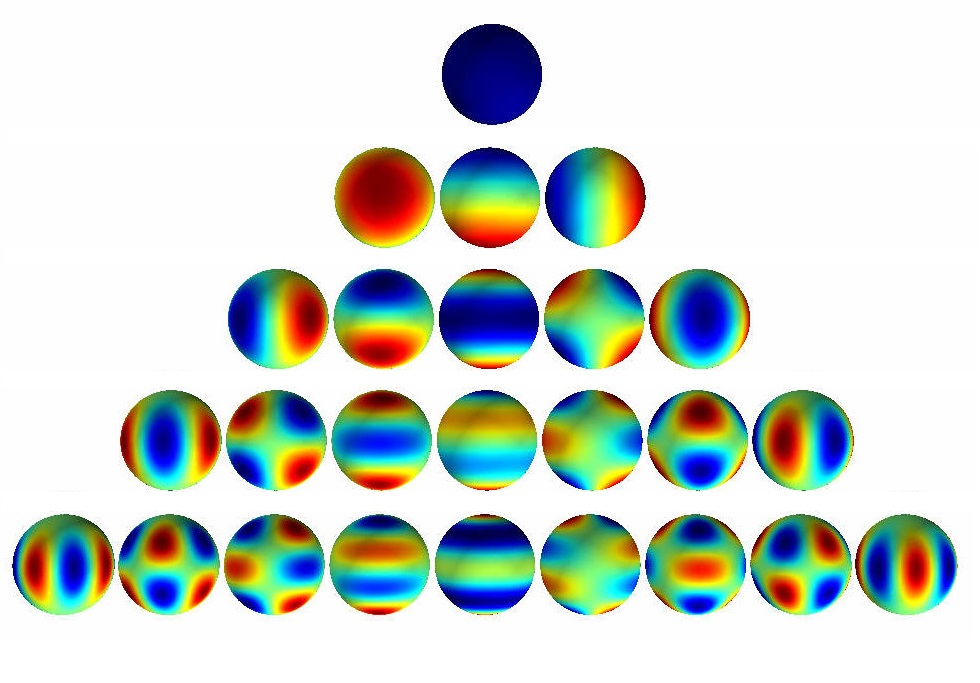}
\caption[From F. Pokorny, KTH Royal Institute of Technology, personal website]{Each row $s=0,1,2,\ldots$ shows a basis of an eigenspace of the spherical Laplace operator, which has eigenvalue $s(s+1)$ and dimension $2s+1$. The horizontal grading is by the $\L_z$-eigenvalue $m=-s,\ldots, s$ and shows solutions with $2|m|$-fold symmetry around the arbitrarily chosen $z$-axis.}
\end{center}
\end{figure}

When we solve this differential equation, 
we empirically find eigenvalues $c=s(s+1)$ for $s=0,1,2,\ldots$ with eigenspaces of every odd dimension $2s+1$, which are irreducible representations of $\mathrm{SO}_3(\R)$. It starts with the 1-dimensional trivial  representation (a symmetric solution, $s$-orbital) and the $3$-dimensional standard representation (a wobble in some single direction, $p$-orbital). The tensor product of two representations again decomposes into such irreducible representations according to the \emph{Clebsch-Gordan rule}:
$$V_s \otimes V_{s'}
=V_{s+s'}\oplus \cdots \oplus V_{|s-s'|}.
$$
Our main point is that important quantities like eigenvalues, dimension and structure of the eigenspaces, and product decompositions can be derived with purely algebraic methods. The symmetry group and its representation theory govern the analytic problem. 

The representations $V$ of a semisimple complex finite-dimensional Lie algebra~$\g$ can be classified and understood as follows: One chooses a commutative subalgebra $\mathfrak{h}\subset \g$ called \emph{Cartan subalgebra}, which is endowed with the nondegenerate \emph{Killing form} $(-,-)$. In our example we choose $\mathfrak{h}$ to be the span of $\L_z$ or $H=2\L_z$. Now for any representation~$V$, where we assume that $\mathfrak{h}$ acts diagonalizably, we may take a simultaneous eigenbasis for~$\mathfrak{h}$ with simultaneous  eigenvalues $\mu \in \mathfrak{h}^*$ called \emph{weights}, that is
$h.v=\mu(h)v$ for all $h\in\mathfrak{h}$. We apply this in particular to the Lie algebra itself with adjoint action $\g_{\mathrm{ad}}$, then the simultaneous eigenvalues are called \emph{roots} $\alpha$. In our example~$\g=\mathfrak{so}_3$ we have in $\g_{\mathrm{ad}}$ the $\L_z$-eigenvectors $E=\L_x+\i \L_y$ and $F=\L_x-\i \L_y$ with $H$-eigenvalues $\pm 2$, or roots $\pm\alpha$ with~$\alpha(H)=2$. In this basis, the commutator relations of $\mathfrak{so}_3$ are as follows:
$$[H,E]=2E,\quad [H,F]=-2F,\quad [E,F]=H.$$
As a remark, in this basis the Lie algebra $\mathfrak{so}_3$ shows to be isomorphic to the Lie algebra $\mathfrak{sl}_2$, over the complex numbers. The action of an eigenvector $x$ in $\g_{\mathrm{ad}}$ with root~$\alpha$ shifts an eigenvector $v$ in $V$ with weight $\mu$ to an eigenvector $xv$ with weight $\mu+\alpha$. In our example, representations are graded by weights $\mu=m\alpha$ and the action by $E,F$ shifts them by~$\pm\alpha$:
\[\begin{tikzcd}[row sep=1ex, column sep=6ex]
\cdots
\arrow[r,swap, shift right=2pt,"E"] 
& 
\arrow[l,swap, shift right=2pt,"F"] 
v'''
\arrow[r,swap, shift right=2pt,"E"] 
& 
\arrow[l,swap, shift right=2pt,"F"] 
v''
\arrow[r,swap, shift right=2pt,"E"] 
& 
\arrow[l,swap, shift right=2pt,"F"] 
v'
\arrow[r,swap, shift right=2pt,"E"] 
& 
\arrow[l,swap, shift right=2pt,"F"] 
\cdots
\\
&
\scriptstyle (m-2)\alpha
&
\scriptstyle (m-1)\alpha
&
\scriptstyle m\alpha
&
\scriptstyle (m+1)\alpha
\end{tikzcd}\]
One can construct general representations called \emph{Verma module} $\Verma_\lambda$ for each weight $\lambda$ and a chosen set of positive roots. In our example, we choose $E$ with eigenvalue $\alpha(H)=+2$, then we define a \emph{highest weight vector} $v$ to be an eigenvector with eigenvalue $\lambda(H)$ for some weight $\lambda=s\alpha$, with the additional property $Ev=0$. Then we induce this representation to a representation of all of $\g$ by adding formal base elements $F^kv$ for all $k\in\N$:
\[\begin{tikzcd}[row sep=1ex, column sep=6ex]
\cdots
\arrow[r,swap, shift right=2pt,"E"] 
& 
\arrow[l,swap, shift right=2pt,"F"] 
F^2v
\arrow[r,swap, shift right=2pt,"E"] 
& 
\arrow[l,swap, shift right=2pt,"F"] 
Fv
\arrow[r,swap, shift right=2pt,"E"] 
& 
\arrow[l,swap, shift right=2pt,"F"] 
v 
&
\hphantom{\cdots}
\\
&
\scriptstyle (s-2)\alpha
&
\scriptstyle (s-1)\alpha
&
\scriptstyle s\alpha
&
\end{tikzcd}\]
This infinite-dimensional representation of $\g$ turns out to be irreducible except if $s\in\frac{1}{2}\N_0$, because in this case the commutator relations show that the arrow $\smash{F^{2s+1}v\stackrel{\scriptstyle E}{\longrightarrow} F^{2s}v}$ happens to be zero. The quotient by the submodule spanned by $F^kv$, $k>2s$ is an irreducible representation $\Irr_\lambda$ of dimension $2s+1$ and the weights   exhibit a nice reflection symmetry:
\[\begin{tikzcd}[row sep=1ex, column sep=6ex]
F^{2s}v
\arrow[r,swap, shift right=2pt,"E"] 
& 
\arrow[l,swap, shift right=2pt,"F"] 
\cdots
\arrow[r,swap, shift right=2pt,"E"] 
& 
\arrow[l,swap, shift right=2pt,"F"] 
v 
\\
\scriptstyle -s\alpha
&
&
\scriptstyle s\alpha
\end{tikzcd}\]
\vspace{-.6cm}

We remark that in physics $s$ is the angular momentum, for example in particle physics the spin, and the Clebsch-Gordan rule describes spin addition for pairs of particles. The $\mathfrak{so}_3$-representations with $s$ half-integer do not lift to representations of $G=\mathrm{SO}_3(\R)$, but to representations of the central extension $G=\mathrm{SU}_2(\C)$, and this corresponds to fermions.

A similar procedure works for any semisimple complex finite-dimensional Lie algebra, and it leads to the classification of such Lie algebras by axiomatized \emph{root systems} $\Phi\subset \mathfrak{h}^*$ and related reflection groups called \emph{Weyl groups}. Root systems in turn are classified in terms of \emph{Dynkin diagrams}, where the vertices depict a basis of \emph{simple roots} $\alpha_1,\ldots,\alpha_n$ in~$\mathfrak{h}$ and the edges express information about the Killing form $(\alpha_i,\alpha_j)$:

\begin{figure}[H]\label{fig_Dynkin}
\begin{center}
\includegraphics[scale=0.7]{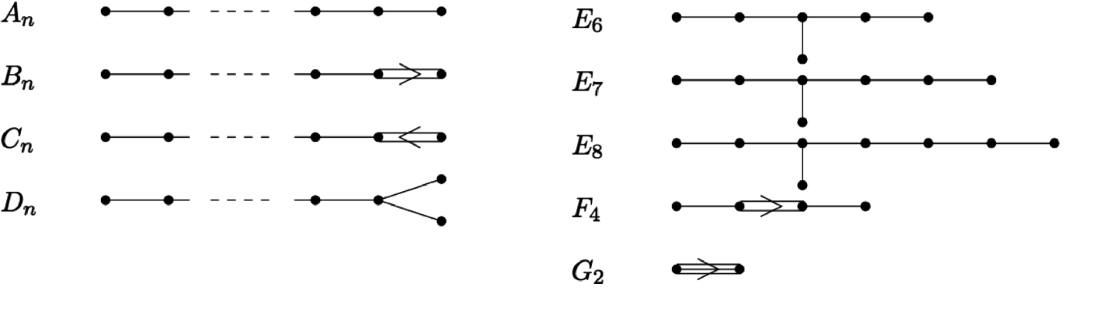}
\caption[Eckes, C., Giardino, V. (2018). The Classificatory Function of Diagrams: Two Examples from Mathematics. In: Diagrammatic Representation and Inference. Lecture Notes in Computer Science, vol 10871, Springer.]{Each Dynkin diagram encodes a root system, a reflection group and a simple Lie algebra. \\ A single node (type $A_1$)  corresponds to our example $\sl_2,\mathfrak{so}_3$ with roots $\pm\alpha$ and Weyl group~$\S_2$. }
\end{center}
\end{figure}

Besides the classification of Lie algebras in terms of root systems, the procedure provides a complete classification of finite-dimensional irreducible representations of $\g$ in terms of integral dominant weights~$\lambda$.  

\begin{figure}[H]
\begin{minipage}{.25\textwidth}
\includegraphics[scale=0.8]{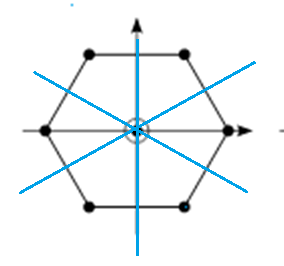}
\end{minipage}
\hspace{.4cm}
\begin{minipage}{.6\textwidth}
\includegraphics[scale=1.7]{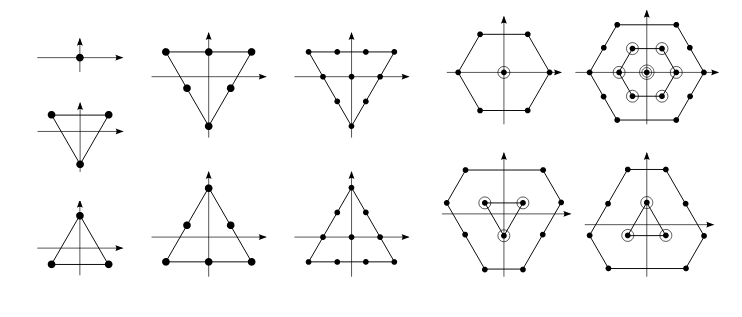}
\end{minipage}
\caption[From Schaeffer, J. (2022). SU(n), Darstellungstheorie und deren Anwendung im Quarkmodell. BestMasters. Springer Spektrum.]{The root system $A_2$ with $3$ positive and $3$ negative roots. Blue lines depict the $3$ orthogonal hyperplanes in $\R^2$, and the reflections on them generate the Weyl group $\S_3$. This encodes the Lie algebras $\mathfrak{sl}_3$ and $\mathfrak{su}_3$, which are isomorphic over $\C$.\\
To the right, the smallest irreducible representations, each point corresponding to a weight. These representations appear e.g. in the physics of strong interaction as part of the standard model and describe different particle multiplets (quarks, antiquarks, gluons, mesons, etc.).
}
\end{figure}

\begin{figure}[H]
\begin{center}
\includegraphics[scale=.25]{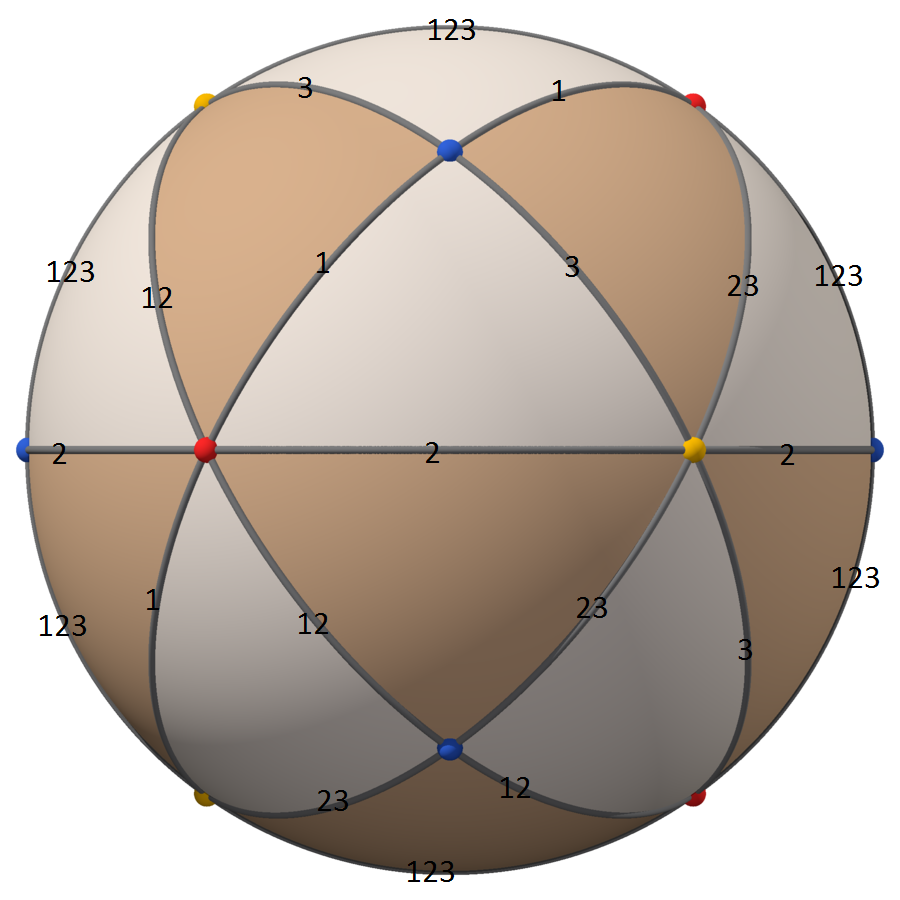}
\caption[From T. Piesk, Wiki Commons, Category: Spherical disdyakis polyhedrapedia]{Root system $A_3$ with $6$ positive roots named $1,2,3,12,23,123$, corresponding to $6$ hyperplanes through the origin in $\R^3$, and $24$ Weyl chambers permuted by the Weyl group $\S_4$. This encodes the Lie algebras $\mathfrak{sl}_4$, $\mathfrak{su}_4$ and $\mathfrak{so}_6$, which are isomorphic over $\C$.}
\end{center}
\end{figure}

We now turn to an example of \emph{quantum field theory}: 
Let $V_{1},\ldots ,V_{n}$ be irreducible representations of $\g$. Let $\Sigma$ be a Riemann surface with distinct punctures $z_1,\ldots,z_n$. Physically, we might imagine in each puncture $z_i$ some particle in the multiplet $V_i$.

\begin{figure}[h]
\begin{center}
\tikzset{pics/.cd,
	handle/.style={code={
			\draw[fill=gray!10]  (-2,0) coordinate (-left) 
			to [out=260, in=60] (-3,-2) 
			to [out=240, in=110] (-3,-4) coordinate (-k)
			to [out=290,in=180] (0,-6) coordinate (-num)
			to [out=0,in=250] (3,-4) 
			to [out=70,in=300] (3,-2) 
			to [out=120,in=280] (2,0)  coordinate (-right);
			\pgfgettransformentries{\tmpa}{\tmpb}{\tmp}{\tmp}{\tmp}{\tmp}
			\pgfmathsetmacro{\myrot}{-atan2(\tmpb,\tmpa)}
			\draw[rotate around={\myrot:(0,-2.5)}] (-1.2,-2.4) to[bend right]  (1.2,-2.4);
			\draw[fill=white,rotate around={\myrot:(0,-2.5)}] (-1,-2.5) to[bend right]coordinate[pos=0.5] (-B) coordinate[pos=1] (-D) (1,-2.5) 
			to[bend right] coordinate [pos=1] (-E) coordinate[pos=0.7] (-A) coordinate[pos=0.4] (-C) (-1,-2.5);
}}}

	\fontsize{8}{8}\selectfont
	\begin{tikzpicture}[scale=0.7]
	\newcommand\shift{0}
	
	\pic[rotate=-90,scale=0.4] (tr) at (0-\shift,0) {handle};
	\pic[rotate=90,scale=0.4] (tl) at (0-\shift,0) {handle};
	\draw[blue,rotate=-270, 
	fill=lightgray] (1+\shift,-2.5)  circle (10pt) node {$V_1$} ;
	\draw[blue,rotate=-270, 
	fill=lightgray] (0+\shift,-3)  circle (10pt) node {$V_2$} ;
\end{tikzpicture}
\normalsize
\end{center}
\caption[Own creation with TikZ]{A Riemann surface of genus $2$ with $2$ punctures.}
\end{figure}
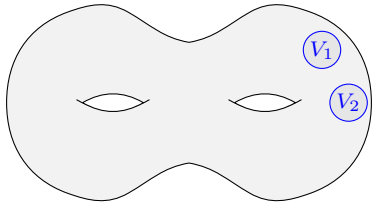

For any choice $\kappa\in\R$ called \emph{shifted level}, and choice of a complex structure on $\Sigma$, we consider 
the \emph{Knishnik-Zomolochikov} differential equation for a  function $\Psi(z_1,\ldots,z_n)$ taking values in $V_1\otimes\cdots\otimes V_n$:
\begin{align}\label{exm_KZequation}
    \left(\kappa\frac{\partial}{\partial z_i} 
 + \sum_{j\neq i}\frac{1}{z_i-z_j}\Omega_{ij}\right)
 \Psi(z_1,\ldots,z_n) =0,
\end{align}
where in our example $\mathfrak{so}_3$ we take $\Omega=\frac12(\L_x\otimes \L_x+\L_y\otimes \L_y+\L_z\otimes \L_z)$
acting on the tensor factors $V_{i}\otimes V_{j}$. Then the space of solutions of this differential equation has a nontrivial monodromy around the singularities $z_i=z_j$, which can be turned into a nontrivial \emph{braiding morphism}
$$V_i\otimes V_j \longrightarrow V_j\otimes V_i,$$
%
%
see \cites{Koh88,Drin89}. Drinfeld defined the \emph{quantum group} $U_q(\g)$ as a deformation of $U(\g)$ by a formal parameter $\smash{q=e^{\frac{\pi\i}{\kappa}}}$. Its representation theory coincides with the representation theory of $\g$, and it also admits a commutative tensor product $V\otimes W$, but the braiding morphism is nontrivial.
This leads to the axiomatization of a \emph{braided tensor category}, a representation theory with a commutaive tensor product and good properties, even if there is no underlying (symmetry-) group anymore. 

Lusztig, who had previously made significant contributions to the representation theory of Lie groups in finite characteristic \cite{Lusz84}, realized that if the quantum group parameter $q$ is specialized to a root of unity, then the representation theory is not semisimple anymore and it mirrors many features of the finite characteristic setup. He defined a finite-dimensional quotient called \emph{small quantum group} $u_q(\g)$, whose representation theory is related to the representation theory of $\g$ over a field of finite characteristic.

\begin{quote}
In my work, I construct, study and classify quantum groups and braided tensor categories. Based on this, I construct new conformal quantum field theories that involve such representation theories.
\end{quote}  

\section{Quantum groups and generalizations from a categorical perspective}\label{sec_quantumgroups}

Let me first sketch the construction of a quantum group, from my present more categorical perspective. This perspective is for me a consequence of being exposed as a student in Munich by H.-J. Schneider and Y. Sommerhäuser to the classification program of Hopf algebras, and as a postdoc in Hamburg by C. Schweigert and I.~Runkel to the language of tensor categories. Effectively, this perspective  produces a vast amount of new \emph{generalized quantum groups} in the sense of nonsemisimple (modular) braided tensor categories obtained from a given semisimple (modular) braided tensor category, as I discuss now.

\bigskip

Recall the definition of a braided tensor category \cite{EGNO15}: It is a \emph{category} $\cC$ consisting of objects $X,Y$ etc. and sets of morphisms $\Hom(X,Y)$, which are $\K$-vector spaces. We assume all the typical features, such as kernels, direct sums, etc. In such  a category, we are interested in the simple objects, indecomposable objects and projective objects. The reader may have in mind representations of a finite-dimensional nonsemisimple algebra. A \emph{tensor category} is endowed with a tensor product of objects $X\otimes Y$ and morphisms, and with a choice of a natural isomorphism implementing associativity
$$a_{X,Y,Z}:\;(X\otimes Y)\otimes Z \stackrel{\sim}{\longrightarrow} X\otimes (Y\otimes Z).$$
A \emph{braided tensor category} $\cC$ is moreover endowed with a choice of a natural isomorphism implementing commutativity of the tensor product 
$$c_{X,Y}:\;X\otimes Y \stackrel{\sim}{\longrightarrow} Y\otimes X$$
that fulfills the relations of the braid group. If $c_{Y,X}c_{X,Y}=\id$, then the braiding is called \emph{symmetric}, since it then fulfills the relations of the symmetric group. We are interested in cases where this is maximally not the case, then the braiding is called \emph{nondegenerate}. 

\begin{example}
    For a group $\Gamma$, the category $\Vect_\Gamma$ of $\Gamma$-graded $\K$-vector spaces has simple objects $\K_a,\,a\in\Gamma$, the one-dimensional vector space of degree $a$, and any object is a direct sum of these, so the category is semisimple.

    Similar to the category of vector spaces, this becomes a tensor category with the trivial associator, and if $\Gamma$ is abelian with the trivial  braiding. But we may also modify them by suitable choices of scalars $\omega:\Gamma\times\Gamma\times\Gamma\to \K^\times$ and $\sigma:\Gamma\times\Gamma\to \K^\times$ as follows:
$$
\begin{tikzcd}
(\K_a\otimes \K_b)\otimes \K_c
\arrow[rr, dashed,"a_{\K_a,\K_b,\K_c}"]
\arrow[d,equal]
&&
\K_a\otimes (\K_b\otimes \K_c)
\arrow[d,equal]
\\
\K_{a+b+c}
\arrow{rr}{\omega(a,b,c)}
&&
\K_{a+b+c}
\end{tikzcd}
\begin{tikzcd}
\K_a\otimes \K_b
\arrow[rr, dashed,"c_{\K_a,\K_b}"]
\arrow[d,equal]
&&
\K_b\otimes \K_a
\arrow[d,equal]
\\
\K_{a+b}
\arrow{rr}{\sigma(a,b)}
&&
\K_{b+a}
\end{tikzcd}
$$
The choices of scalars $(\omega,\sigma)$ up to the natural equivalence relation are classified by quadratic forms on the abelian group $\Gamma$, see \cites{MacL52,JS93}. The braiding is nondegenerate iff the quadratic form is nondegenerate. 
\end{example}

A source of tensor categories $\cB$ are (categorical) Hopf algebras: Let $\Nichols$ be an algebra in a tensor category $\cC$. There is a straightforward definition of a $\Nichols$-representation on an object $M$ inside $\cC$, 
$$\rho:\Nichols\otimes M\to M,$$
and accordingly there is an abelian category of representations of $H$ inside $\cC$
$$\cB=\Rep(\Nichols)(\cC).$$ 
\begin{example}
    An algebra $\Nichols$ in the category of $\Gamma$-graded vector spaces $\cC=\Vect_\Gamma$ is a $\Gamma$-graded algebra. A module over $\Nichols$ in $\cC$ is a $\Gamma$-graded vector spaces with an action of $\Nichols$, such that the action is compatible with the $\Gamma$-gradings of $\Nichols$ and $M$
\end{example} 
There are two different ideas to construct a tensor product in $\Rep(\Nichols)$: We may want to take a tensor product $M\otimes_H N$ over $H$, provided $\Nichols$ is a commutative algebra, this is discussed in Section~\ref{sec_quantumgroups}. We may alternatively want to take the tensor product  $M\otimes N$ in the underlying category $\cC$, then we need some rule how $\Nichols$ can act on two factors. A \emph{bialgebra} is an algebra together with the morphisms in the tensor category
$$\Delta:\Nichols\to \Nichols\otimes\Nichols,\qquad 
\varepsilon:\Nichols\to \unit.$$
The axioms are in such a way that  $\varepsilon$ produces an action on the tensor unit $\unit$ and $\Delta$ produces an action on $M\otimes N$ via 
$$\Nichols\otimes (M\otimes N)
\stackrel{\Delta}{\longrightarrow}
(\Nichols\otimes \Nichols)\otimes (M\otimes N)
\stackrel{c_{\Nichols,M}}{\longrightarrow}
(\Nichols\otimes M)\otimes (\Nichols\otimes N)
\stackrel{\rho \otimes \rho}{\longrightarrow}
M\otimes N.
$$
Note that we need $\cC$ to be braided for this definition, or alternatively to be able to formulate how $H\otimes H$ is an algebra in $\cC$. For some early appearances of this ides, see \cites{Ros78, Rad85, Maj95, Ber95}.

 A \emph{Hopf algebra} has an additional antialgebra morphism $S:\Nichols\to\Nichols$ that produces an action on the dual object $M^*$ in the sense of rigidity. A \emph{quasi-triangular Hopf algebra} has an additional element $R\in H\otimes H$ that produces a braiding for $H$-modules. Good references for Hopf algebras in vector spaces are \cites{Mon93, Kas97, Schn95}.

\begin{example}
Let $\cC$ be the category of vector spaces with the trivial braiding. Two very familiar examples of Hopf algebras, which motivate much of the theory, are as follows:
\begin{itemize}
\item Let $G$ be a group, then the group algebra $\K G$ is the vector space spanned of $G$, with the multiplication inherited from $G$. It becomes a Hopf algebra with the structures
$$\Delta(g)=g\otimes g,\qquad \varepsilon(g)=1,\qquad S(g)=g^{-1}.$$
This additional structures describes how a group acts usually on the tensor product, the trivial representation and the dual representation.
\item Let $\g$ be a Lie algebra, then the universal enveloping algebra $U(\g)$ is the tensor algebra of $\g$ (i.e. the free algebra in a basis of $\g$) modulo relations $xy-yx=[x,y]_\g$. It becomes a Hopf algebra with the structures
$$\Delta(x)=x\otimes 1+1\otimes x,\qquad \varepsilon(x)=0,\qquad S(x)=-x.$$
This additional structures describes how a Lie algebra, for example an algebra of derivations, acts usually on the tensor product, the trivial representation and the dual representation.
\end{itemize}
Both tensor categories admit a trivial braiding, because $\Delta$ is cocommutative.
\end{example}
If $\cC$ itself is the braided tensor category of representations of a quasitriangular Hopf algebra $L$, then $\Rep(\Nichols)(\cC)$ consists of the representations over the Radford biproduct or smash product $\Nichols\# L$, which I like to denote analogously to the semidirect product of groups $\Nichols\rtimes L$. Note that the setup is slightly different then in literature, where often $L$-Yetter-Drinfeld modules are used instead of assuming $L$ to be quasitriangular. 

Conversely, Tannaka-Krein reconstruction, which was originally devised for algebraic groups, states in the version in \cite{Schau91} that any tensor category $\cB$ with a faithful exact tensor functor $F:\cB\to \Vect$ is equivalent to the category of representations of a Hopf algebra $\Nichols\in\Vect$ and $F$ is the functor forgetting the action. In \cite{LM24} we give the following relative version for fibre functors $F:\cB\to \cC$.  In a Hopf algebra setting it reduces to a version of the Radford projection theorem \cite{Rad85}, which in turn generalizes the fact that a split exact sequence of groups leads to a semidirect product.

\begin{theorem}\label{thm_LM}
Let $\cB\supset \cC$ be finite rigid monoidal categories and let $\cC$ be a central braided subcategory. Then the existence of a faithful exact tensor functor $F:\cB\to \cC$, which is the identity on $\cC$, is equivalent to the existence of a Hopf algebra $\Nichols$ in $\cC$ such that 
$$\cB=\Rep(\Nichols)(\cC)$$
\end{theorem}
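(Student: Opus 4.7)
The reverse implication is straightforward: if $\cB = \Rep(\Nichols)(\cC)$ for a Hopf algebra $\Nichols$ in $\cC$, the forgetful functor $\cB \to \cC$ is a faithful, exact tensor functor, and it restricts to the identity on the full subcategory $\cC \hookrightarrow \cB$ obtained by letting $\Nichols$ act through its counit $\varepsilon$. This subcategory is central because the $\Nichols$-action on a tensor product of a trivially-acted $\cC$-object with an arbitrary $\cB$-object is controlled entirely by $\Delta$ and the braiding of $\cC$ with itself, so every such object carries a canonical half-braiding with every object of $\cB$.

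For the forward implication, I would mimic the classical Tannaka--Krein reconstruction of \cite{Schau91}, internalised to $\cC$. Concretely, define
\[
\Nichols \;=\; \int^{X\in\cB} F(X)^* \otimes F(X),
\]
where the coend is computed in $\cC$. Since $F$ has image in $\cC$ and $\cB$ is finite, this coend exists as a finite colimit over a projective generator $P$ of $\cB$ and its endomorphism algebra. The universal dinatural maps $\iota_X : F(X)^* \otimes F(X) \to \Nichols$ endow $\Nichols$ with a coalgebra structure (coproduct from composition of endomorphisms, counit from evaluation), monoidality of $F$ supplies a multiplication through $F(X\otimes Y) \simeq F(X)\otimes F(Y)$, and rigidity of $\cB$ gives the antipode by dualising the $X$-slot of the coend.

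The main obstacle, and the step that truly uses the hypothesis that $\cC$ is a \emph{central} braided subcategory of $\cB$, is the bialgebra compatibility between product and coproduct. Composing these maps on $\Nichols\otimes\Nichols$ requires swapping $\cC$-factors past the $F(X), F(Y)$-factors inside the coend, and the needed natural half-braidings of objects of $\cC$ with arbitrary objects of $\cB$ are precisely what the lift $\cC \to \cZ(\cB)$ provides. Once the Hopf algebra axioms are verified, each $F(X)$ inherits a canonical $\Nichols$-module structure from $\iota_X$, yielding a tensor functor $\widetilde{F}: \cB \to \Rep(\Nichols)(\cC)$ lifting $F$ and restricting to the identity on $\cC$. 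That $\widetilde{F}$ is an equivalence I would deduce by Barr--Beck monadicity applied to the adjunction between $F$ and the free module functor $-\otimes \Nichols : \cC \to \Rep(\Nichols)(\cC)$; the assumptions that $F$ is faithful, exact, and the identity on $\cC$ supply exactly the reflexive-coequaliser conditions needed to invoke the theorem, reducing the forward direction to a formal categorical step once $\Nichols$ and its Hopf structure have been constructed.
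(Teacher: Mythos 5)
The survey does not supply a proof of Theorem~\ref{thm_LM}; it is quoted from \cite{LM24}, and the only hint given is the remark that in the Hopf-algebraic setting it reduces to Radford's projection theorem. So there is no internal argument here to compare your sketch against; I can only assess its plausibility.

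Your overall route --- Tannaka--Krein reconstruction internal to $\cC$ via a coend, a Hopf structure from the monoidality of $F$, rigidity of $\cB$, and the braiding of $\cC$, and then an identification of $\cB$ with (co)modules of $\Nichols$ by a (co)monadicity argument --- is the standard modern blueprint and is very likely the shape of the actual proof in \cite{LM24}. Your reverse direction is fine. But the forward direction as written has two gaps worth repairing. First, the coend $\int^{X\in\cB}F(X)^*\otimes F(X)$ is naturally a \emph{coalgebra} in $\cC$, and $F(X)$ is naturally a \emph{comodule} over it via the universal cowedge; reconstruction directly gives $\cB\simeq\mathrm{Comod}(\Nichols)(\cC)$, not $\Rep(\Nichols)(\cC)$. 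To land on the module side as the theorem is stated you must dualize (pass to the dual Hopf algebra, which is available since everything is finite) or instead work with the end $\int_{X}F(X)\otimes F(X)^*$; this is harmless but should be made explicit, since the survey uses $\Rep(\Nichols)(\cC)$ for module categories throughout. Second, and more seriously, your closing Barr--Beck paragraph does not actually close the argument. That $F$ is (co)monadic is routine for a faithful exact functor between finite abelian categories; what genuinely requires proof is that the resulting (co)monad on $\cC$ is of the $\cC$-linear form $\Nichols\otimes(-)$ and that the induced monoidal structure on its module category is the Radford/smash-product one, so that the equivalence of abelian categories upgrades to a monoidal one. This is where the centrality hypothesis on $\cC\subset\cB$ is doing essential work: the subcategory of trivially-acting objects in $\Rep(\Nichols)(\cC)$ is \emph{always} central, so without centrality in $\cB$ no monoidal equivalence could exist. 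Accordingly, your localisation of centrality to ``the bialgebra compatibility between product and coproduct'' is probably not the whole story --- the braiding of $\cC$ alone already lets you write a candidate multiplication on the coend, and centrality is also needed to make the reconstruction monad $\cC$-linear and to match the half-braidings. I would redo that paragraph so that both of these uses are made precise rather than folded into the slogan that Barr--Beck ``supplies exactly the reflexive-coequaliser conditions needed.''
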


A very interesting source for Hopf algebras in a braided tensor category $\cC$ is the \emph{Nichols algebra} $\NicholsOf(X)$, which is attached to any choice of an object $X\in\cC$ as follows: We can always turn the tensor algebra of $X$ into a Hopf algebra by setting $\Delta(x)=1\otimes x+x \otimes 1$, in which case $x$ is called a \emph{derivation} or \emph{primitive element}. Then $\Delta$ of formal products of such elements is uniquely fixed by multiplicativity and this depends on the choice of the braiding. This may lead to surprises, for example a power  of  a primitive element may happen to be primitive itself. The Nichols algebra $\NicholsOf(X)$ is the quotient by the largest Hopf ideal intersecting trivially with $X$. In particular, it contains no primitive elements other then $X$, and this turns out to be an equivalent characterization.
\begin{example}\label{exm_NicholsRank1}
Let us take $\cC=\Vect_\Gamma^{\omega,\sigma}$, and let $X=\K_a$ be the one-dimensional vector space in degree $a\in \Gamma$ with basis $x$. We compute 
$$\Delta(x^2)=\Delta(x)^2=(1\otimes x+x\otimes 1)^2
=x^2\otimes 1+(x\otimes x)+\sigma(a,a)(x\otimes x)+x^2\otimes 1.$$
In particular if $\sigma(a,a)=-1$, then $x^2$ is a again primitive. In this case, the relation $x^2=0$ is compatible with the Hopf algebra structure and thus becomes a relation in the Nichols algebra. Similarly, if $\sigma(a,a)$ is a primitive $\ell$-th root of unity, then the Nichols algebra is, at least in characteristic zero  
$$\NicholsOf(X)=\K[x]/(x^\ell).$$
In all other cases, the Nichols algebra is the free polynomial ring in one variable $\K[x]$.

Let us mention that the same happens for a trivial braiding over a field of characteristic $\ell$, that is, the $\ell$-th power of a derivation is again a derivation, because the binomial coefficients ${\ell\choose k}$ vanish for $0<k<\ell$. This lead to the notion of $\ell$-restricted Lie algebras, and after this Lusztig modeled the braided case in characteristic zero presented above.   
\end{example}
An equivalent definition of Nichols algebras can be given in terms of the quantum symmetrizer map, which roughly measures to which extend the action of the braid group $\mathbb{B}_n$ on $X^{\otimes n}$ does not factor to an action of the symmetric group $\mathbb{S}_n$, see \cite{Len21}~Section~2. 

Nichols algebras  appear naturally in every Hopf algebra, as the graded algebra generated by the primitive elements. The Andruskiewitsch-Schneider program \cite{AS10}, for a current state see the survey \cite{AG19}, uses this to aim at a classification of  all Hopf algebras with a fixed maximal cosemisimple part $C$. This was successful for example in the case where $C$ is an abelian group ring, and it turns out that in this case the Hopf algebra is completely determined by $C$, the Nichols algebra and certain deformations, by work of Andruskiewitsch, Schneider, Angiono, Garcia Iglesia and many others, see for example \cites{An13,AG11}.

In these cases, the braiding of $X$ is of \emph{diagonal type}, that is, there is a distinguished basis $x_i$ and the braiding is $x_i\otimes x_j\to q_{ij}(x_j\otimes x_i)$, here $q_{ij}=\sigma(\alpha_i,\alpha_j)$. Following Heckenberger, we depict such a braiding by writing a $q$-diagram with nodes for each $\alpha_i$, decorated by the self braiding $q_{ii}$, and edges from $\alpha_i$ to $\alpha_j$, decorated by the double braiding $q_{ij}q_{ji}$, and the edge is only drawn if $q_{ij}q_{ji}\neq 1$:
\begin{center}
\begin{tikzpicture}
		\draw (0,0)--(1.6,0);
		\draw (-0.1,0) circle[radius=0.1cm] node[anchor=south]{$ q_{11}$}
		(1.7,0) circle[radius=0.1cm] node[anchor=south]{$ q_{22}$};
		\draw (0.8,0) node[anchor=south]{$ q_{12}q_{21}$};
\end{tikzpicture} 
\end{center}
This does not fix the braiding completely, but it turns out to contain all relevant information. From the categorical view on $\Vect_\Gamma^{\omega,\sigma}$, it encodes the quadratic form and the associated bimultiplicative form, which is precisely the information invariant under equivalences of braided tensor categories.

The case of a quantum group was the main motivating example, and in this context Nichols algebras  already appear in Lusztig's work \cite{Lusz93} Chapter 1 as the algebra~$\mathfrak{f}$.

\begin{example}[Quantum Borel part]\label{exm_quantumBorelpart}
    Let $\g$ be a  finite-dimensional semisimple complex Lie algebra, with a choice of simple roots $\alpha_1,\cdots,\alpha_n$ and Killing form $(\alpha_i,\alpha_j)$. Let $\cC=\Vect_\Gamma^{1,\sigma}$ for $\Gamma=\Z^n$ with formal basis $\alpha_i$ and braiding $\sigma(\alpha_i,\alpha_j)=q^{(\alpha_i,\alpha_j)}$ for some $q\in\K^\times$. For example, we depict the $q$-diagrams for the cases $A_2$ and $B_2$:
\begin{center}
\begin{tikzpicture}
		\draw (0,0)--(1.6,0);
		\draw (-0.1,0) circle[radius=0.1cm] node[anchor=south]{$ q^2$}
		(1.7,0) circle[radius=0.1cm] node[anchor=south]{$ q^{2}$};
		\draw (0.9,0) node[anchor=south]{$ q^{-2}$};
\end{tikzpicture} 
\hspace{2cm}
\begin{tikzpicture}
		\draw (0,0)--(1.6,0);
		\draw (-0.1,0) circle[radius=0.1cm] node[anchor=south]{$ q^2$}
		(1.7,0) circle[radius=0.1cm] node[anchor=south]{$ q^4$};
		\draw (0.9,0) node[anchor=south]{$ q^{-4}$};
\end{tikzpicture} 
\end{center}

    Then the Nichols algebra $\NicholsOf(X)$ of the object $X=\bigoplus_i \K_{\alpha_i}$ is isomorphic to the positive part of the quantum group $U_q(\g)^+$ resp. the small quantum group $u_q(\g)^+$ if $q$ is a root of unity. That is, the  $q$-deformed Serre relations and the truncation relations for root vectors, which are both quote complicated, follow automatically and uniquely from the braiding on the $q$-deformed Cartan part and the necessities of having a Hopf algebra structure.  

    Altogether, the representations of $\NicholsOf(X)$ over $\Vect_\Gamma^{1,\sigma}$ for some quotient $\Gamma$ of $\Z^n$ correspond to representations of the Borel part of the quantum group $U_q(\g)^{\geq 0}$ resp. $u_q(\g)^{\geq 0}$.

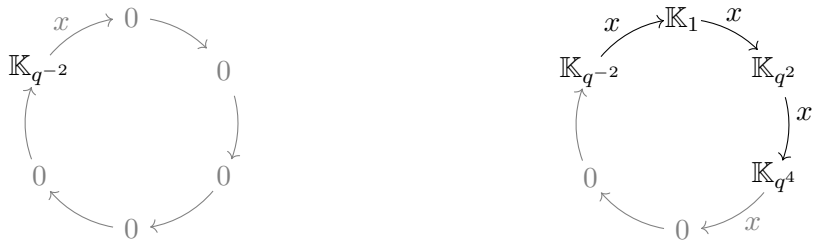
\begin{figure}[h]
\newcommand{\radiuscircle}{2cm}

\noindent
\begin{minipage}{.5\textwidth}
\centering
\begin{tikzpicture}[->,scale=.7]
   \node[gray] at (125:2.3cm) {$x$};

   \node[gray]  at (210:\radiuscircle) {$0$};
   \node at (150:\radiuscircle) {$\K_{q^{-2}}$};
   \node[gray] at (90:\radiuscircle)  {$0$};
   \node[gray] at (30:\radiuscircle) {$0$};
   \node[gray]  at (-30:\radiuscircle) {$0$};
   \node[gray]  at (-90:\radiuscircle) {$0$};

   \draw[gray] (200:\radiuscircle)  arc (200:160:\radiuscircle);
   \draw[gray] (140:\radiuscircle)  arc (140:100:\radiuscircle);
   \draw[gray] (80:\radiuscircle)  arc (80:45:\radiuscircle);
   \draw[gray] (15:\radiuscircle)  arc (15:-20:\radiuscircle);
   \draw[gray] (-40:\radiuscircle)  arc (-40:-80:\radiuscircle);
   \draw[gray] (-100:\radiuscircle)  arc (-100:-140:\radiuscircle);
\end{tikzpicture}
\end{minipage}%
\begin{minipage}{.5\textwidth}
\centering
\begin{tikzpicture}[->,scale=.7]
   \node at (125:2.3cm) {$x$};
    \node at (65:2.3cm) {$x$};
        \node at (5:2.3cm) {$x$};
                \node[gray] at (-55:2.3cm) {$x$};

   \node[gray]  at (210:\radiuscircle) {$0$};
   \node at (150:\radiuscircle) {$\K_{q^{-2}}$};
   \node at (90:\radiuscircle)  {$\K_{1}$};
   \node at (30:\radiuscircle) {$\K_{q^{2}}$};
   \node at (-30:\radiuscircle) {$\K_{q^4}$};
   \node[gray]  at (-90:\radiuscircle) {$0$};

   \draw[gray] (200:\radiuscircle)  arc (200:160:\radiuscircle);
   \draw (140:\radiuscircle)  arc (140:100:\radiuscircle);
   \draw (80:\radiuscircle)  arc (80:45:\radiuscircle);
   \draw (15:\radiuscircle)  arc (15:-20:\radiuscircle);
   \draw[gray] (-40:\radiuscircle)  arc (-40:-80:\radiuscircle);
   \draw[gray] (-100:\radiuscircle)  arc (-100:-140:\radiuscircle);
\end{tikzpicture}
\end{minipage}
\caption[Own creation with TikZ]{A simple and an indecomposable module over $\K[x]/x^p$ in $\Vect_\Gamma$, or equivalently $u_q(\sl_2)^{\geq0}$.}
\end{figure}
\end{example}

Heckenberger has in \cite{Heck09} classified the finite-dimensional Nichols algebras, over $C$ a finite abelian group ring. The reader is referred to the book \cite{HS20}, and for a detailed list of generators and relations to the survey \cite{AA17}. His key insight was that any such Nichols algebra comes with a generalized root system \cite{HY08}, which was then established in a much broader setting in \cite{AHS10}. A generalized root system can be defined as a set of hyperplanes through the origin, and all integrality conditions of Lie algebra root systems stay in place, but it is generalized in the sense that there is no underlying scalar product, so Weyl reflections are merely linear involutions and in particular they may connect differently shaped Weyl chambers. This includes the root systems of Lie superalgebras, where the occurrence of different Weyl chambers is already familiar. It is surprising that this definition is still very rigid: The finite generalized root systems have been classified in \cite{CH15}. Compared to the Lie algebra case there is only one additional series $D(n|m)$, familiar from Lie superalgebras, and $74$  exceptional root systems including $G_2,F_4,E_6,E_7,E_8$. For more details on generalized root systems, in particular the example in Figure \ref{fig_D21}, the reader is referred to \cite{FL22} Section 2.

\begin{figure}
	\begin{center}
		\includegraphics[scale=.3]{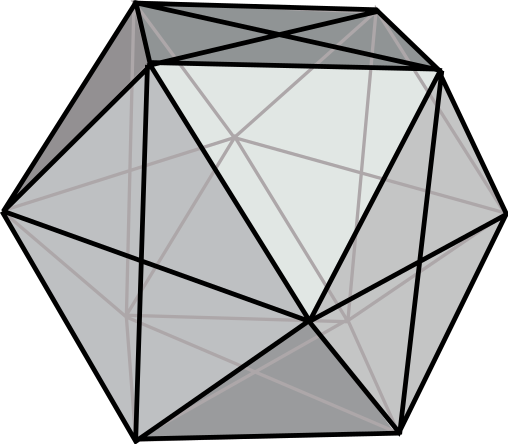}
		\hspace{1cm}
		\includegraphics[scale=.12]{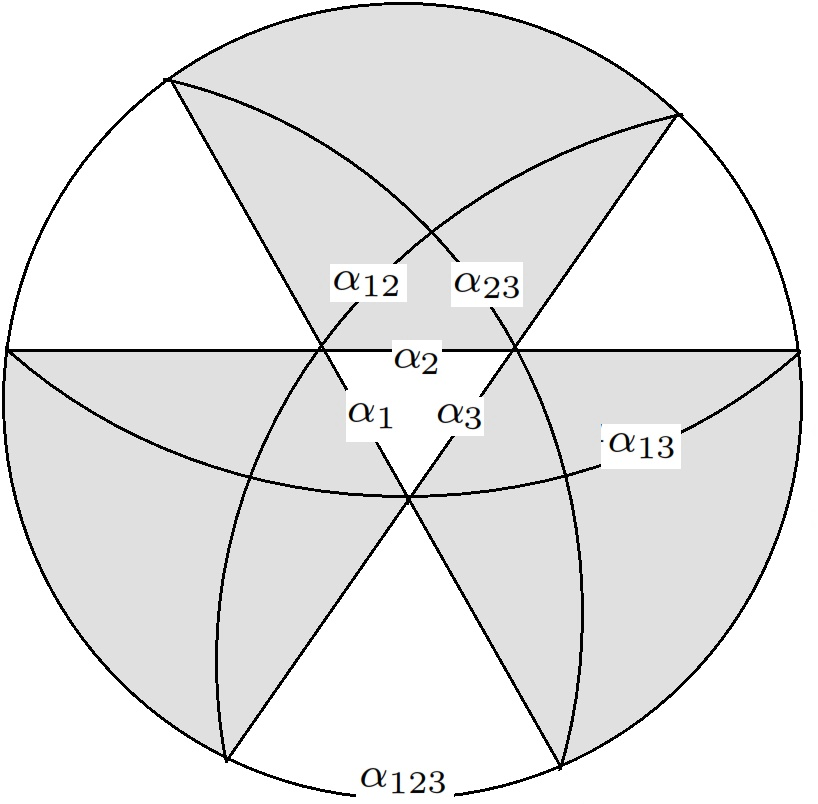}
	\end{center}
    \caption[Own creation with M. Cuntz, University of Hannover]{The root system $D(2,1)$ with $7$ positive roots and 
	$2$ different types of 	Weyl chamber}
    \label{fig_D21}
    \end{figure}

\bigskip

In \cites{Len12, Len14} I constructed large families of  examples of Nichols algebras over nonabelian groups, each from a given Nichols algebra over an abelian group in Heckenberger's list together with a diagram automorphism. For $A_2\cup A_2$ with diagram automorphism $\Z_2$, this reduces to the first example of a Nichols algebra over the dihedral group in \cite{MS00}.  The construction uses the physical idea of orbifolding and this connection will reappear below. Subsequently, Heckenberger and Vendramin find as part of their celebrated classification result \cite{HV17} that my examples indeed exhaust all families of finite dimensional Nichols algebras over nonabelian group except some exceptional cases in rank $2$ and $3$ (note that rank $1$ is still open).

\bigskip

In \cite{ALS23}, which is joint work with I. Angiono and G. Sanmarco, we used this to complete the Andruskiewitsch-Schneider program for nonabelian groups, in the sense that we constructed and classified all finite-dimensional Hopf algebras containing a nonabelian group ring as coradical. The main idea is that we can prove, by rather tedious group cohomology computations, that every Nichols algebra is twist equivalent to an orbifold above. Once this is achieved, the orbifold construction provides a superior alternative set of generators and relations, from the underlying Nichols algebra over the abelian group, and we can use established technology of my coauthors from the abelian group case to compute deformations (called liftings) and prove generation in degree one.

\bigskip

We now continue towards the categorical construction of the quantum group and its generalized versions: Given a finite tensor category $\cB$, there is a standard construction called \emph{Drinfeld center} $\cZ(\cB)$, which by definition consists of pairs of objects $X\in\cB$ and half-braidings $b_{X,Y}:X\otimes Y\to X\otimes Y$ for all $Y\in \cB$, such that in the second argument the hexagon identity holds. This produces a tensor category with a nondegenerate braiding. If~$\cB$ has a central subcategory $\cC$, that is, there are half-braidings $c_{X,Y}:X\otimes Y\to X\otimes Y$ for all $X\in\cB$, $Y\in\cC$, and in particular $\cC$ itself is braided, then there is the notion of a \emph{relative Drinfeld center} 
$\cZ_\cC(\cB)$, see \cites{Maj95,LW22}. It consisting of pairs of objects $X\in\cB$ and half-braidings $b_{X,Y}:X\otimes Y\to X\otimes Y$ for all $Y\in \cB$ such that $b_{X,Y}=c_{X,Y}$ for all $Y\in\cC$. In particular, it contains $\cC$ as a braided tensor category, but not the full center of $\cC$. If $\cB=\Rep(\Nichols)$ for a Hopf algebra $\Nichols\in\cC$, then the relative Drinfeld center can be written more explicitly as a category of $H$-Yetter-Drinfeld modules inside $\cC$ as defined in \cite{Besp95}. 
The adjoint of the functor forgetting the half-braiding gives an induction functor from $\cB$ and in particular from $\cC$ to the relative center. As surely intended  by most of the authors mentioned in this section, we have:
\begin{theorem}\label{thm_quantumgroupscat}
For the Nichols algebra $\Nichols=\NicholsOf(X)$ in $\Vect_\Gamma^{1,\sigma}$ in Example \ref{exm_quantumBorelpart}, we have an equivalence of nonsemisimple (modular) rigid braided tensor categories
$$\cZ_\cC(\Rep(\Nichols))\cong {^H_H}\mathcal{YD}(\cC) \cong \Rep(U_q(\g))\;\text{ resp. }\;\Rep(u_q(\g))$$
The induction functors mentioned above correspond in this case to the familiar induction from the Borel resp. Cartan part.
\end{theorem}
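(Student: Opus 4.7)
The plan is to chain together three known identifications: the relative Tannakian description of $\Rep(\Nichols)$, the Majid-type equivalence between the relative Drinfeld center and Yetter-Drinfeld modules, and the classical presentation of $U_q(\g)$ and $u_q(\g)$ as Drinfeld doubles of their Borel parts with identified Cartan torus. I would structure this so that Theorem \ref{thm_LM} does the reconstruction work and the category $\Vect_\Gamma^{1,\sigma}$ plays throughout the role of ``modules over the Cartan torus''.

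First, I would set up the Borel side. Since $\Gamma$ is abelian, $\cC=\Vect_\Gamma^{1,\sigma}$ is the category of $\K\Gamma$-modules with a quasi-triangular structure encoded by $\sigma$; concretely $\K\Gamma$ plays the role of the Cartan subalgebra $U_q(\mathfrak h)$ (respectively $u_q(\mathfrak h)$ after passing to a finite quotient). Applying Theorem \ref{thm_LM} to the forgetful functor $F:\Rep(\Nichols)(\cC)\to\cC$, we obtain an equivalence $\Rep(\Nichols)(\cC)\cong\Rep(\Nichols\rtimes\K\Gamma)$ of rigid tensor categories. By the computation sketched in Example \ref{exm_quantumBorelpart}, $\Nichols=\NicholsOf(X)$ with $X=\bigoplus_i\K_{\alpha_i}$ is presented by exactly the $q$-deformed Serre relations (and, in the root-of-unity case, the truncation of the PBW root vectors), so $\Nichols\rtimes\K\Gamma\cong U_q(\g)^{\geq 0}$ respectively $u_q(\g)^{\geq 0}$. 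This identifies $\Rep(\Nichols)(\cC)$ with the category of Borel representations.

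Next, I would pass to the center. For any Hopf algebra $\Nichols$ in a braided category $\cC$, the relative Drinfeld center $\cZ_\cC(\Rep(\Nichols)(\cC))$ is equivalent to the category of Yetter-Drinfeld modules ${}^\Nichols_\Nichols\mathcal{YD}(\cC)$ in the sense of \cite{Besp95}: a half-braiding $b_{M,-}$ that restricts to $c_{M,-}$ on $\cC$ is the same data as a $\Nichols$-coaction compatible with the $\Nichols$-action via the Yetter-Drinfeld condition internal to $\cC$. Combining this with the first step translates the theorem into the statement that Yetter-Drinfeld modules over $\Nichols\rtimes\K\Gamma$ whose $\K\Gamma$-coaction is given by the diagonal grading are the same as $U_q(\g)$- respectively $u_q(\g)$-representations. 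This is essentially the categorical form of the classical Drinfeld-Lusztig presentation of the quantum group as the double of its Borel part: the $\Nichols$-coaction in a Yetter-Drinfeld module supplies the action of the negative part $\Nichols^{*\mathrm{op}}\cong U_q(\g)^{-}$, while compatibility with the braiding of $\cC$ forces the two copies of the Cartan torus to be identified by the nondegenerate pairing encoded by $\sigma$.

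The main obstacle is the last identification: showing that the Yetter-Drinfeld category inside $\cC$ matches exactly $\Rep(U_q(\g))$ or $\Rep(u_q(\g))$, with no additional relations appearing on the $E_\alpha,F_\alpha$ generators and no spurious representations dropping out. For this one needs to (i) dualize $\Nichols$ inside $\cC$ and verify $\NicholsOf(X)^{*\mathrm{op}}\cong\NicholsOf(X^*)$ with $X^*\cong\bigoplus_i\K_{-\alpha_i}$ under the $\sigma$-induced pairing, so that the negative Borel is recovered, and (ii) verify that the mixed relations produced by the Yetter-Drinfeld condition translate into the $[E_\alpha,F_\alpha]$-relation $\sim(K_\alpha-K_\alpha^{-1})/(q-q^{-1})$. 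Point (i) is where Heckenberger's theory of generalized root systems and the rigidity/self-duality of Nichols algebras of diagonal type enter, and point (ii) is a direct but lengthy comparison of structure constants using the nondegeneracy of the bimultiplicative form $\sigma$. Once these are in place, the induction functor from $\cC$ to the relative center corresponds under the equivalence to tensoring up from the Cartan torus to the full quantum group, which gives the final sentence of the theorem; and the modular/nondegeneracy statement follows because the bimultiplicative form on $\Gamma$ coming from $(\alpha_i,\alpha_j)$ is nondegenerate on the quotient under consideration.
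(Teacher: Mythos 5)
The paper states this theorem without supplying a proof, deferring to \cite{CLR23} Section 6 and to the lineage of authors cited earlier in the section; your sketch is the standard argument those references carry out, and it is essentially correct in structure. The chain you describe --- bosonize to identify $\Rep(\Nichols)(\cC)$ with Borel representations, pass through the Bespalov identification of the relative center with Yetter--Drinfeld modules in $\cC$, then match the YD category with the Drinfeld double presentation of $U_q(\g)$ using self-duality $\NicholsOf(X)^{*\mathrm{op}}\cong\NicholsOf(X^*)$ to recover the negative Borel and the nondegenerate $\sigma$-pairing to force the single Cartan copy --- is exactly the expected route, and you correctly locate the two places where the real work lies.

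One technical mislabeling worth fixing: the step $\Rep(\Nichols)(\cC)\cong\Rep(\Nichols\rtimes\K\Gamma)$ is not an application of Theorem~\ref{thm_LM}. That theorem goes the other way: given a fibre functor $F:\cB\to\cC$ that is the identity on $\cC$, it \emph{produces} a Hopf algebra $\Nichols$ in $\cC$ with $\cB\cong\Rep(\Nichols)(\cC)$; applied to your forgetful functor it would just hand back $\Nichols$ and not the bosonization. What you actually need is the Radford biproduct statement, which the paper records in the paragraph immediately preceding Theorem~\ref{thm_LM}: when $\cC=\Rep(L)$ for a quasitriangular Hopf algebra $L$, modules over $\Nichols$ internal to $\cC$ are precisely modules over the smash product $\Nichols\#L=\Nichols\rtimes L$. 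Citing that instead makes the first step clean. A second, smaller caveat is that in the generic-$q$ case $\Nichols$ is infinite-dimensional, so the dual in step (i) must be taken in the graded (restricted) sense; and the resulting equivalence identifies $\Rep(U_q(\g))$ with suitably graded (weight) modules, which is implicit in working inside $\Vect_{\Z^n}^{1,\sigma}$ but should be flagged. Neither point affects the validity of the overall strategy.
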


\newcommand{\radiuscircle}{2cm}
\newcommand{\radiuscircleB}{2.4cm}
\newcommand{\radiuscircleA}{2.2cm}

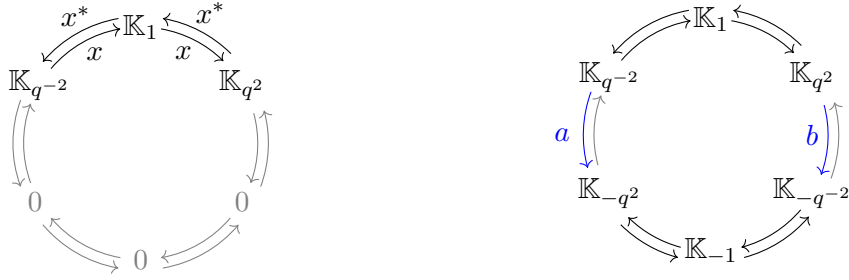
\begin{figure}
\begin{center}
\begin{minipage}{.5\textwidth}
\centering
\begin{tikzpicture}[->,scale=.7]
    \node at (117:2.7cm) {$x^*$};
    \node at (62:2.8cm) {$x^*$};
     \node at (117:1.9cm) {$x$};
    \node at (65:1.9cm) {$x$};

   \node[gray]  at (210:\radiuscircleA) {$0\;$};
   \node at (150:\radiuscircleA) {$\K_{q^{-2}}$};
   \node at (90:\radiuscircleA)  {$\K_{1}$};
   \node at (30:\radiuscircleA) {$\K_{q^{2}}$};
   \node[gray] at (-30:\radiuscircleA) {$0$};
   \node[gray]  at (-90:\radiuscircleA) {$0$};

   \draw[gray] (200:\radiuscircleA)  arc (200:160:\radiuscircleA);
    \draw[gray] (160:\radiuscircleB)  arc (160:200:\radiuscircleB);
   \draw (140:\radiuscircleA)  arc (140:100:\radiuscircleA);
    \draw (100:\radiuscircleB)  arc (100:140:\radiuscircleB);
   \draw (80:\radiuscircleA)  arc (80:45:\radiuscircleA);
    \draw (45:\radiuscircleB)  arc (45:80:\radiuscircleB);
   \draw[gray] (15:\radiuscircleA)  arc (15:-20:\radiuscircleA);
    \draw[gray] (-20:\radiuscircleB)  arc (-20:15:\radiuscircleB);
   \draw[gray] (-40:\radiuscircleA)  arc (-40:-80:\radiuscircleA);
      \draw[gray] (-80:\radiuscircleB)  arc (-80:-40:\radiuscircleB);
   \draw[gray] (-100:\radiuscircleA)  arc (-100:-140:\radiuscircleA);
      \draw[gray] (-140:\radiuscircleB)  arc (-140:-100:\radiuscircleB);

\end{tikzpicture}
\end{minipage}%
\begin{minipage}{.5\textwidth}
\centering
\begin{tikzpicture}[->,scale=.7]
    %
    \node[blue] at (180:2.8cm) {$a$};
     \node[blue] at (0:1.9cm) {$b$};

   \node  at (210:\radiuscircleA) {$\K_{-q^{2}}$};
   \node at (150:\radiuscircleA) {$\K_{q^{-2}}$};
   \node at (90:\radiuscircleA)  {$\K_{1}$};
   \node at (30:\radiuscircleA) {$\K_{q^{2}}$};
   \node at (-30:\radiuscircleA) {$\K_{-q^{-2}}$};
   \node  at (-90:\radiuscircleA) {$\K_{-1}$};

   \draw[gray] (195:\radiuscircleA)  arc (195:160:\radiuscircleA);
    \draw[blue] (160:\radiuscircleB)  arc (160:195:\radiuscircleB);
   \draw (140:\radiuscircleA)  arc (140:100:\radiuscircleA);
    \draw (100:\radiuscircleB)  arc (100:140:\radiuscircleB);
   \draw (80:\radiuscircleA)  arc (80:45:\radiuscircleA);
    \draw (45:\radiuscircleB)  arc (45:80:\radiuscircleB);
   \draw[blue] (15:\radiuscircleA)  arc (15:-20:\radiuscircleA);
    \draw[gray] (-20:\radiuscircleB)  arc (-20:15:\radiuscircleB);
   \draw (-40:\radiuscircleA)  arc (-40:-75:\radiuscircleA);
      \draw (-75:\radiuscircleB)  arc (-75:-40:\radiuscircleB);
   \draw (-105:\radiuscircleA)  arc (-105:-135:\radiuscircleA);
      \draw (-135:\radiuscircleB)  arc (-135:-105:\radiuscircleB);

\end{tikzpicture}
\end{minipage}%
\end{center}
\caption[Own creation with TikZ]{A simple Yetter-Drinfeld module over $\K[x]/x^p$ in $\Vect_\Gamma$, corresponding to the standard representation of $u_q(\sl_2)$, and the indecomposable extensions with parameters $(a:b)$ for $q^{6}=-1$.}
\end{figure}

The reader interested in details and an explicit translation to quantum groups in the general setup may consult \cite{CLR23} Section 6. One of the powers of this approach is that it covers several variants of quantum groups appearing in literature, typically by slightly different choices of $\cC$. For example, the \emph{simply connected quantum group} corresponds to choosing $\Gamma$ to be a quotient of the weight lattice instead of  the root lattice, and the \emph{unrolled quantum group} in \cite{CGP15} corresponds to the choice of $\Rep(\mathfrak{h})$ for the abelian Cartan algebra $\mathfrak{h}\subset \g$ instead of representations $\Vect_\Gamma$ of the exponentiated Cartan group. 

\enlargethispage{1cm}
More severely, if $q$ has an even order (which is relevant in the quantum field theory application below), then it was a notorious issue that $u_q(\g)$ does not always admit a braiding, let alone a nondegenerate braiding, see \cite{KS11}.  In \cite{CGR20} this was resolved by introducing a quasi-Hopf algebra version $\tilde{u}_q(\sl_2)$. The approach now presented in Theorem~\ref{thm_quantumgroupscat} produces without further work a  braided tensor category $\Rep(\tilde{u}_q(\g))$, based on a Nichols algebra over a Cartan part $\Vect_\Gamma^{\omega,\sigma}$ with an associator $\omega$, which is for an even group $\Gamma$ necessary in order to have a nondegenerate braiding on $\Vect_\Gamma$. In \cite{GLO18} we have previously worked out a corresponding quasi-Hopf algebra $\tilde{u}_q(\g)$ for arbitrary $\g$. In \cites{AG03, Neg21} a very different construction is done coming from Lusztig's quantum group of divided powers.
\footnote{Technically, it is nowhere spelled out that these three approaches give equivalent results. This would be a nice thesis project.}



\section{Vertex algebras and the logarithmic Kazhdan-Lusztig conjecture}

A \emph{vertex operator algebra} $\V$ is, very roughly speaking, a commutative algebra depending analytically on a coordinate $z$ in the punctured disc $\C^\times$. More precisely, it is a graded vector space with a multiplication map
\begin{align*}
    \Y:\; \V\otimes \V &\to \V[[z,z^{-1}]] \\
    a\otimes b &\mapsto \Y(a,z)b 
\end{align*}
 taking values in Laurent series in a variable $z$ with coefficients in $\V$. The axioms of a vertex operator algebra include a version of commutativity or locality, which relates~$\Y(a,z)\Y(b,w)$ and $\Y(b,w)\Y(a,z)$ for $z,\;w,\;z-w\neq 0$. As an implication, one also has a version of associativity, which relates these two expressions to $\Y(\Y(a,z-w)b,w)$. 
An additional axiom requires that conformal transformations of the variable $z$ in $\Y(a,z)$ are compatible with an action of the Virasoro algebra on $\V$, which is part of the data. Standard mathematical textbooks on vertex operator algebras include \cites{Kac97,FBZ04}. I~was exposed to these topics by M. Schottenloher in Munich \cite{Schot08}, who supervised my diploma thesis on the construction of vertex algebras from Hopf algebra data \cite{Len07}, as well as my PhD thesis.

Similarly, there is a notion of a \emph{vertex algebra module}
$$\Y_\cM:\; \V\otimes \V \to \V[[z,z^{-1}]],$$
and accordingly a category $\Rep(\V)$ consisting of a suitable type of modules (e.g. $C_1$-cofinite). Moreover, for any three modules $\cM,\cN,\cL$ there is the notion of an \emph{intertwining operator} 
$$\Y_{\cL \choose \cM\;\cN}:\; \cM\otimes \cN \to \cL\{z\}[\log(z)],$$
which now depends on $z$ as a multivalued function with a regular singularity (hence it can be expanded in terms of arbitrary complex powers of $z$ and logarithms). Algebraically, these intertwining operators should be viewed as analogs of $V$-balanced bilinear maps between modules $M,N,L$ over a commutative ring $V$. As suggested by this comparison, one can define the \emph{tensor product} $\cM\otimes_\V \cN$ of $\V$-modules as the universal object that has such an intertwining operator
$$\Y_{\cM,\,\cN}:\; \cM\otimes \cN \to (\cM\otimes_\V \cN)\{z\}[\log(z)].$$
This was anticipated in the physics and analysis context in \cites{MS88,FF88, Verl88, Gab94}, appeared prominently for affine Lie algebras in \cite{KL93}, and was made rigorous in the vertex algebra settings in \cite{HL94} and \cite{HLZ06}. 
The tensor unit is the vertex algebra $\V$ itself, and the intertwining operator witnessing the isomorphisms $\V\otimes_\V\cM\cong \cM$ is precisely the action $\Y_\cM$, as for commutative algebras. We then encounter a major difference to the commutative algebra paradigm: An intertwining operator from $\cM,\cN$ does naturally induce an intertwining operator from  $\cN,\cM$, but the variable has to be replaced by $-z$. Since the involved functions are multivalued, this has to be done by analytic continuation,  counterclockwise around $z=0$, and doing this twice will not give the identity in general. Hence our category of representations comes with a non-symmetric braiding.

\begin{theorem}[\cite{HLZ06}]
For a $C_2$-cofinite vertex algebra $\V$, the category of $C_2$-cofinite modules is a braided tensor category. 
\end{theorem}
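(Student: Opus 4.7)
The plan is to establish the braided monoidal structure on the category of $C_2$-cofinite $\V$-modules by following the Huang--Lepowsky--Zhang program, whose key technical engine is the convergence and extension of products of intertwining operators. First I would fix the ambient category precisely --- grading-restricted generalized $\V$-modules satisfying $C_2$-cofiniteness --- and verify the purely algebraic closure properties (closure under subquotients, finite direct sums, contragredient duals) that make the category abelian and rigid. These are direct consequences of the $C_2$-cofiniteness hypothesis on $\V$, which propagates to yield finite length for objects and a version of the Zhu-algebra machinery.

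The analytic core of the argument is to prove that, for any four $C_2$-cofinite modules $\cM_1,\cM_2,\cM_3,\cM_4$, the formal matrix coefficients built from compositions of two intertwining operators $\Y_1, \Y_2$ applied to vectors $v_i \in \cM_i$ satisfy a system of linear ODEs with regular singular points in each formal variable $z_1, z_2$. The $C_2$-cofiniteness of $\V$ is exactly what produces a finite-rank module over a polynomial ring in the relevant variables, and hence yields such ODEs by elimination. Standard ODE theory then converts the formal series into honest holomorphic functions on the configuration space $\{|z_1|>|z_2|>0\}$ (and its analogues) with prescribed monodromy around the singularities $z_i=0$ and $z_1=z_2$. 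This is the main obstacle: without $C_2$-cofiniteness no such convergence or analytic continuation would hold, and the construction of the tensor product would not even get off the ground.

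Given convergence, I would construct $\cM \otimes_\V \cN$ through its universal property with respect to intertwining operators of type $\binom{-}{\cM\;\cN}$, equivalently via the $P(z)$-construction of HLZ, where $P(z)$ denotes the Riemann sphere with marked points $0,z,\infty$. The associativity isomorphism comes from noting that both $(\cM_1 \otimes_\V \cM_2) \otimes_\V \cM_3$ and $\cM_1 \otimes_\V (\cM_2 \otimes_\V \cM_3)$ represent the functor of pairs of composable intertwining operators; the identification of these two functors is implemented by the analytic continuation of four-point functions between the domains $|z_1|>|z_2|$ and $|z_2|>|z_1-z_2|$, which is legitimate precisely because of the previous step. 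The pentagon axiom reduces to a consistency statement for five-point functions, again on the level of analytic continuations, and the unit constraint $\V\otimes_\V\cM\cong \cM$ is witnessed by the module action $\Y_\cM$ as noted in the excerpt.

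Finally, the braiding $c_{\cM,\cN}:\cM\otimes_\V\cN \to \cN\otimes_\V\cM$ is defined, as indicated in the text, by swapping the two tensor factors and simultaneously transporting $z$ along a counterclockwise half-loop around the origin, using the multivaluedness of the intertwining operator; the non-triviality of the double braiding is then visible from the nontrivial monodromy. The hexagon identities reduce to coherence statements for certain standard paths in the configuration spaces of three and four ordered points on $\C^\times$, which follow tautologically once the associativity, unit, and braiding morphisms have all been defined via the same universal intertwining data. Throughout, the only non-formal input is the analytic step governed by $C_2$-cofiniteness; everything else is a careful bookkeeping of universal properties and analytic continuations.
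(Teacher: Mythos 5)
The paper does not prove this result --- it is cited to Huang--Lepowsky--Zhang (with the C$_2$-cofiniteness verification of the HLZ hypotheses being due to Huang in a companion paper), so there is no internal proof to compare against. Your sketch is a reasonable account of the HLZ/Huang program: the identification of the tensor product as a universal recipient of intertwining operators, the convergence of iterated products via finite-rank ODE systems with regular singular points (which is exactly where $C_2$-cofiniteness enters), associativity via analytic continuation between the domains $|z_1|>|z_2|$ and $|z_2|>|z_1-z_2|$, and the braiding via half-monodromy. That is essentially the correct skeleton.

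However, there is one genuine error: you claim that closure under contragredient duals ``make[s] the category abelian and \emph{rigid}.'' Rigidity does \emph{not} follow from $C_2$-cofiniteness and is not part of the HLZ theorem; it is a notoriously delicate open issue for logarithmic theories. The paper itself flags this a few lines after the statement you are proving: ``There is a natural notion of a dual, the contragradient module $\cM^*$, but frequently it does not fulfill the axioms of rigidity,'' and it goes on to explain that in general one only obtains a Grothendieck--Verdier duality structure, where the dualizing object need not be the unit and the tensor product need only be right-exact. Having contragredients available is not the same as having left and right duals with coherent evaluation and coevaluation morphisms satisfying the zig-zag identities; the latter requires substantial additional input (e.g.\ Huang's rigidity theorem in the rational case, or the much more recent McRae-type criteria in the logarithmic setting). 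You should remove the word ``rigid'' from the list of structures you are establishing, or make clear that you are only asserting existence of contragredients in the sense of a Grothendieck--Verdier category. The rest of your outline, including the ODE argument, the construction of the associator via parallel transport, the unit isomorphism through $\Y_\cM$, and the braiding through counterclockwise half-monodromy, matches what the cited references actually do.
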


There is a natural notion of a dual, the \emph{contragradient} module $\cM^*$, but frequently it does not fulfill the axioms of rigidity. For example, the tensor unit may not be self-dual and the tensor product may not be exact, merely right-exact.

Note that in \cite{ALSW21}, which is joint work with R. Allen, C. Schweigert and S. Wood, we have shown that the contragradient dual fulfills the axioms of a \emph{Grothendieck-Verdier} category in the sense of \cite{BD13}. Technically, this type of duality is defined by being right adjoint to the tensor product, so it is more or less equivalent to having a right-exact tensor product, and it still provides a notion of inner Homs. However, the duality is not monoidal, so in a sense there are two tensor products with equal rights. Again, this notion already appears for commutative algebras, where tensoring over the algebra is typically only right-exact and the algebra as a module over itself is typically not self-dual. A first example (where exactness still holds) is the lattice vertex algebra, if the conformal structure is shifted by a background charge, and this will become relevant later.

\bigskip

Vertex algebras are motivated by physics, where they describe the holomorphic (chiral) part of a 2-dimensional quantum field theory with conformal symmetry. More precisely, to any Riemann surface $\Sigma$ with punctures decorated by vertex algebra modules there is a notion of \emph{conformal block} $\cZ(\Sigma)$, the solution of a system of differential equations, as in the introductory example \eqref{exm_KZequation}. This space carries an action of the group of diffeomorphisms of $\Sigma$, up to homotopies fixing the boundaries, which is the mapping class group. 

The space of intertwining operators corresponds to the space of conformal blocks associated to a sphere with three punctures, and the mapping class group is the braid group $\B_2$, which acts by the braiding of the category. The space of conformal blocks of the torus contains, roughly by glueing, the graded dimensions $\sum_{n\geq 0} \dim(\cM_n)t^n$ of vertex algebra modules. Here $t=e^{\i\pi\tau}$ is the dependency on the choice of complex structure on the torus,   which we may realize by $\C/(\Z+\tau\Z)$ for $\tau$ in the upper halfplane. The mapping class group of the torus is the modular group $\mathrm{SL}_2(\Z)$, which acts on the set of possible complex structures parametrized by $\tau$ as Möbius transformations. In turn, the graded dimensions of the modules transform as a vector-valued modular form, and the transformation behavior is visible on the category side. In the nonsemisimple case, this is more involved, because the set of traces does not span the conformal block, similar as the set of characters of a group (or Hopf algebra) does not span the space of class functions in nonsemisimple settings.  

We continue by first examples of vertex algebras:

\begin{example}
The \emph{Heisenberg vertex algebra} is, as graded vector space, a formal polynomial ring in countably many variables, which we  here denote in physics notation
$$\cH=\C[\partial\varphi,\partial^2\varphi,\ldots].$$
Similarly, we have the $n$-dimensional Heisenberg algebra $\cH^{\otimes n}$ with generators $\partial^k\varphi_\lambda$. The multiplication has the following distinct singular term, plus many other regular terms forced by the compatibility axioms
$$\Y:\;\partial\varphi_\lambda\otimes\partial\varphi_\mu\mapsto (\lambda,\mu)z^{-2}1+\cdots$$
for $(\lambda,\mu)$ the standard inner product of $\R^n$.
There are vertex algebra modules $\cH_\lambda=\cH^{\otimes n}e^{\lambda\varphi}$ for every $\lambda\in \R^n$, each generated by a formal symbols $e^{\lambda\varphi}$, on which the action has the following distinct singular term
$$\Y_{\cH_\mu}:\;\partial\varphi_\lambda\otimes e^{\mu\varphi}\mapsto (\lambda,\mu) z^{-1}e^{\mu\varphi}+\cdots$$
The space of intertwining operators between $\cH_\lambda,\cH_\mu$ and $\cH_\nu$ is zero unless $\lambda+\mu=\nu$, in which case it is $1$-dimensional and the distinct term is
$$\Y_{\cH_\lambda\;\cH_\mu \choose \cH_{\lambda+\mu}}:\;e^{\lambda\varphi}\otimes e^{\mu\varphi}\mapsto z^{(\lambda,\mu)}e^{(\lambda+\mu)\varphi}+\cdots$$
As a consequence, the tensor product is ${\cH_\lambda\otimes_\V \cH_\mu}=\cH_{\lambda+\mu}$ and braiding is $e^{\i\pi(\lambda,\mu)}$ due to the multivaluedness of $z^{(\lambda,\mu)}$. To summarize, we have an equivalence of braided tensor categories 
$$\Rep(\cH^{\otimes n})\cong \Vect_{\R^n}^{1,\sigma},\qquad \sigma(\lambda,\mu)=e^{\i\pi(\lambda,\mu)}$$
The graded dimension of any $\cH_\lambda$ is related to the Dedekind eta function. 

Physically, this vertex algebra is the chiral part of a free scalar bosonic field \text{$\varphi:\Sigma\to \R^n$}. Classically, such a field would be a solution to the wave equation $\Box\varphi(x,t)=0$. In quantum field theory, in the Feynman path integral approach, $\varphi$ can be regarded as a random field.  The probability (actually: amplitude) of a particular $\varphi$  is hereby determined by a Lagrangian action functional $\int_\Sigma |\nabla \varphi|^2$. The minimum is obtained for those $\varphi$ that solve the Euler-Langrange equation, which recovers the classical equation. 
The intertwining operators are literally the correlation functions of quantities depending on $\varphi$, for example differential polynomials, evaluated at three points $0,z,\infty$ of the sphere.
\end{example}
\begin{example}
For any choice of an even integral lattice $\Lambda\subset \R^n$, the Heisenberg vertex algebra can be extended by a commutative algebra to the \emph{lattice vertex algebra} $\V_\Lambda$, as discussed later in Example \ref{exm_latticeVOA}. The corresponding braided tensor category follows from this construction:
$$\Rep(\V_\Lambda)\cong \Vect_{\Lambda^*/\Lambda}^{\bar{\omega},\bar{\sigma}},$$
where $\bar{\omega},\bar{\sigma}$ are obtained from $(1,e^{\i\pi(\lambda,\mu)})$ by picking coset representatives, as discussed in Example \ref{exm_latticeVOA}.
The graded dimensions are Jacobi theta functions. Physically, this is a free scalar bosonic field compactified on an $n$-torus.
\end{example}
\begin{example}\label{exm_affineg}
Let $\hat{\g}_\kappa$ be an affine Lie algebra, constructed from a finite-dimensional semisimple complex Lie algebra $\g$ by taking $\g\otimes \C[t,t^{-1}]$ and then a central extension, parametrized by the level $\kappa$. If we denote $x_n=x\otimes t^n$ for $x\in\g,\,n\in\Z$, then we can turn suitable modules of $\hat{\g}$ into vertex algebra modules by setting 
$$\Y:\;x_{-1}.1\otimes v\mapsto \sum_{n\in\Z} (x_n.v) z^{-n-1}.$$
For example, for generic levels $\kappa$ we can obtain a vertex algebra structure on the irreducible Verma module $\V^\kappa(\g)$. Then the category of representations of $\V^\kappa(\g)$ is equivalent to representations of the quantum group $U_q(\g)$, this is known as \emph{Kazhdan-Lusztig correspondence} \cite{KL93}. In some sense, this is the conformal field theory picture behind the introductory example \eqref{exm_KZequation}. 
\end{example}
Note that this method of first defining a Lie algebra of \emph{mode operators $x_n$} and then in hindsight from this a vertex algebra is very effective. For example, the Heisenberg vertex algebra in the previous example can be constructed in a similar way from the Heisenberg Lie algebra, with generators $a_n,n\in\Z$ and relations $[a_n,a_m]=n\delta_{n+m=0}$.

\begin{example}[Orbifold]\label{exm_VOAGOrbifoldA}
For a finite group $G$ acting on a vertex algebra $\V$ we take $\cW=\V^G$ the vertex subalgebra fixed by $G$. Restricting $\V$-modules produces $\V^G$-modules, but not all of them. We can in additional consider over $\V$ twisted modules involving multivalued functions \cites{FLM88,DLM96}: A module $\cM\in\Rep_\cW(\V)$ is called \emph{$g$-twisted} if 
$$\Y_\cM(v,e^{2\pi\i}z)=\Y_\cM(gv,z).$$
In particular, the restriction to $\V^G$ is single-valued.
The categories of $g$-twisted modules supposedly form a $G$-crossed braided tensor category \cites{ENOM10, DN21} 
$$\bigoplus_{g\in G}  \cC_g,$$
extending the category of local modules $\cC_1=\Rep(\V)$. The decomposition behavior in the category of $\cW$-modules is given, rather easily, by $G$-equivariantization, following a Schur-Weyl type argument, and can be proven in good cases to produce all modules over $\V^G$.
\end{example}

Now I want to explain my program to construct vertex algebras, whose category of representations is given by quantum groups in the generalized sense explained in Section~\ref{sec_quantumgroups}. The main ingredients on the vertex algebra side are screening operators, which appear in abundance, maybe since the work of \cite{DF84}. Local screening operators facilitate actions of Lie algebras on vertex algebras, for example $\g$ acts on $\V^\kappa(\g)$ by screening operators. More complicated are non-local screening operators that involve multivalued functions:

\begin{definition}
Let $\V=\cH^{\otimes n}$ be the $n$-dimensional Heisenberg vertex algebra. For any $\alpha\in \R^n$, we define the \emph{screening operator} on any module $\cM$ 
$$\zem_\alpha:\;\cM\to \overline{\cH_\alpha\otimes_\V\cM},$$
by taking the intertwining operators $\Y_{\cH_\alpha,\cM}$, which is part of the definition of the tensor product $\cH_\alpha\otimes_\V\cM$, applying it to $e^\alpha\otimes m$ and integrating this as a multivalued function over  the unit circle, lifted to a path in the multivalued covering. Since this integral is nonzero not only for the power $z^{-1}$ as usual, but also for all noninteger powers $z^m$, the result is in the algebraic closure of $\cH_\alpha\otimes_\V\cM$ as a graded vector space.
\end{definition}
Note that, as defined, non-local screening operators acting on an arbitrary module have in general bad compatibility with the grading, the Virasoro action etc. Suitable powers are well behaved and appear in the Felder complex \cite{Fel89}, from the perspective of multivalued functions and cycles this is explained for example in \cite{TW13}. In view of the following results, these results correspond to the case $\sl_2$.

\bigskip

In \cite{Len21} my main result is proving that screening operators are described by Nichols algebras. This was an expectation of B. Feigin et. al. \cites{FF92,FGST05, FT10, AM14} in the case of quantum groups and of A. Semikhatov and I. Tipunin \cite{ST12} in the context of diagonal Nichols algebras, and they exposed me to these questions during my research stay in Moscow (2015-2016). 

\newcommand{\Fp}{\mathrm{F}}
\begin{theorem}\label{thm_screenings}
Let $\alpha_1,\ldots,\alpha_n\in\R^n$. Assume a technical condition that ensures convergence I call \emph{subpolarity} in \cite{Len21} Sec. 5.2; it holds for example if $(\alpha_i,\alpha_j)$ is a positive definite matrix with diagonal entries less then $1$. Then the screening operators can be composed (convergently) and fulfill the relations of the corresponding Nichols algebra 
$$\NicholsOf(X),\qquad X=\C_{\alpha_1}\oplus\cdots\oplus \C_{\alpha_n}.$$
\end{theorem}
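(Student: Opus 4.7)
The plan is to first make sense of the composition of screenings as an absolutely convergent iterated contour integral, then reduce the theorem to a vanishing statement for certain symmetrized integration cycles, and finally identify that vanishing with the kernel of the quantum symmetrizer which defines the Nichols algebra.

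Beginning with convergence, I would unfold $\zem_{\alpha_{i_1}} \circ \cdots \circ \zem_{\alpha_{i_n}}$ into an iterated integral over nested circles $|z_1| > \cdots > |z_n|$ in $\C^\times$ of the multivalued master integrand $\prod_{j<k}(z_j - z_k)^{(\alpha_{i_j}, \alpha_{i_k})}$ multiplied by the regular kernel produced by the normal-ordered intertwining operators $\Y_{\cH_{\alpha_{i_j}}, -}(e^{\alpha_{i_j}\varphi}, z_j)$, lifted to the universal cover of the configuration space so that the integrand becomes single-valued. The subpolarity hypothesis is chosen precisely so that each coincidence $z_j = z_k$ yields only a locally integrable singularity; hence the iterated integral converges absolutely, and the same convergence persists under any cycle deformation avoiding simultaneous triple coalescences.

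Next, I would invoke the standard characterization $\NicholsOf(X) = \bigoplus_n X^{\otimes n}/\ker \mathfrak{S}_n$, where $\mathfrak{S}_n = \sum_{w \in \S_n} T_w$ is built from Matsumoto lifts of $\S_n$ to the braid group $\B_n$, acting on $X^{\otimes n}$ via the diagonal braiding $\sigma(\alpha_i, \alpha_j) = e^{\i\pi(\alpha_i, \alpha_j)}$ of $\Vect_{\R^n}^{1, \sigma}$. Extending $x_{i_1} \otimes \cdots \otimes x_{i_n} \mapsto \zem_{\alpha_{i_1}} \circ \cdots \circ \zem_{\alpha_{i_n}}$ linearly to a map $v \mapsto \zem_v$, the theorem is equivalent to the implication $\mathfrak{S}_n(v) = 0 \Longrightarrow \zem_v = 0$, i.e.\ the screening map factors through the image of $\mathfrak{S}_n$ in each degree.

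The heart of the proof is to identify the two $\B_n$-actions in sight: the algebraic one on $X^{\otimes n}$ via the diagonal braiding, and the topological one on the space of integration cycles via deformation on the universal cover. Moving the $z_j$-contour around the $z_{j+1}$-contour produces precisely the monodromy factor $e^{\i\pi(\alpha_{i_j}, \alpha_{i_{j+1}})} = \sigma(\alpha_{i_j}, \alpha_{i_{j+1}})$, so the two actions intertwine under $v \mapsto \zem_v$, and therefore $\zem_{\mathfrak{S}_n(v)}$ is the integral of the master function over the fully symmetrized cycle obtained from the original nested contour. The hard part will be to prove that this symmetrized cycle is a boundary in the twisted homology of the punctured configuration space with the local system defined by the master function, so that its integral vanishes by Stokes, which is the higher-rank analogue of Felder's $\sl_2$-vanishing used implicitly in \cite{Fel89, TW13}. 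This requires (i) choosing explicit bounding chains staying inside the absolute-convergence regime afforded by subpolarity, (ii) ruling out boundary contributions at the singular loci $z_j = z_k$, and (iii) matching the topological cycle-symmetrization with the algebraic quantum symmetrizer on the level of twisted de Rham/homology pairing. Once this geometric vanishing is in hand, $\mathfrak{S}_n(v) = 0$ forces $\zem_v = 0$, so the compositions fulfill all defining relations of $\NicholsOf(X)$ and the theorem follows.
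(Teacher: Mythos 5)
Your framework overlaps with the paper's at the outset: both use subpolarity for absolute convergence of the iterated contour integrals, both reduce the theorem to showing that the assignment $v\mapsto\zem_v$ on $X^{\otimes n}$ annihilates $\ker\mathfrak{S}_n$ (the defining relations of $\NicholsOf(X)$), and both observe that the braid group acts compatibly on $X^{\otimes n}$ via the diagonal braiding and on integration contours via monodromy of the master function. The ``hard part'' you then propose, however, is not what the paper does, and as written it contains a direction error.

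You correctly state the goal as $\mathfrak{S}_n(v)=0\Rightarrow\zem_v=0$, but your argument aims instead at showing $\zem_{\mathfrak{S}_n(v)}=0$, namely that the integral over the ``fully symmetrized cycle'' vanishes because that cycle bounds in twisted homology. These are different claims. The second says $\zem$ annihilates the \emph{image} of $\mathfrak{S}_n$ — which is false (compositions of screenings attached to nonzero Nichols-algebra monomials are genuinely nonzero), and in any case is not the theorem; the theorem needs $\zem$ to annihilate the \emph{kernel} of $\mathfrak{S}_n$. The rank-one case at a primitive $\ell$-th root of unity makes this concrete: $\mathfrak{S}_\ell(x^{\otimes\ell})$ vanishes there because the $q$-factorial vanishes, so $\zem_{\mathfrak{S}_\ell(x^{\otimes\ell})}=\zem_0=0$ is a tautology; what must be proved is $\zem_{x^{\otimes\ell}}=\zem_\alpha^{\;\ell}=0$, and for that you need to express $\zem_v$ \emph{itself} through $\mathfrak{S}_n$, not feed $\mathfrak{S}_n(v)$ into $\zem$.

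The paper supplies exactly such an expression by a purely combinatorial subdivision of the integration domain, with no twisted homology, no Stokes theorem, and no analogue of the Felder BRST differential. The iterated integral over nested circles is an integral over the cube $[0,2\pi\i]^n$ on the multivalued cover; one subdivides this cube into the $n!$ simplices $\Delta^\sigma$, $\sigma\in\S_n$, determined by the ordering of the angles $\arg z_1,\ldots,\arg z_n$. Comparing $\int_{\Delta^\sigma}$ with $\int_{\Delta^{\mathrm{id}}}$ for the relabelled integrand produces exactly the braiding phases $\sigma(\alpha_i,\alpha_j)=e^{\pi\i(\alpha_i,\alpha_j)}$, and summing over $\sigma$ assembles the quantum symmetrizer: the cube integral equals the symmetrizer applied to the single-simplex integral, i.e.\ $\zem_v=\tilde\zem_{\mathfrak{S}_n(v)}$ where $\tilde\zem$ denotes the simplex integral. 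From this \emph{identity of integrals}, $\ker\mathfrak{S}_n\subseteq\ker\zem$ is immediate. This is the content your step (iii) gestures at, but once carried out correctly it shows that bounding chains, boundary estimates at $z_j=z_k$, and a twisted de Rham pairing are never needed; the mechanism is a partition of the domain, not a vanishing cycle.
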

\begin{proof}
We sketch the idea of proof: If we try to compute the composition of screening operators, we encounter in the simplest case
  $$\big(\zem_{e^{\alpha_1}}\cdots\zem_{e^{\alpha_k}}\big)e^\lambda
    =\Fp((\alpha_i,\alpha_j)_{ij},(\alpha_i,\lambda)_i)\cdot e^{\alpha_1+\cdots+\alpha_k+\lambda}
    +\text{ higher terms},$$
for certain generalizations of Selberg integrals \cite{Sel44}, see the survey \cite{FW08},   
$$\Fp((m_{ij}),(m_i)):=\int\cdots\int_{[0,{2\pi\i}]^n} \prod_i z_i^{m_i}\prod_{i<j}(z_i-z_j)^{m_{ij}}\; \mathrm{d} z_1\ldots \mathrm{d} z_n,$$
where  $m_i\in\C$ and $m_{ij}=m_{ji}\in\C$ for $1\leq i,j\leq n$ and the integral is over a certain lift of the $n$-torus $(S^1)^n$ to a cube $[0,2\pi\i]^n$ in the multivalued covering. Note that the integral has a-priori no reasonable relation to the same integral with some indices $i,j$ switched if $m_{ij}\not\in\Z$, because this changes the chosen contour.  

The assertion amounts to the proof that these Selberg integrals have linear relations according to the linear relations of monomials in the Nichols algebra $\NicholsOf(X)$. If we refer to the general definition of Nichols algebras in terms of quantum symmetrizers, then this can be concluded if $\Fp$ can be written as quantum symmetrizer of some other integral $\tilde{\Fp}$, and indeed we can find such an expression by subdividing $[0,2\pi\i]^n$ into $n!$ simplices $\Delta^\sigma,\sigma\in\S_n$ with fixed ordering of the angles of $z_1,\ldots,z_n$. 

See \cite{Len21} Example 5.21 for the quadratic relations explicitly worked out in this way, in this case the Selberg integral is simply the Euler Beta-integral.

I should mention that, graphically, these ideas were present in \cite{ST12} for Nichols algebras and independently in \cite{Ros97} for quantum groups, but in the current setup the vertex algebra part as well as the complex analysis part, including convergence, require additional ideas. On the other hand, note that there exist versions of the Selberg integral for other root systems \cites{TV03,War09} with relations to MacDonald polynomials. It would be a nice project to extend their results to generalized root systems of Nichols algebras. 
\end{proof}

At this point I can formulate the logarithmic Kazhdan Lusztig conjecture, in my personal perspective and generality:
\begin{conjecture}[Logarithmic Kazhdan Lusztig conjecture]\label{con_KL}
Let $\alpha_1,\ldots,\alpha_n\in\R^n$ and again assume the convergence condition in the previous theorem. Consider the subalgebra $\cW\subset \cH^{\otimes n}$ defined as intersection of the kernel of the screening operators $\zem_{\alpha_1},\ldots,\zem_{\alpha_n}$. Then conjecturally there is in good cases\footnote{Surely some finiteness will be required for this statement, moreover there are non-rigid examples where the functor cannot be expected to be faithful, see \cite{Len25} Section 8 for a discussion of counterexamples.}  an equivalence of  braided tensor categories
$$\Rep(\cW)\cong \cZ_\cC(\Rep(\NicholsOf(X))(\cC)),$$
where the right-hand side is the generalized quantum group in Section \ref{sec_quantumgroups} associated to the Nichols algebra of screenings in Theorem \ref{thm_screenings}, explicitly
$$\cC=\Vect_{\R^n}^{1,\sigma},\qquad X=\C_{\alpha_1}\oplus\cdots\oplus \C_{\alpha_n},\qquad
\sigma(\alpha_i,\alpha_j)=e^{\pi\i(\alpha_i,\alpha_j)},$$
\end{conjecture}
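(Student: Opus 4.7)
The overall strategy would be to realize $\cW \subset \V$ as a Nichols-algebra-type categorical extension and then invoke the relative Tannakian reconstruction of Theorem~\ref{thm_LM}, followed by the identification of the resulting category with the relative Drinfeld center. The guiding intuition is that $\V$, viewed as a $\cW$-module, should decompose according to a Hopf-algebra-like grading controlled by $\Nichols$: loosely $\V \simeq \cW \# \Nichols^*$, with the action of $\Nichols$ arising from how the screenings move between weight sectors. This expresses $\V$ as a commutative algebra in $\Rep(\cW)$, so $\cC = \Rep(\V)$ appears as a central braided subcategory of $\Rep(\cW)$, mirroring the appearance of $\cC$ inside $\cZ_\cC(\Rep(\Nichols)(\cC))$.

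First I would construct a functor $\Rep(\cW) \to \cZ_\cC(\Rep(\Nichols)(\cC))$ by induction. For a suitable $\cW$-module $\cM$, form $\tilde{\cM} := \V \otimes_\cW \cM$, which lies in $\Rep(\V) = \cC$. The screening operators $\zem_{\alpha_i}$ vanish on $\cW$ by construction but act nontrivially on $\tilde{\cM}$; by Theorem~\ref{thm_screenings} their compositions satisfy the Nichols algebra relations, equipping $\tilde{\cM}$ with the structure of an object of $\Rep(\Nichols)(\cC)$. The monodromy of the underlying intertwining operators provides a half-braiding of $\tilde{\cM}$ with every object of $\cC$, compatible with $\sigma$, placing $\tilde{\cM}$ in the relative center.

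Next I would show this functor is a braided tensor equivalence. Faithfulness follows from a commutant/Galois argument: $\cW$ is by definition the joint kernel of the screenings, so $\cW$-module morphisms and those $\V$-module morphisms that commute with screening-intertwiners are in bijection. For essential surjectivity, simple objects of $\cZ_\cC(\Rep(\Nichols)(\cC))$ correspond to simple Yetter-Drinfeld modules over $\Nichols$ inside $\cC$; each such object would be realized as a simple $\cW$-module carved out of a twisted Heisenberg sector, generalizing the orbifold-type construction of Example~\ref{exm_VOAGOrbifoldA} to the quantum-group symmetry generated by the screenings. Compatibility of tensor structures then follows from matching the monodromy of $\cW$-module intertwining operators with the algebraically defined half-braiding; Theorem~\ref{thm_LM}, applied to an appropriate fibre functor $\Rep(\cW) \to \cC$ obtained by taking Nichols-invariants, then reconstructs the Hopf algebra abstractly and packages everything as the claimed equivalence.

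The main obstacle is analytic. The subpolarity hypothesis of Theorem~\ref{thm_screenings} controls convergence for compositions of screenings applied to free-field correlators, but the Huang-Lepowsky-Zhang braided tensor structure on $\Rep(\cW)$ presupposes $C_2$-cofiniteness (or a suitable analog), which is notoriously hard to establish for general kernels of screenings beyond the rank-one triplet case. Moreover, matching the conformal-block monodromy with the $R$-matrix-derived half-braiding requires detailed control over $\cW$-correlators, extending the Knizhnik-Zamolodchikov analysis of \eqref{exm_KZequation} to the non-semisimple setting with multi-screening Selberg-type integrals. Finally, when $q$ has even order a quasi-Hopf refinement of $\Nichols$ is needed, corresponding to subtle associator anomalies on the vertex algebra side that must be tracked carefully; this is presumably the origin of the "technical assumptions on the analysis side" alluded to in the introduction.
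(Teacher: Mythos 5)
The statement you are attempting to prove is a conjecture; the paper does not prove it in general, but Section~\ref{sec_proof} lays out a strategy that yields conditional proofs in special cases, and that is the appropriate comparison. Your proposal correctly captures the high-level architecture (treat $A=\V$ as a commutative algebra in $\Rep(\cW)$, reconstruct $\Rep(\cW)$ as a relative centre, invoke the relative Tannakian Theorem~\ref{thm_LM}), but it blurs the distinction that carries the entire argument: the induction $\V\otimes_\cW(-)$ does \emph{not} land in $\cC=\Rep(\V)$, it lands in $\cB=\Rep_\cW(\V)=\cD_A$, the category of twisted modules. Conflating $\cB$ with $\cC$ is not a notational slip; it hides exactly the step that is hardest. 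The whole content of the strategy in \cite{CLR23,Len25} is to \emph{prove} the identification $\cB\cong\Rep(\Nichols)(\cC)$, whereas your proposal simply asserts it, saying that the screenings ``equip $\tilde{\cM}$ with the structure of an object of $\Rep(\Nichols)(\cC)$.'' The paper explicitly flags, as an open problem at the end of Section~\ref{sec_proof}, that the Delta-deformation construction gives a functor $\Rep(\NicholsOf(X))(\cC)\to\cB$ expected to be fully faithful, with essential surjectivity unclear; you are presupposing the very thing that is open.

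Concretely, what your proposal is missing compared to the paper's route: (a) one must first establish, from vertex-algebra input, that every simple object of $\cB$ is local (this is a nontrivial computation in \cite{CLR23} Section 7), because this gives the splitting tensor functor $\cB\to\cC$ that feeds into Theorem~\ref{thm_LM}; your proposal instead gestures at ``taking Nichols-invariants'' and attaches the fibre functor to the wrong category $\Rep(\cW)$, whereas Theorem~\ref{thm_LM} applies to $F:\cB\to\cC$. (b) Theorem~\ref{thm_LM} only yields an abstract Hopf algebra $\Nichols$ in $\cC$; identifying it with the Nichols algebra of screenings requires computing $\Ext^1_\cB(\unit,\C_a)$ to produce primitive elements and then invoking Angiono's generation-in-degree-one and no-deformation results for diagonal Nichols algebras — none of which appears in your proposal. (c) Essential surjectivity is where the hard quantitative input enters: in \cite{Len25} it is carried by comparing asymptotics of graded characters with Frobenius-Perron dimensions, via the flag-variety realization and Lefschetz fixed-point formula, not by ``carving simple $\cW$-modules out of twisted Heisenberg sectors.'' Your diagnosis that $C_2$-cofiniteness and the existence of the vertex tensor structure are the analytic bottleneck is correct and matches the paper's framing, but the algebraic core of the argument — the splitting, the Hopf-algebra reconstruction applied to $\cB$ rather than $\Rep(\cW)$, and the classification input identifying $\Nichols$ — is absent.
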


Using simple current extensions, these correspondences can easily be transported to other Cartan parts, for example a lattice vertex algebra $\V_\Lambda$ and a generalized quantum group over $\cC=\Vect_{\Lambda^*/\Lambda}^{\bar{\omega},\bar{\sigma}}$.
\medskip

\begin{example}\label{ex_KLQuantum}
For the data in Example \ref{exm_quantumBorelpart}, the conjecture says that there is an equivalence of braided tensor categories of representations of the \emph{Feigin-Tipunin algebra} $\cW_p(\g)$ and representations of the quasi-quantum group $\tilde{u}_q(\g)$. With the Cartan part in Conjecture~\ref{con_KL}, the conjecture relates the singlet vertex algebra to the unrolled quantum group. For $\g=\sl_2$ this is now a theorem \cites{FGST05, AM08, TW13, CGR20, CLR21, GN21, CLR23}.

Let us mention that there is an important alternative characterization of $\cW_p(\g)$ in terms of cohomology of bundles over the flag variety in \cite{FT10}, which has been proven meanwhile in \cites{Sug21,Sug23}.
\end{example}

In \cite{FL22} we have done more detailed work on examples for other Nichols algebras of diagonal type, and in Section \ref{sec_proof} we will discuss our recent proof of the Kazhdan-Lusztig conjecture in Example \ref{ex_KLQuantum} for $\sl_2$ in \cite{CLR23} and for general $\g$ in \cite{Len25}. 

Now we go beyond the situation covered by Conjecture \ref{con_KL} which were realizations in the lattice vertex algebra and Nichols algebras of diagonal type: I am  convinced that the assertion about the screening operators in Theorem \ref{thm_screenings} and the logarithmic Kazhdan-Lusztig conjecture should hold for arbitrary well-behaved vertex algebras $\V$. That is:

\begin{problem}\label{prob_KLnondiagonal}
 Whenever we pick a vertex algebra $\V$ and a choice of module $\cM=\cM_{1}\oplus \cdots\oplus \cM_n$, then the corresponding algebra of screening operators should be given by the Nichols algebra $\NicholsOf(\cM)$ of rank $n$ inside $\Rep(\V)$. Moreover, the kernel of screening vertex algebra $\cW\subset \V$ should in good cases have a braided tensor category of representations equivalent to a generalized quantum group 
$$\Rep(\cW)\cong \cZ_\cC(\Rep(\NicholsOf(X)(\cC)).$$
The first assertion, the generalization of Theorem \ref{thm_screenings}, seems to be within reach, but there are two conceptual problems:
\begin{itemize}
    \item Instead of working with explicit Selberg integrals, we have to use the machinery of differential equations with regular points in \cite{HLZ06}. In fact, imitating the proof above requires precisely all that we have generally: The tensor product given by a universal intertwiner and the braiding given by analytic continuation around $z=0$.
    \item In a general module, there is no preferred generator $a_i=e^{\alpha_i}$. In fact, we will have to consider integrals of intertwiners simultaneously for all $a_i\in \cM_i$, which are of course not independent. Then the notion of the kernel of screening operators has to be replaced by a notion of the kernel of intertwining operators.
\end{itemize}
\end{problem}

Even if a proof of such a far reaching conjecture is not in sight at the moment, or it is wrong altogether, it has the following interesting implications, which has already sparked different collaborations:
\begin{itemize}
\item It proposes for any extension of vertex algebras $\cW\subset \V$ given by screening, for example free field realizations, a corresponding modular tensor category of representations.
\item It proposes for any braided tensor category of the form $\cZ_\cC(\NicholsOf(X)(\cC))$ a realization by a vertex algebra $\cW$, assuming we have been given a realization of the Cartan part~$\cC$ by a vertex algebra $\V$.
\end{itemize}

\begin{problem}
One can ask if there exists any nonsemisimple modular tensor category which is \emph{not} of the form $\cZ_\cC(\Rep(\cB)(\cC))$ for some semisimple modular tensor category $\cC$. Equivalently stated: is there a semisimple modular tensor category in any Witt class? The quest to classify all modular tensor categories with a fixed ''semisimple part'' can be seen as the categorical and the braided version of the Andruskiewitsch-Schneider program discussed in Section \ref{sec_quantumgroups}.
\end{problem}

\section{Examples beyond quantum groups}

In \cite{FL22}, which was joint work with my PhD student I. Flandoli, contains first steps toward other finite-dimensional Nichols algebras of diagonal type. It also contains a quite extensive introductory Section 2 on Nichols algebras and Section 3 on screening operators, which may be helpful to the reader interested in more details than we explained at this point. 

In this article, we go through Heckenberger's list of finite-dimensional Nichols algebras of diagonal type and classify non-integral lattices $\Lambda$ that realize the braiding, in the sense that the Gram matrix of the lattice is $m_{ij}=(\alpha_i,\alpha_j)$ and the braiding is $q_{ij}=e^{\pi\i m_{ij}}$. We depict such a choice of realization by adding $m_{ij}$ to the $q$-diagram:
\begin{center}
\begin{tikzpicture}
		\draw (0,0)--(1.6,0);
		\draw (-0.1,0) circle[radius=0.1cm] node[anchor=south]{$ q_{11}$}  node[anchor=north]{$ m_{11}$}
		(1.7,0) circle[radius=0.1cm] node[anchor=south]{$ q_{22}$}node[anchor=north]{$ m_{22}$};
		\draw (0.8,0) node[anchor=south]{$ q_{12}q_{21}$}
        node[anchor=north]{$ 2m_{12}$};
\end{tikzpicture} 
\end{center}
We look for realizations $(m_{ij})$ that possess an additional invariance under reflection, see Definition 5.1, as formulated in \cite{ST12}. We find this condition also  very reasonable, although at present we have never applied it. It is interesting to note that this condition makes the solution unique, except for cases in which different series coincide. For example, at $q^4=1$ there is an isomorphism $u_q(\sl_3)\cong u_q(\sl_{2|1})$, but the realizations $(m_{ij})$ for these two series are different even for such this value of $q$ (where $m$ is half-integer valued):

\begin{center}
\begin{tikzpicture}
		\draw (0,0)--(1.6,0);
		\draw (-0.1,0) circle[radius=0.1cm] node[anchor=south]{$ q^2$}
        node[anchor=north]{$ 2m$}
		(1.7,0) circle[radius=0.1cm] node[anchor=south]{$ q^2$}
        node[anchor=north]{$ 2m$};
		\draw (0.8,0) node[anchor=south]{$ q^{-2}$}
        node[anchor=north]{$ -2m$};
\end{tikzpicture} 
\hspace{2cm}
\begin{tikzpicture}
		\draw (0,0)--(1.6,0);
		\draw (-0.1,0) circle[radius=0.1cm] node[anchor=south]{$ q^2$}
        node[anchor=north]{$ 2m$}
		(1.7,0) circle[radius=0.1cm] node[anchor=south]{$ -1$}
        node[anchor=north]{$ 1$};
		\draw (0.8,0) node[anchor=south]{$ q^{-2}$}
        node[anchor=north]{$ -2m$};
\end{tikzpicture} 
\end{center}
Another important point we discuss in this article is, that for such cases beyond $u_q(\g)$ the convergence condition in Theorem~\ref{thm_screenings} often does not hold in general any more. This means that for each relation of the Nichols algebra, the Selberg integrals have to be analytically continued: Their poles encode extensions of the Nichols algebras (called \emph{pre-Nichols algebra} in \cite{An16}), outside those poles Theorem \ref{thm_screenings} will still hold. We demonstrate this in \cite{FL22} Section 4 for the truncation relation and the quantum Serre relations of type $A_2$, but this work is by no means complete. 
%
Meanwhile, there has been significant progress on the analysis side \cite{Suss24}, so that the analytic continuation is now available to us. 

\begin{problem}
    Determine which Nichols algebra relations hold for which lattice parameters $(m_{ij})$ and compare this to lifting data of Nichols algebras. 
\end{problem}

I also list some cases for the Kazhdan Lusztig conjecture in the more general sense of Problem \ref{prob_KLnondiagonal}, which  are interesting and particularly tangible:

\begin{itemize}
    \item We start with a group orbifold of a lattice vertex algebra $\V=(\V_\Lambda)^G$, see Example~\ref{exm_VOAGOrbifoldA}. If we take a kernel by screening operators from the untwisted sector $\cC_1$, then this is equivalent to taking a kernel of these screening operators on $\V_\Lambda$ first and then orbifold the result by $G$, where $G$ acts on the Nichols algebra of those screenings. This case should be closely related to the Nichols algebra of quantum groups over nonabelian groups we have constructed in Section \ref{sec_quantumgroups}, and the proof of the logarithmic Kazhdan-Lusztig conjecture for $(\V_\Lambda)^G$ should follow from the one for $\V_\Lambda$. 
    
    Alternatively, we could take screening operators from the twisted sectors $\cC_g$ and I have no intuition what their Nichols algebras, kernel of 
    screenings etc. are. 
    
    \item We start with an affine Lie algebra $\hat{\g}_\kappa$ at positive integer level, so $\cC$ is the well-known finite semisimple modular tensor category, which has no fibre functor to $\Vect$. The smallest case is the category with simple objects $\unit,X$ and $X\otimes X=X+1$. In this setting, the screening operators have a-priori a distinct Lie-theoretic flavor, for example $\g$ acts on the top space of the vertex algebra and its modules.
    
    Nichols algebras in this category have not been studied, to my knowledge. They can be produced from a larger Lie algebra with a parabolic Lie subalgebra with Levi factor $\g$, and similarly for other Nichols algebras, as follows: For $J\subset I$ a subset of simple roots, write the coinvariants of $\NicholsOf(I)$ with respect to $\NicholsOf(J)$ as a Nichols algebra over $\NicholsOf(J)$ as in \cite{AHS10} and now take the semisimplification of the latter. Then the image of the coinvariants is a Hopf algebra (typically again a Nichols algebra) over the semisimplification of the quantum group associated to $J$. However, it is not clear if all finite-dimensional Nichols algebras arise this way.
\end{itemize}

\begin{problem}
Construct and classify finite-dimensional Nichols algebras of objects in the twisted sectors of a $G$-crossed extension of an abelian group, for example the Tambara-Yamagami category. Classify finite-dimensional Nichols algebras of objects in the semisimple modular tensor category $\Rep(\hat{\g}_\kappa)$, or equivalently the semisimplification of the quantum group $u_q(\g)$, beyond those obtained by semisimplifying Nichols algebras over $u_q(\g)$.
\end{problem}

\newcommand{\loc}{{loc}}
\newcommand{\spl}{split}

\section{Group symmetries and the big quantum group}

The Feigin-Tipunin algebras $\cW_p(\g)$ in Example \ref{ex_KLQuantum} conjecturally carry an action of  the Langlands dual Lie group $G^\vee$ by additional local screening operators. For simply-laced $\g$ this is rigorously established in \cite{Sug21}. For more general vertex algebras constructed as kernel of screenings I would expect a similar action of a certain Lie group associated to the so-called roots of Cartan type of the Nichols algebra.

\begin{question}
What are the group orbifolds of $\cW_p(\g)$ under the action of $G^\vee$? The result should be a $G^\vee$-crossed braided tensor category $\bigoplus_g \cC_g$, in some sense, where $\cC_g$ for $g\in G^\vee$ are the $g$-twisted modules, and then some $G^\vee$-equivariantization. As we shall see, this will lead to infinite-dimensional versions of the quantum group, and in this sense the logarithmic Kazhdan-Lusztig correspondence extends to these cases. 
\end{question}

For $g\in H^\vee$ in the Cartan subgroup, the orbifold is generically the singlet vertex algebra in Example \ref{ex_KLQuantum}, and the twisted modules are simply full Heisenberg vertex algebra modules with non-integral conformal weight. There are also results in \cite{ALM13} for nonabelian  finite subgroups of $\SL_2(\C)$ acting on the triplet algebra $\cW_p(\g)$.

In \cite{FL24}, which is joint work with B. Feigin, we approach this problem from different sides. The smallest case $\cW_2(\sl_2)$ corresponds to the vertex superalgebra of symplectic fermions, where we identify silently $\sl_2=\mathfrak{sp}_2$. Here we can write down explicitly a twisted mode algebra (in the sense of Example \ref{exm_affineg}), which has generators $\psi^\pm_k$ for $a\in\{\pm\}$ and $k\in\Z$, and relations 
$$\{\psi^a_m,\psi^b_n\}=m\begin{pmatrix} 0 & 1 \\ -1 & 0\end{pmatrix}_{a,b}\delta_{m+n=0}+\begin{pmatrix}
0& -1 \\ 1 & 0
\end{pmatrix}(Ae^a,e^b)\delta_{m+n=0},$$
where $A\in \sl_2$ such that $g=e^{2\pi\i A}$. From this we can determine easily all $g$-twisted module by inducing modules of the algebra generated by $\psi_0^+,\psi_0^-$, note that this algebra literally coincides with the small quantum group for $q^4=1$ and deformations thereof. New and  interesting are the cases where $A$ is nilpotent, which we treat in accordance with \cite{Bak16}. In this case the simple modules are not  direct sums of submodules of Heisenberg modules anymore, but extensions thereof, which piece together to Verma modules and coVerma modules, and ultimately to tilting modules, see \cite{FL24} Section 2.2 and the figures therein.   

We devise a method to construct $g$-twisted modules for general $\cW_p(\g)$: Any $g$ is contained in some Borel subgroup $B^\vee$, and we can choose a realization of $\cW_p(\g)$ as kernel of screening operators of a lattice vertex algebra $\V_\Lambda$, such that the action of $B^\vee$ on $\cW_p(\g)$ extends to the action of $B^\vee$ on $\V_\Lambda$ by local screening operators. We can now obtain $g$-twisted modules of $\cW_p(\g)$ by restricting $g$-twisted modules of $\V_\Lambda$ (we call this \emph{twisted free field realization}), moreover because the action on  $\V_\Lambda$ is inner, all such modules arise from Delta-deformation \cites{DLM96, Li97,FFHST02,AM09}.

The quantum group side of this construction should be as follows:

\begin{problem}
For the orbifold of $\cW_p(\g)$ under the action of $B^\vee$ we expect the $g$-twisted modules to correspond to modules over the quantum group with big center by Kac, DeConcini and Procesi 
$$\mathcal{O}((G^\vee)^*)\to U_q^{\mathrm{KdCP}}(\g)\to u_q(\g),$$
which has a large central subalgebra isomorphic to the ring of functions over the Poisson dual group, so its category of representations fibres over this group, and the fibre over $1$ is $u_q(\g)$. Here, we only take the fibres over  $B^\vee$ and we should refer to a version with associator, because the order of $q$ is even. In our article, we demonstrate this equivalence, which extends the logarithmic Kazhdan-Lusztig correspondence between $\cW_2(\sl_2)$ and $\tilde{u}_q(\sl_2)$, as abelian categories. 

The modules over the orbifold itself, which is the associated $\mathrm{W}$-algebra, should hence be related in this way to the  $G$-equivariantization, which are the modules over the mixed quantum group in the sense of D. Gaitsgory \cite{Gait21}. Note that the $\mathrm{W}$-algebra is usually attributed to the big quantum group of divided powers (which has a degenerate braiding and is ``smaller''), so this somehow suggest one should admit a more general type of modules, leading to the larger category.
\end{problem}

The actual punchline of \cite{FL24} is that we can construct correspondingly a vertex algebra with a big central subalgebra,  the algebra of $G^\vee$-connections on the formal punctured disc $\C^\times$. The construction uses a semiclassical limit $\kappa\to \infty$ of a generalized quantum Langlands kernel, somewhat similar to the affine Lie algebra at critical level having a big center. Up to gauge transformations $F(z)\in G^\vee((z))$, the connections with regular singularity at $z=0$ are classified by their monodromy $g\in G^\vee$ up to conjugation. It is also tempting to study connections with irregular singularities, as we have done recently in \cite{FL24b}, again in the case $\cW_2(\sl_2)$, and which produces Whittaker modules. There should be associated categories of quantum group representations, probably those beyond highest-weight modules. 

\section{Commutative algebras and a proof strategy}\label{sec_proof}

In this section I present my most recent results, which are ideas developed during my research stay at the University of Alberta and collaboration with T. Creutzig. It allowed us to conclude in \cite{CLR21} the proof of the logarithmic Kazhdan-Lusztig conjecture in the smallest case $u_q(\sl_2)$ at $p=2$, building on the work of many people that already set up the vertex tensor category, computed the abelian category etc. In \cite{CLR23} we have greatly improved our methods to the systematic setup below, which in particular implies the case $\sl_2$ at arbitrary $p$ and other small cases. Most recently, I could use this to prove the case $u_q(\g)$ with $\g$ simply laced, conditional on the category on the vertex algebra side being reasonable \cite{Len25}.

\bigskip

The main additional ingredient is to study extensions of braided tensor categories by commutative algebras, and compare them to extensions by Hopf algebras: Let $A$ be an algebra in a tensor category $\cD$, then there is a straightforward notion of a category of $A$-modules, which we call $\cD_A$, and a tensor category of $A$-$A$-bimodules ${_A}\cD_A$ with the tensor product $\otimes_A$. If $A$ is a commutative algebra in a braided tensor category $\cD$, then as in classical algebra every $A$-module can be turned into an $A$-$A$-bimodule, and $\cD_A$ becomes a tensor category with $\otimes_A$. However, in the braided case there are two inequivalent choices for such an identification, using over- or underbraiding. In turn, the tensor category $\cD_A$ itself is not braided, but there is a braided tensor subcategory $\cD_A^\loc$ consisting of \emph{local $A$-modules} $M$, which are modules such that the following diagram commutes:
\begin{equation*}
\begin{split}
\xymatrix{
A \otimes  M    \ar[rd]_{  \rho_M }  \ar[rr]^{ c_{M, A} \,\circ\,  c_{A, M}} && A\otimes M \ar[ld]^{\rho_M} \\
& M &  \\
} 
\end{split}
\end{equation*}
This appears early in \cites{Par95,KO02}, see \cites{FFRS06,SY24} for key properties of these categories.

We have an adjoint pair of functors: Left-adjoint the right-exact induction functor $A\otimes(-):\,\cD\to \cD_A$, which is a monoidal functor, and right-adjoint the left-exact functor forgetting the $A$-action $\mathrm{forget}_A:\,\cD_A\to \cD$, which is a lax monoidal functor. 
We summarize all structures in the following diagram:
$$
 \begin{tikzcd}[row sep=10ex, column sep=15ex]
   {\cD} \arrow[shift left=1]{r}{A\otimes(-)}  
   & 
   \arrow[shift left=1]{l}{\mathrm{forget}_A} 
   \cB  
   =\cD_A \\
   &\arrow[hookrightarrow, shift left=6]{u}{}
   \cC 
   =\cD_A^\loc
\end{tikzcd}
$$

The main example from our perspective is as follows \cites{HKL15, CKM24}: Let $\cW\subset \V$ be a conformal extension of vertex operator algebras. Then $A=\V$ can be regarded as a commutative algebra in the braided tensor category of modules over $\cW$. An $A$-module structure on a $\cW$-module $\cM$ corresponds, by definition of the tensor product, to an intertwining operator
$$\Y_\cM:\;\V\otimes \cM\to \cM\{z\}[\log(z)],$$
fulfilling certain axioms, such that the restriction to $\cW$ is single-valued. 
We call such a module \emph{twisted $\V$-module} over $\cW$ and denote their category by $\Rep_\cW(\V)$. The local $A$-modules are precisely those for which $\Y_\cM$ itself is single-valued (thus the name local), hence these are the true vertex algebra modules over $\V$. We thus have the picture

\begin{equation}\label{formula_VWdiagram}
 \begin{tikzcd}[row sep=10ex, column sep=15ex]
   \Rep(\cW) \arrow[shift left=1]{r}{A\otimes(-)}  
   & 
   \arrow[shift left=1]{l}{\mathrm{forget}_A} 
   \Rep_\cW(\V) \\
   &\arrow[hookrightarrow, shift left=0]{u}{}
   \Rep(\V)
\end{tikzcd}
\end{equation}

\begin{example}\label{exm_latticeVOA}
The lattice vertex algebra $\V_\Lambda$ is an extension of the Heisenberg algebra $\cH^{\otimes n}$ by a commutative algebra $A=\C_\epsilon[\Lambda]$. As an object, this is a direct sum of invertible objects $\C_\alpha,\alpha\in\Lambda$, and such cases are called \emph{simple current extension}. Now, for an even integral lattice there exists a $2$-cocycle $\epsilon$ that makes the twisted group ring $A$ commutative (in the braided sense). The twisted $\V_\Lambda$ modules are given by cosets $\lambda+\Lambda$, involving the multivalued function $z^{(\alpha, \lambda)}$ for $\alpha\in \Lambda$. To  define a braiding for cosets $\bar{\sigma}(\lambda+\Lambda, \mu+\Lambda)$ we have to choose representatives, and these choices in general prohibit $\bar{\sigma}$ from being bimultiplicative, and thereby cause an associator $\bar{\omega}$. The local modules are those where $(\lambda,\alpha)\in\Z$ for all $\alpha\in \Lambda$, meaning $\lambda$  is in the dual lattice $\Lambda^*$. Hence the picture is:
$$
 \begin{tikzcd}[row sep=10ex, column sep=15ex]
  \Vect_{\R^n}^{1,\sigma} \arrow[shift left=1]{r}{A\otimes(-)}  
   & 
   \arrow[shift left=1]{l}{\mathrm{forget}_A} 
   \Vect_{\R^n/\Lambda}^{\bar{\omega},\bar{\sigma}} \\
   &\arrow[hookrightarrow, shift left=0]{u}{}
   \Vect_{\Lambda^*/\Lambda}^{\bar{\omega},\bar{\sigma}}
\end{tikzcd}
$$
\end{example}
\begin{example}\label{exm_GorbifoldB}
Recall the situation in Example \ref{exm_VOAGOrbifoldA}, namely for a finite abelian group $G$ acting on a vertex algebra $\V$ we consider the orbifold vertex algebra $\cW=\V^G$. Then this is a particular type of extensions $\mathcal{W}\subset \V$ and the concepts we just discussed read as follows: The $g$-twisted modules in Example \ref{exm_VOAGOrbifoldA} are a particular type of twisted modules, with prescribed multivalued behavior, and the known theory concludes (essentially by a dimension argument) that the $g$-twisted modules for all $g\in G$ exhaust all twisted modules. The $1$-twisted modules are precisely the local modules.  The commutative algebra $A$ is the ring of functions $\mathcal{O}(G)$ as an object in the equivariantization:
$$
 \begin{tikzcd}[row sep=10ex, column sep=15ex]
   \bigoplus_{g\in G}  \cC_g /\!/ G \arrow[shift left=1]{r}{A\otimes(-)}  
   & 
   \arrow[shift left=1]{l}{\mathrm{forget}_A} 
   \bigoplus_{g\in G}  \cC_g\\
   &\arrow[hookrightarrow, shift left=0]{u}{}
   \cC_1
\end{tikzcd}
$$

\end{example}

In some sense, the logarithmic Kazhdan-Lusztig correspondence should be understood as a \emph{generalized orbifold with respect to the Nichols algebra of non-local screenings.} Proving the logarithmic Kazhdan-Lusztig conjecture amounts to matching the general picture of extensions \eqref{formula_VWdiagram} to the following corresponding picture for generalized quantum groups: In any $\cZ_\cC(\cB)$ there is a commutative algebra $A$  (the image of the tensor unit of $\cB$ under the induction functor in Section \ref{sec_quantumgroups}), such that the category of $A$-modules is  $\cB$ and the category of local $A$-modules is $\cC$. 

$$
 \begin{tikzcd}[row sep=10ex, column sep=15ex]
   \cZ_\cC(\cB) \arrow[shift left=1]{r}{\mathrm{forget}_{c_{X,-}}}  
   & 
   \arrow[shift left=1]{l}{} 
   \cB\\
   &\arrow[hookrightarrow, shift left=0]{u}{}
   \cC
\end{tikzcd}
$$

\begin{example}
For the small quantum group we have the following picture. Note that induction and restriction are reversed compared to commutative algebras, and that the identification of $\Rep(A),\otimes_A$ with representations over a Hopf subalgebra $u_q(\g)$ is not obvious at all, but holds in a general Hopf algebras setting \cites{Tak79, Skry07}

$$
 \begin{tikzcd}[row sep=10ex, column sep=15ex]
   \Rep(u_q(\g))
   \arrow[shift left=1]{r}{\mathrm{res}}  
   & 
   \arrow[shift left=1]{l}{\mathrm{ind}} 
   \Rep(u_q(\g)^{\geq 0})\\
   &\arrow[hookrightarrow, shift left=0]{u}{}
   \Rep(u_q(\g)^{0})
\end{tikzcd}
$$

\end{example}

In the preprint \cite{CLR23}, which is joint work with T. Creuztig and M. Ruppert, we set up the problem in this manner: The goal is to show $\Rep(\cW)\cong\cZ_\cC(\cB)$ from the known category $\Rep(\V)\cong \cC$ and additional information. First, we solve in general the problem of reconstructing the horizontal arrow for an arbitrary commutative algebra, by further developing in \cite{CLR23} Section 3 a functor devised by P. Schauenburg \cite{Sch01}.

\begin{theorem}\label{thm_Schauenburg}
Let $\cD$ be a finite rigid braided tensor category and  $A$ be a haploid commutative algebra. Then the induction functor $A\otimes (-)$ to $\cB=\cD_A$ upgrades to a fully faithful functor to $\cZ(\cD_A)$, whose image is precisely $\cZ_\cC(\cD_A)$ with $\cC=\cD^\loc$. 
\end{theorem}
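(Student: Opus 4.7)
The plan is to lift the induction functor to the Drinfeld center by producing a canonical half-braiding on every free $A$-module, then verify full faithfulness via Frobenius reciprocity, haploidness and rigidity, and finally identify the essential image as $\cZ_\cC(\cD_A)$. For the lift, one uses the natural identifications $(A\otimes X)\otimes_A M\cong X\otimes M\cong M\otimes_A(A\otimes X)$, obtained by absorbing the $A$-action into $M$ on the appropriate factor, to transport the braiding $c_{X,M}$ of $\cD$ into a candidate half-braiding $b^X_M:(A\otimes X)\otimes_A M\to M\otimes_A(A\otimes X)$. Commutativity of $A$ is exactly the coherence ensuring that $c_{X,M}$ descends to relative tensor products, and the two hexagons for $b^X$ reduce directly to those of $c$ in $\cD$; naturality in $M$ and functoriality in $X$ are immediate.

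For full faithfulness, Frobenius reciprocity gives $\Hom_{\cD_A}(A\otimes X,A\otimes Y)\cong\Hom_\cD(X,A\otimes Y)$, which by rigidity of $\cD$ equals $\Hom_\cD(X\otimes Y^*,A)$. A morphism $f:A\otimes X\to A\otimes Y$ lifts to $\cZ(\cD_A)$ iff it intertwines $b^X$ and $b^Y$ against every $M\in\cD_A$. Testing on the generating family $M=A\otimes Z$ and using haploidness $\End_{\cD_A}(A)=\unit$, the intertwining condition cuts the Hom-space down to the summand landing in $\unit\hookrightarrow A$, which by rigidity recovers $\Hom_\cD(X\otimes Y^*,\unit)\cong\Hom_\cD(X,Y)$. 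That the image lies in $\cZ_\cC(\cD_A)$ is then essentially by construction: when $M\in\cC=\cD^\loc$ the over- and under-braidings coincide, so $b^X_M$ agrees with the tautological half-braiding carried by every local module inside $\cD_A$.

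The main obstacle is essential surjectivity onto $\cZ_\cC(\cD_A)$. Given $(N,b)\in\cZ_\cC(\cD_A)$, my plan is to extract an underlying $\cD$-object via Beck's monadicity theorem applied to the adjunction $\widetilde F\dashv U$ between $\cD$ and $\cZ_\cC(\cD_A)$, where $U$ is a suitably chosen partial forgetful functor. The half-braiding condition against all of $\cC$ is precisely what should force the induced monad on $\cD$ to collapse to the identity — intuitively, an object of the relative center already carries the coherence data to be free over $A$. Rigidity of $\cD$ and haploidness of $A$ then supply the exactness and preservation of reflexive coequalizers required for monadicity to conclude that $\widetilde F$ is an equivalence onto $\cZ_\cC(\cD_A)$. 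An alternative, more hands-on route constructs a quasi-inverse directly by writing down the underlying $\cD$-object of $(N,b)$ as an equalizer of two natural maps built from the $A$-action on $N$ and the half-braiding $b_A$ with the unit $A\in\cC$, which is tautological precisely because $A$ is local.
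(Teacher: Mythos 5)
Your lifting construction and the full-faithfulness sketch are on the right track: transporting $c_{X,M}$ through $(A\otimes X)\otimes_A M\cong X\otimes M\cong M\otimes_A(A\otimes X)$, using commutativity of $A$ for $A$-bilinearity of the half-braiding, and then using Frobenius reciprocity plus haploidness to cut $\Hom_\cD(X,A\otimes Y)$ down to $\Hom_\cD(X,Y)$ — this is the standard Schauenburg-type argument, and the easy containment of the image in $\cZ_\cC(\cD_A)$ (over- and under-braidings coincide on local modules) is also correct. Note that the paper itself does not prove this theorem: it is a survey statement, attributing the functor to Schauenburg \cite{Sch01} and its further development to \cite{CLR23} Section 3, so you are not expected to match an in-text proof.

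The genuine gap is essential surjectivity, and both of your proposed routes fail as written. The ``hands-on'' equalizer route extracts an underlying $\cD$-object from the $A$-action and ``the half-braiding $b_A$ with the unit $A\in\cC$''; but $A$ is the tensor unit of $\cD_A$, and the unit constraint on half-braidings forces $b_{N,A}=\mathrm{id}_N$ (after the unitors). So this equalizer datum is vacuous and cannot recover a candidate $X$ with $A\otimes X\cong N$. The monadicity route is underspecified: you have not said which adjunction is being monadified, nor why the constraint of agreeing with the central structure on $\cC$ ``collapses the monad to the identity'' — and a priori it does not, since the monad $A\otimes(-)$ on $\cD$ is not the identity unless $A=\unit$. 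What one actually needs is an argument that every $(N,b)\in\cZ_\cC(\cD_A)$ has $N$ free over $A$ with $b$ the canonical half-braiding, or, more realistically in the finite setting, a structural argument such as a Frobenius--Perron dimension count (showing $\mathrm{FPdim}\,\cD=\mathrm{FPdim}\,\cZ_\cC(\cD_A)$, so a fully faithful exact tensor functor is already an equivalence) or invoking Müger-type centralizer theorems for $\cZ(\cD_A)$. Until one of these is supplied, the image identification is not proved.
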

Hence, $\cD=\Rep(\cW)$ can be reconstructed from the category of twisted modules $\cB=\Rep_\cW(\V)$, as in the group orbifold case it can be reconstructed by $G$-equivariantization. 

We then use in this article the existing knowledge about the braided tensor category of representations $\cD$ of the triplet vertex algebra  ($C_2$-cofiniteness, abelian category, tensor products of simple objects) to compute in Section~7 the abelian structure of the category of twisted modules $\cD_A$. 

We find that the only simple modules in $\cD_A$ are the local modules in $\cD_A^\loc$, which is very different from the $G$-orbifold case. Differently spoken, any object in $\cD_A$ has a composition series of local modules. From this we  find a splitting tensor functor $\cD_A\to \cD_A^\loc$ that sends every module to the associated graded module, which is local. By our Theorem \ref{thm_LM} this implies\footnote{In the discussed article this result was not yet available to us, we argued more ad-hoc using the known abelian structure.} that there exists a Hopf algebra $\Nichols$ such that
$$\cB \cong \Rep(\Nichols)(\cC).$$ 
By additional arguments (using that we have previously established that the triplet algebra category can be realized by a quasi-Hopf algebra and by grading arguments) we find that $\Nichols$ must be the smallest Nichols algebra discussed in Example \ref{exm_NicholsRank1},
$$\Nichols=\C[x]/x^p=\tilde{u}_q(\sl_2)^+.$$
From there we can reconstruct $\cD$ from Theorem \ref{thm_Schauenburg} and with the categorical description of the quantum group $\tilde{u}_q(\sl_2)$ in Section \ref{sec_quantumgroups} we have:

\begin{theorem}
The logarithmic Kazhdan Lusztig conjecture holds in the case $\sl_2$, and some similar cases in rank $1$, for example $\mathfrak{gl}(1|1)$.
\end{theorem}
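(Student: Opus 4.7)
The plan is to match the vertex algebra extension $\cW \subset \V$ to a categorical extension governed by a commutative algebra, and then use a relative Tannaka reconstruction to pinpoint the Nichols algebra on the other side. Write $\cC = \Rep(\V)$, which by the lattice vertex algebra example is the known (pre-)modular tensor category $\Vect_{\Lambda^*/\Lambda}^{\bar{\omega},\bar{\sigma}}$, and $\cD = \Rep(\cW)$, which is known to be a finite rigid braided tensor category whose simples, projective covers, and tensor products of simples have been computed in the literature on the triplet algebra.

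First, I would invoke Theorem \ref{thm_Schauenburg} applied to the haploid commutative algebra $A = \V$ in $\cD$: the induction functor upgrades to a braided equivalence
$$\cD \;\cong\; \cZ_{\cD^\loc}(\cD_A) \;=\; \cZ_\cC(\Rep_\cW(\V)),$$
so it suffices to exhibit an equivalence $\Rep_\cW(\V) \cong \Rep(\Nichols)(\cC)$ of tensor categories over $\cC$ with $\Nichols = \C[x]/x^p$. Second, I would use the known abelian structure of $\cD$ together with the exact induction functor $\V \otimes (-)$ to compute the abelian structure of $\Rep_\cW(\V)$ and establish the decisive structural fact: every simple twisted $\V$-module is already local, so $\Rep_\cW(\V)$ is a (non-semisimple) thickening of $\cC$ in which every object has a composition series by objects of $\cC$. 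This yields a splitting tensor functor
$$\Rep_\cW(\V) \longrightarrow \cC, \qquad M \longmapsto \mathrm{gr}(M),$$
sending each twisted $\V$-module to its associated graded with respect to the filtration by local subquotients; this functor is the identity on $\cC$, faithful, and exact.

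Third, I would apply Theorem \ref{thm_LM} to this fibre functor, which delivers a Hopf algebra $\Nichols$ in $\cC$ with $\Rep_\cW(\V) \cong \Rep(\Nichols)(\cC)$ as central extensions of $\cC$. To identify $\Nichols$ concretely, I would combine two inputs: the natural $\Z$-grading on twisted modules by conformal weight (or equivalently the free-field grading inherited from $\V_\Lambda$), and the fact established in Theorem \ref{thm_screenings} that the non-local screenings generate the Nichols algebra $\NicholsOf(\C_\alpha) = \C[x]/x^p$. Since $\Nichols$ is a graded, connected Hopf algebra generated in degree one by a single primitive with the right braiding, and since the triplet category is already known to be realized by the quasi-Hopf algebra $\tilde u_q(\sl_2)$, one forces $\Nichols \cong \C[x]/x^p$. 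Combining with Theorem \ref{thm_quantumgroupscat} then gives $\cZ_\cC(\Rep(\Nichols)(\cC)) \cong \Rep(\tilde u_q(\sl_2))$, closing the loop; the argument for $\mathfrak{gl}(1|1)$ and the other rank-one variants is the same skeleton, with $\Nichols$ replaced by the corresponding rank-one Nichols algebra and $\cC$ by the appropriately braided Cartan category.

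The hard part will be the structural input in the second step: proving that all simple objects of $\Rep_\cW(\V)$ are local, which is what makes the associated-graded splitting exist and is what fails in the group-orbifold analogue of Example \ref{exm_GorbifoldB}. This demands detailed vertex-algebraic control over twisted intertwining operators with logarithmic monodromy and over the abelian category $\Rep(\cW)$ (exactness, rigidity, $C_2$-cofiniteness, composition factors of fusion products with $\V$). The second potential obstacle is rigidity of the identification in step three: Theorem \ref{thm_LM} produces some Hopf algebra $\Nichols$, and one must rule out Hopf-algebra deformations / twist ambiguities to conclude $\Nichols = \C[x]/x^p$ rather than merely a Hopf algebra abstractly Morita-equivalent to it; here the grading, the a priori knowledge of the quasi-Hopf realization, and the uniqueness of rank-one Nichols algebras with fixed self-braiding are what I would use to finish.
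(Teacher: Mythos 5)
Your proposal reproduces the paper's argument essentially step by step: apply Theorem~\ref{thm_Schauenburg} to $A=\V$ to reduce to identifying $\Rep_\cW(\V)$, use the known abelian structure of the triplet category to show all simples in $\Rep_\cW(\V)$ are local and extract the associated-graded splitting, invoke reconstruction to get a Hopf algebra $\Nichols$ in $\cC$, and pin it down as $\C[x]/x^p$ via grading and the previously established quasi-Hopf realization, finishing with Theorem~\ref{thm_quantumgroupscat}. The only cosmetic difference is that you route the reconstruction through Theorem~\ref{thm_LM} outright, whereas the paper notes in a footnote that in \cite{CLR23} this theorem was not yet available and the argument there was more ad hoc using the explicit abelian structure.
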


In my recent preprint \cite{Len25} I want to push this approach to the general case described in Conjecture~\ref{con_KL}, without prior knowledge of the abelian category. The main problem is that on the vertex algebra side there is not enough information about the kernel of screening to even prove the existence of the vertex tensor product, let alone rigidity, although the methods have recently improved drastically, both on the vertex algebra side, see e.g. \cites{CMY23,NORW24}, 
and categorical side \cite{CMSY24}. So the following algebraic ideas are conditional on the structure on the vertex algebra side being suitably well-behaved:
\begin{itemize}
\item We have meanwhile proven in \cite{LM24} the general reconstruction result in Theorem \ref{thm_LM}, which guarantees a realizing Hopf algebra in $\cC$ in split situations.
\item Even if we do not know whether $\cC\hookrightarrow\cB$ admits a splitting, we can consider the subcategory $\cB^{\spl}\subset \cB$ consisting of objects with composition series in $\cC$. Then this can be realized by some Hopf algebra $\Nichols$ (we switch to a dual setup of corepresentations for technical reasons)
$$\cB^{\spl}\cong \CoRep(\Nichols)(\cC)$$
Moreover, $\Nichols$ is connected as a coalgebra (this is the dual notion to a basic algebra).
\item Already knowing $\Ext_\cB^1(1, \C_a)$ for simple objects in $\cC$ gives a lot of information: It gives a Hochschild $1$-cocycle, and in the dual setup this is a primitive element $x\in \Nichols$. This means by the universal property of the Nichols algebra that $\Nichols$ has to contain some deformed (i.e. not necessarily graded) version of the Nichols algebra
$$\NicholsOf(X)
\quad \text{for}\quad 
X=\bigoplus_{a\in \Gamma}\Ext_\cB^1(\unit, \C_a)\C_a$$
\item At this point I use deep results on the Hopf algebra classification side: As shown by I. Angiono et. al. \cite{An13} for finite-dimensional Nichols algebras of diagonal type, using their classification, there is no deformation and there holds generation in degree $1$, that is 
$$\Nichols\cong\NicholsOf(X).$$
\item Finally, I show  that for $\cW$ a kernel of screening operators $\zem_{\alpha_1},\ldots,\zem_{\alpha_n}$ there are in $\Rep_\cW(\V)$ extensions of the simple objects $1\to \cM\to \C_{\alpha_i}$ for each $\alpha_i$, essentially by using the information from the $\sl_2$-case 
\end{itemize}

\begin{lemma}
For $\cW$ the kernel of screening operators as in Conjecture \ref{con_KL}, if we assume that all involved tensor categories exist and are finite and rigid, then  $\Rep_\cW(\V)$ has a subcategory $\Rep(\NicholsOf(X))$ according to the conjecture and $\Rep(\cW)$ has a subquotient according to the conjecture. 
\end{lemma}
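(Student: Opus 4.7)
The plan is to treat $\cW\subset\V$ as a commutative algebra extension and to identify the Nichols algebra inside the category of twisted modules by combining the relative Tannakian reconstruction of Theorem~\ref{thm_LM} with explicit rank-one input from the screening operators, and then to transport the result to $\Rep(\cW)$ via Theorem~\ref{thm_Schauenburg}.

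First, the inclusion $\cW\subset\V$ presents $A=\V$ as a commutative haploid algebra in $\cD:=\Rep(\cW)$, placing us in the setup of diagram~\eqref{formula_VWdiagram}: $\cB=\cD_A=\Rep_\cW(\V)$ is the category of twisted $\V$-modules and $\cC=\cD_A^\loc=\Rep(\V)\cong\Vect_{\R^n}^{1,\sigma}$ its braided subcategory of local modules. I would restrict attention to the full tensor subcategory $\cB^{\spl}\subset\cB$ of objects whose composition factors all lie in $\cC$. Since $\cC$ consists of invertibles in $\Vect_\Gamma^{1,\sigma}$, any such composition series splits additively and the associated-graded construction produces a $\cC$-linear fibre functor $\cB^{\spl}\to\cC$. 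The dual form of Theorem~\ref{thm_LM} then yields a connected Hopf algebra $\Nichols$ in $\cC$ with $\cB^{\spl}\cong\CoRep(\Nichols)(\cC)$.

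Next, I would identify $\Nichols$. By the Tannakian dictionary, each primitive element of $\Nichols$ of degree $\alpha$ corresponds dually to a nontrivial class in $\Ext^1_\cB(\unit,\C_\alpha)$, so it suffices to produce, for each screening direction $\alpha_i$, a nontrivial such extension in $\Rep_\cW(\V)$. For this I would reduce to rank one: the joint kernel of all screenings except $\zem_{\alpha_i}$ is an intermediate subalgebra $\cW\subset\cW_i\subset\V$ of singlet type, to which the proven $\sl_2$-instance of the logarithmic Kazhdan-Lusztig correspondence (Example~\ref{ex_KLQuantum}) applies and supplies the required extension. Independence of the $n$ resulting classes (so that no spurious primitives occur) follows from their distinct $\Gamma$-gradings. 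The universal property of the Nichols algebra then gives an embedding $\NicholsOf(X)\hookrightarrow\Nichols$ with $X=\bigoplus_i\C_{\alpha_i}$, and the classification of finite-dimensional Nichols algebras of diagonal type by Angiono et al.~\cite{An13} (no nontrivial filtered deformations, generation in degree one) forces this embedding to be an isomorphism. Hence $\cB^{\spl}\cong\Rep(\NicholsOf(X))(\cC)$, which is the first half of the statement.

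Finally, I would apply Theorem~\ref{thm_Schauenburg} to the algebra $A=\V\in\Rep(\cW)$: it asserts that induction upgrades to a tensor equivalence $\Rep(\cW)\cong\cZ_\cC(\cB)$, and restricting along $\cB^{\spl}\hookrightarrow\cB$ identifies $\cZ_\cC(\cB^{\spl})\cong\cZ_\cC(\Rep(\NicholsOf(X))(\cC))$ as a full subcategory of $\Rep(\cW)$, yielding the subquotient predicted by the conjecture. The principal obstacle is the construction of the rank-one extensions inside $\Rep_\cW(\V)$: this step leaves purely categorical territory and genuinely uses the $\sl_2$ case of the conjecture together with a compatibility argument to transport it into the ambient rank-$n$ free-field setting. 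A second delicate point is verifying that the effective braiding on the primitive piece of $\Nichols$ is of the diagonal type $\sigma$ required to invoke Angiono's classification; this is where the convergence hypothesis of Theorem~\ref{thm_screenings} enters essentially, since it is precisely what guarantees that the screening operators realize the target Nichols algebra relations rather than a pre-Nichols variant.
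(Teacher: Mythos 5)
Your overall strategy tracks the paper's argument closely: pass to the split subcategory $\cB^{\spl}\subset\cB=\Rep_\cW(\V)$, reconstruct a connected Hopf algebra $\Nichols$ in $\cC$ via (the dual form of) Theorem~\ref{thm_LM}, identify primitives with classes in $\Ext^1_\cB(\unit,\C_a)$ and with the screening directions, invoke Angiono's results to eliminate deformations and get degree-one generation, and finally transport to $\Rep(\cW)$ via Theorem~\ref{thm_Schauenburg}. That is the right skeleton.

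There are, however, a few genuine errors and gaps worth flagging. First, your intermediate subalgebra $\cW_i$ is the wrong one: the joint kernel of all screenings \emph{except} $\zem_{\alpha_i}$ is a rank-$(n-1)$ kernel-of-screenings algebra and is not of singlet type, so the $\sl_2$ input does not apply to it. What you want is the kernel of $\zem_{\alpha_i}$ \emph{alone}, which factors as a rank-one singlet tensored with a Heisenberg in the orthogonal directions; this intermediate $\cW\subset\ker\zem_{\alpha_i}\subset\V$ is where the known $\sl_2$ extension lives, and one must then check it restricts non-trivially along $\Rep_{\ker\zem_{\alpha_i}}(\V)\to\Rep_\cW(\V)$. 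Second, the last step is not quite the functor you describe. Theorem~\ref{thm_Schauenburg} gives $\Rep(\cW)\cong\cZ_\cC(\cB)$, and the inclusion $\cB^{\spl}\hookrightarrow\cB$ induces, by \emph{restriction} of half-braidings, a functor $\cZ_\cC(\cB)\to\cZ_\cC(\cB^{\spl})$, not an embedding of $\cZ_\cC(\cB^{\spl})$ into $\cZ_\cC(\cB)$; so the correct conclusion is that $\Rep(\cW)$ has the predicted category as a \emph{subquotient}, exactly as the lemma states, and your phrase ``as a full subcategory of $\Rep(\cW)$'' is not justified. Third, your claim that the embedding $\NicholsOf(\bigoplus_i\C_{\alpha_i})\hookrightarrow\Nichols$ is forced to be an isomorphism overreaches: independence of the $n$ screening classes by grading does not rule out additional extension classes in $\Ext^1_\cB(\unit,\C_a)$ for other $a$, and Angiono's theorem identifies $\Nichols$ with the Nichols algebra of its \emph{full} primitive space $X=\bigoplus_a\Ext^1_\cB(\unit,\C_a)\C_a$, which may strictly contain $\bigoplus_i\C_{\alpha_i}$. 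For the lemma this is harmless since only the subcategory is claimed, but the isomorphism should not be asserted. Finally, your justification ``any such composition series splits additively'' for the existence of the associated-graded fibre functor is misleading: having invertible composition factors makes the Jordan-H\"older multiset canonical but does not split the extensions (otherwise $\cB^{\spl}$ would be semisimple), and establishing that the associated graded is a tensor functor $\cB^{\spl}\to\cC$ is a genuine technical point rather than a formality.
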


To show that this functor is actually an equivalence of categories, there are different options we describe. The easiest one is to use the graded dimensions, which is known for the cases related to simply-laced Lie algebras, because here the Feigin-Tipunin vertex algebra has a realization as sections of a vertex algebra bundle over the flag variety, and the Lefschetz fixpoint formula gives a formula for the graded dimension \cites{FT10,Sug21}. It is expected for $C_2$-cofinite vertex algebra (and proven for rational vertex algebras) that the asymptotics of the graded dimension of a module, as an analytic function,  is related to the Frobenius-Perron dimension of the module. The computation in \cite{BM17} do agree with the dimension of the quantum group module. Thus we can conclude in such a case:

\begin{theorem}[\cite{Len25}]
For $\cW$ the kernel of screening operators as in Conjecture \ref{con_KL}, for $\g$ simply-laced,  if we assume that all involved tensor categories exist and are finite and rigid, and the asymptotic of the graded characters agrees with the Frobenius-Perron dimension, then the conjecture holds. 
\end{theorem}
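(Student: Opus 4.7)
My plan is to assemble the result from the four ingredients developed just above: the Schauenburg-type reconstruction of Theorem \ref{thm_Schauenburg}, the relative Tannakian realization of Theorem \ref{thm_LM} in its connected-coalgebra refinement from \cite{LM24}, Angiono's rigidity of Nichols algebras of diagonal type, and the Frobenius-Perron/asymptotic-character comparison. First I would realize the commutative algebra extension: taking $A=\V$ as a commutative algebra in $\Rep(\cW)$ puts us into the abstract diagram \eqref{formula_VWdiagram}, so that Theorem \ref{thm_Schauenburg} already yields a fully faithful braided tensor embedding
\[
\Rep(\cW)\;\hookrightarrow\; \cZ_{\cC}(\cB),\qquad \cC=\Rep(\V),\quad \cB=\Rep_{\cW}(\V),
\]
whose image is all of $\cZ_\cC(\cB)$. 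Thus the task reduces to identifying $\cB$ with $\Rep(\NicholsOf(X))(\cC)$ as a $\cC$-module tensor category extension, after which applying $\cZ_\cC(-)$ concludes by the categorical description of the quantum group in Theorem \ref{thm_quantumgroupscat}.

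Next I would identify $\cB$ along the route sketched before the theorem. Passing to the split subcategory $\cB^{\spl}\subset\cB$ of objects with composition series in $\cC$, the dual form of Theorem \ref{thm_LM} from \cite{LM24} produces a connected Hopf algebra $\Nichols$ in $\cC$ with $\cB^{\spl}\cong\CoRep(\Nichols)(\cC)$. The Lemma just above the theorem provides, for each simple root $\alpha_i$, a nontrivial extension $1\to\cM\to\C_{\alpha_i}$ in $\cB$, i.e.\ a nonzero class in $\Ext^1_\cB(\unit,\C_{\alpha_i})$; dualizing gives a primitive element in $\Nichols$ of degree $\alpha_i$. By the universal property of Nichols algebras there is an injection $\NicholsOf(X)\hookrightarrow \Nichols$ as graded Hopf algebras in $\cC$, where $X=\bigoplus_i\C_{\alpha_i}$. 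Now the diagonal braiding is precisely of the type classified in \cite{An13}: in the simply-laced case the Nichols algebra $\NicholsOf(X)$ is finite-dimensional, admits no nontrivial liftings and is generated in degree one, which forces $\Nichols\cong\NicholsOf(X)$ exactly.

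The main obstacle will then be to upgrade $\cB^{\spl}\hookrightarrow\cB$ to an equality, because a priori there could be objects of $\cB$ whose composition factors are not all local. Here the asymptotic hypothesis enters. Using the Feigin-Tipunin realization of $\cW_p(\g)$ as sections of a vertex algebra bundle over $G^\vee/B^\vee$ and the Lefschetz fixed-point computation of graded characters from \cites{FT10,Sug21}, the asymptotic growth of a simple $\cW$-module matches, via the assumed Frobenius-Perron correspondence, the Frobenius-Perron dimension of the corresponding simple object in $\cZ_\cC(\Rep(\NicholsOf(X))(\cC))$ computed as in \cite{BM17}. Summing these $\mathrm{FPdim}^2$ over the simple objects already present in the image of our embedding exhausts the total Frobenius-Perron dimension of $\Rep(\cW)$, so there is no room for additional simples. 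Hence $\cB^{\spl}=\cB$, the embedding $\Rep(\cW)\hookrightarrow\cZ_\cC(\Rep(\NicholsOf(X))(\cC))$ is essentially surjective, and Conjecture \ref{con_KL} holds in the simply-laced case.

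I expect the subtle step to be not the Hopf-algebraic identification of $\Nichols$, which is essentially mechanical once the primitive generators are produced, but rather the combination of the asymptotic-character statement with the FP-dimension count: one needs to be sure that the comparison is performed on the \emph{same} list of simple modules on both sides, and this ultimately rests on the $C_2$-cofiniteness style asymptotic analysis of \cite{BM17} whose compatibility with the relative center is what makes the dimension bookkeeping close.
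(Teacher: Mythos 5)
Your proposal follows essentially the same route as the paper's sketch: the Schauenburg reconstruction (Theorem~\ref{thm_Schauenburg}) to pass between $\Rep(\cW)$ and the relative centre of the twisted-module category, the relative Tannakian theorem from \cite{LM24} applied to the split subcategory $\cB^{\spl}$ to produce a connected Hopf algebra $\Nichols$ in $\cC$, the $\Ext^1$/primitive-element step to embed (a possibly deformed) $\NicholsOf(X)$ into $\Nichols$, Angiono's rigidity and degree-one generation to pin down $\Nichols\cong\NicholsOf(X)$, and finally the Frobenius--Perron versus asymptotic-character comparison in the simply-laced case to promote the subquotient to an equivalence. These are precisely the ingredients listed in the paper, in the same order and with the same role.

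The one place where your write-up is looser than the paper's is the passage from the FP-dimension count to the claim that $\cB^{\spl}=\cB$. The FP comparison lives at the level of $\Rep(\cW)$ (its subquotient versus the putative quantum group category), while $\cB^{\spl}\subset\cB=\Rep_\cW(\V)$ is a different category, and the precise relation between $\cZ_\cC(\cB^{\spl})$ and $\cZ_\cC(\cB)$ is not an obvious full subcategory inclusion. The paper deliberately phrases this as ``$\Rep(\cW)$ has a subquotient according to the conjecture,'' then closes the gap by the FP-dimension argument directly on $\Rep(\cW)$, without needing a separate statement $\cB^{\spl}=\cB$. You should reorganize the last step so that the FP-dimension matching directly forces the subquotient of $\Rep(\cW)$ to be all of $\Rep(\cW)$, rather than detouring through an unproved equality of subcategories of twisted modules. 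That reordering makes the same ingredients do the work cleanly.
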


\begin{problem}
Actually, there is a direct way to see how the algebra of screening operators appears in the category of twisted modules, namely through the process of Delta-deformation \cites{DLM96, Li97}, now by non-local screening operators. This method was used in \cites{FFHST02,AM09} to construct modules of $\cW$, but it seems quite clear that the output of the construction is actually a twisted $\V$-module. This leads to a  functor
$$\Rep(\NicholsOf(X))(\cC)\to \cB,$$
which should be fully faithful, but again essential surjectivity is not clear. 
\end{problem}

Let us also briefly mention other types of algebraic considerations, that are relevant to the picture:
\begin{itemize}
\item The associated topological field theory (more precisely:  modular functor) has in the nonsemisimple case a derived version. We discuss this in the research monograph \cite{LMSS23}. There are apparent connections to the logarithmic conformal field theory picture, but those are not clear yet. 
\item A conformal field theory comes with boundary conditions and defects, corresponding to module categories and bimodule categories, and in particular the Brauer-Picard group. In \cite{LP17} we have constructed in a general setup three familes of bimodule categories which we think might exhaust all possibilities, but I would still consider these questions to be widely open.
\end{itemize}

\section{References}

\begin{bibdiv}
\begin{biblist}

\bib{Lang1770}{other}{
  author={Lagrange, Joseph Louis},
  title={Réflexions sur la résolution algébrique des équations},
  date={1770},
 label={Lang1770},
}
\bib{Ruff1813}{other}{
    author={Ruffini, Paolo},
    title={Riflessioni intorno alla soluzione delle equazioni algebraiche generali opuscolo del cav. dott.}
    date={1813},
    label={Ruf1813},
}
\bib{Ab1826}{other}{
    author={Abel, Niels Henrik},
    title={Démonstration de l'impossibilité de la résolution algébrique des équations générales qui passent le quatrième degré},
    date={1826},
    label={Abel1826},
}
\bib{Lie1888}{other}{
    author={Lie, Sophus},
    title={Theorie der Transformationsgruppen},
    date={1888},
    label={Lie1888},
}
\bib{Koh88}{other}{
  author={Kohno, T.},
  title={Quantized enveloping algebras and monodromy of braid groups},
  date={1988},
}
\bib{Drin89}{article}{
   author={Drinfel\cprime d, V. G.},
   title={Quasi-Hopf algebras},
   language={Russian},
   journal={Algebra i Analiz},
   volume={1},
   date={1989},
   number={6},
   pages={114--148},
   issn={0234-0852},
   translation={
      journal={Leningrad Math. J.},
      volume={1},
      date={1990},
      number={6},
      pages={1419--1457},
      issn={1048-9924},
   },
   review={\MR{1047964}},
}
\bib{Lusz84}{book}{
   author={Lusztig, George},
   title={Characters of reductive groups over a finite field},
   series={Annals of Mathematics Studies},
   volume={107},
   publisher={Princeton University Press, Princeton, NJ},
   date={1984},
   pages={xxi+384},
   isbn={0-691-08350-9},
   isbn={0-691-08351-7},
   review={\MR{0742472}},
   doi={10.1515/9781400881772},
}
\bib{Lusz89}{article}{
   author={Lusztig, G.},
   title={Modular representations and quantum groups},
   conference={
      title={Classical groups and related topics},
      address={Beijing},
      date={1987},
   },
   book={
      series={Contemp. Math.},
      volume={82},
      publisher={Amer. Math. Soc., Providence, RI},
   },
   isbn={0-8218-5089-X},
   date={1989},
   pages={59--77},
   review={\MR{0982278}},
   doi={10.1090/conm/082/982278},
}


\bib{EGNO15}{book}{
   author={Etingof, Pavel},
   author={Gelaki, Shlomo},
   author={Nikshych, Dmitri},
   author={Ostrik, Victor},
   title={Tensor categories},
   series={Mathematical Surveys and Monographs},
   volume={205},
   publisher={American Mathematical Society, Providence, RI},
   date={2015},
   pages={xvi+343},
   isbn={978-1-4704-2024-6},
   review={\MR{3242743}},
   doi={10.1090/surv/205},
}
\bib{MacL52}{article}{
   author={MacLane, Saunders},
   title={Cohomology theory of Abelian groups},
   conference={
      title={Proceedings of the International Congress of Mathematicians,
      Cambridge, Mass., 1950, vol. 2},
   },
   book={
      publisher={Amer. Math. Soc., Providence, RI},
   },
   date={1952},
   pages={8--14},
   review={\MR{0045115}},
}
\bib{JS93}{article}{
   author={Joyal, Andr\'e},
   author={Street, Ross},
   title={Braided tensor categories},
   journal={Adv. Math.},
   volume={102},
   date={1993},
   number={1},
   pages={20--78},
   issn={0001-8708},
   review={\MR{1250465}},
   doi={10.1006/aima.1993.1055},
}
\bib{Ros78}{other}{
  author={Rosenberg, A.},
  title={Hopf algebras and Lie algebras in quasisymmetric categories,
Preprint},
  date={1978},
}
\bib{Maj95}{book}{
   author={Majid, Shahn},
   title={Foundations of quantum group theory},
   publisher={Cambridge University Press, Cambridge},
   date={1995},
   pages={x+607},
   isbn={0-521-46032-8},
   review={\MR{1381692}},
   doi={10.1017/CBO9780511613104},
}
\bib{Ber95}{arXiv}{
author={Berstein, Joseph},
title={Sacler lectures at Tel-Aviv University}
eprint={q-alg/9501032},
archiveprefix={arXiv},
label={Ber95},
}

\bib{Mon93}{book}{
   author={Montgomery, Susan},
   title={Hopf algebras and their actions on rings},
   series={CBMS Regional Conference Series in Mathematics},
   volume={82},
   publisher={Conference Board of the Mathematical Sciences, Washington, DC;
   by the American Mathematical Society, Providence, RI},
   date={1993},
   pages={xiv+238},
   isbn={0-8218-0738-2},
   review={\MR{1243637}},
   doi={10.1090/cbms/082},
}
\bib{Kas97}{article}{
   author={Kassel, Ch.},
   title={Quantum groups},
   language={Spanish, with English summary},
   conference={
      title={Algebra and operator theory},
      address={Tashkent},
      date={1997},
   },
   book={
      publisher={Kluwer Acad. Publ., Dordrecht},
   },
   isbn={0-7923-5094-4},
   date={1998},
   pages={213--236},
   review={\MR{1643398}},
}
\bib{Schn95}{other}{
  author={Schneider, Hans-Jürgen},
  title={Lectures on Hopf algebras, notes by Sonia Natale,
  \url{http://documents.famaf.unc.edu.ar/publicaciones/documents/serie b/BMat31.pdf}},
  date={1995},
}
%
\bib{Schau91}{book}{
   author={Schauenburg, Peter},
   title={Tannaka duality for arbitrary Hopf algebras},
   series={Algebra Berichte [Algebra Reports]},
   volume={66},
   publisher={Verlag Reinhard Fischer, Munich},
   date={1992},
   pages={ii+57},
   isbn={3-88927-100-6},
   review={\MR{1623637}},
}
\bib{Rad85}{article}{
   author={Radford, David E.},
   title={The structure of Hopf algebras with a projection},
   journal={J. Algebra},
   volume={92},
   date={1985},
   number={2},
   pages={322--347},
   issn={0021-8693},
   review={\MR{0778452}},
   doi={10.1016/0021-8693(85)90124-3},
}
\bib{LM24}{arXiv}{
author={Mombelli, Martin},
author={Lentner, Simon},
title={ Fibre functors and reconstruction of Hopf algebras },
date={2024},
eprint={arXiv:2311.14221},
archiveprefix={arXiv},
}

\bib{Lusz93}{book}{
   author={Lusztig, George},
   title={Introduction to quantum groups},
   series={Modern Birkh\"auser Classics},
   note={Reprint of the 1994 edition},
   publisher={Birkh\"auser/Springer, New York},
   date={2010},
   pages={xiv+346},
   isbn={978-0-8176-4716-2},
   review={\MR{2759715}},
   doi={10.1007/978-0-8176-4717-9},
}
\bib{AS10}{article}{
   author={Andruskiewitsch, Nicol\'as},
   author={Schneider, Hans-J\"urgen},
   title={On the classification of finite-dimensional pointed Hopf algebras},
   journal={Ann. of Math. (2)},
   volume={171},
   date={2010},
   number={1},
   pages={375--417},
   issn={0003-486X},
   review={\MR{2630042}},
   doi={10.4007/annals.2010.171.375},
}
\bib{AG19}{article}{
   author={Angiono, Ivan},
   author={Garcia Iglesias, Agustin},
   title={Pointed Hopf algebras: a guided tour to the liftings},
   journal={Rev. Colombiana Mat.},
   volume={53},
   date={2019},
   pages={1--44},
   issn={0034-7426},
   review={\MR{4053365}},
}
\bib{An13}{article}{
   author={Angiono, Iv\'an},
   title={On Nichols algebras of diagonal type},
   journal={J. Reine Angew. Math.},
   volume={683},
   date={2013},
   pages={189--251},
   issn={0075-4102},
   review={\MR{3181554}},
   doi={10.1515/crelle-2011-0008},
}
\bib{AG11}{article}{
   author={Angiono, Ivan},
   author={Garcia Iglesias, Agustin},
   title={Pointed Hopf algebras with standard braiding are generated in
   degree one},
   conference={
      title={Groups, algebras and applications},
   },
   book={
      series={Contemp. Math.},
      volume={537},
      publisher={Amer. Math. Soc., Providence, RI},
   },
   isbn={978-0-8218-5239-2},
   date={2011},
   pages={57--70},
   review={\MR{2799091}},
   doi={10.1090/conm/537/10566},
}
\bib{Heck09}{article}{
   author={Heckenberger, I.},
   title={Classification of arithmetic root systems},
   journal={Adv. Math.},
   volume={220},
   date={2009},
   number={1},
   pages={59--124},
   issn={0001-8708},
   review={\MR{2462836}},
   doi={10.1016/j.aim.2008.08.005},
}
\bib{HY08}{article}{ 
   author={Heckenberger, Istv\'an},
   author={Yamane, Hiroyuki},
   title={A generalization of Coxeter groups, root systems, and Matsumoto's
   theorem},
   journal={Math. Z.},
   volume={259},
   date={2008},
   number={2},
   pages={255--276},
   issn={0025-5874},
   review={\MR{2390080}},
   doi={10.1007/s00209-007-0223-3},
}
\bib{AHS10}{article}{
   author={Andruskiewitsch, Nicol\'as},
   author={Heckenberger, Istv\'an},
   author={Schneider, Hans-J\"urgen},
   title={The Nichols algebra of a semisimple Yetter-Drinfeld module},
   journal={Amer. J. Math.},
   volume={132},
   date={2010},
   number={6},
   pages={1493--1547},
   issn={0002-9327},
   review={\MR{2766176}},
}
\bib{CH15}{article}{
   author={Cuntz, Michael},
   author={Heckenberger, Istv\'an},
   title={Finite Weyl groupoids},
   journal={J. Reine Angew. Math.},
   volume={702},
   date={2015},
   pages={77--108},
   issn={0075-4102},
   review={\MR{3341467}},
   doi={10.1515/crelle-2013-0033},
}
\bib{HS20}{book}{
   author={Heckenberger, Istv\'an},
   author={Schneider, Hans-J\"urgen},
   title={Hopf algebras and root systems},
   series={Mathematical Surveys and Monographs},
   volume={247},
   publisher={American Mathematical Society, Providence, RI},
   date={2020},
   pages={xix+582},
   isbn={978-1-4704-5232-2},
   review={\MR{4164719}},
}
\bib{AA17}{article}{ 
   author={Andruskiewitsch, Nicol\'as},
   author={Angiono, Iv\'an},
   title={On finite dimensional Nichols algebras of diagonal type},
   journal={Bull. Math. Sci.},
   volume={7},
   date={2017},
   number={3},
   pages={353--573},
   issn={1664-3607},
   review={\MR{3736568}},
   doi={10.1007/s13373-017-0113-x},
}
\bib{MS00}{article}{
   author={Milinski, Alexander},
   author={Schneider, Hans-J\"urgen},
   title={Pointed indecomposable Hopf algebras over Coxeter groups},
   conference={
      title={New trends in Hopf algebra theory},
      address={La Falda},
      date={1999},
   },
   book={
      series={Contemp. Math.},
      volume={267},
      publisher={Amer. Math. Soc., Providence, RI},
   },
   isbn={0-8218-2126-1},
   date={2000},
   pages={215--236},
   review={\MR{1800714}},
   doi={10.1090/conm/267/04272},
}
\bib{Len12}{other}{
  author={Lentner, Simon D.},
  title={Orbifoldizing Hopf- and Nichols algebras, PhD thesis LMU Munich,
  \url{https://edoc.ub.uni-muenchen.de/15363/1/Lentner_Simon.pdf}},
  date={2012},
}
\bib{Len14}{article}{
   author={Lentner, Simon},
   title={New large-rank Nichols algebras over nonabelian groups with
   commutator subgroup $\mathbb{Z}_2$},
   journal={J. Algebra},
   volume={419},
   date={2014},
   pages={1--33},
   issn={0021-8693},
   review={\MR{3253277}},
   doi={10.1016/j.jalgebra.2014.07.017},
}
\bib{HV17}{article}{
   author={Heckenberger, I.},
   author={Vendramin, L.},
   title={A classification of Nichols algebras of semisimple Yetter-Drinfeld
   modules over non-abelian groups},
   journal={J. Eur. Math. Soc. (JEMS)},
   volume={19},
   date={2017},
   number={2},
   pages={299--356},
   issn={1435-9855},
   review={\MR{3605018}},
   doi={10.4171/JEMS/667},
}
\bib{Besp95}{article}{
   author={Bespalov, Yu.\ N.},
   title={Crossed modules, quantum braided groups and ribbon structures},
   language={English, with English and Russian summaries},
   journal={Teoret. Mat. Fiz.},
   volume={103},
   date={1995},
   number={3},
   pages={368--387},
   issn={0564-6162},
   translation={
      journal={Theoret. and Math. Phys.},
      volume={103},
      date={1995},
      number={3},
      pages={621--637},
      issn={0040-5779},
   },
   review={\MR{1472306}},
   doi={10.1007/BF02065863},
}
\bib{BV12}{article}{
   author={Brugui\`eres, Alain},
   author={Virelizier, Alexis},
   title={Quantum double of Hopf monads and categorical centers},
   journal={Trans. Amer. Math. Soc.},
   volume={364},
   date={2012},
   number={3},
   pages={1225--1279},
   issn={0002-9947},
   review={\MR{2869176}},
   doi={10.1090/S0002-9947-2011-05342-0},
}
\bib{LW22}{article}{
   author={Laugwitz, Robert},
   author={Walton, Chelsea},
   title={Constructing non-semisimple modular categories with relative
   monoidal centers},
   journal={Int. Math. Res. Not. IMRN},
   date={2022},
   number={20},
   pages={15826--15868},
   issn={1073-7928},
   review={\MR{4498166}},
   doi={10.1093/imrn/rnab097},
}
\bib{CGP15}{article}{
   author={Costantino, Francesco},
   author={Geer, Nathan},
   author={Patureau-Mirand, Bertrand},
   title={Some remarks on the unrolled quantum group of $\germ{sl}(2)$},
   journal={J. Pure Appl. Algebra},
   volume={219},
   date={2015},
   number={8},
   pages={3238--3262},
   issn={0022-4049},
   review={\MR{3320217}},
   doi={10.1016/j.jpaa.2014.10.012},
}

\bib{KS11}{article}{
   author={Kondo, Hiroki},
   author={Saito, Yoshihisa},
   title={Indecomposable decomposition of tensor products of modules over
   the restricted quantum universal enveloping algebra associated to
   ${\germ{sl}}_2$},
   journal={J. Algebra},
   volume={330},
   date={2011},
   pages={103--129},
   issn={0021-8693},
   review={\MR{2774620}},
   doi={10.1016/j.jalgebra.2011.01.010},
}
\bib{CGR20}{article}{
   author={Creutzig, Thomas},
   author={Gainutdinov, Azat M.},
   author={Runkel, Ingo},
   title={A quasi-Hopf algebra for the triplet vertex operator algebra},
   journal={Commun. Contemp. Math.},
   volume={22},
   date={2020},
   number={3},
   pages={1950024, 71},
   issn={0219-1997},
   review={\MR{4082225}},
   doi={10.1142/S021919971950024X},
}

\bib{GLO18}{arXiv}{
    author={Gainutdinov, Azat},
    author={Lentner, Simon},
    author={Ohrmann, Tobias},
    title={Modularization of small quantum groups},
    date={2018},
    eprint={arXiv:1809.02116},
    archiveprefix={arXiv},
}
\bib{AG03}{article}{
   author={Arkhipov, Sergey},
   author={Gaitsgory, Dennis},
   title={Another realization of the category of modules over the small
   quantum group},
   journal={Adv. Math.},
   volume={173},
   date={2003},
   number={1},
   pages={114--143},
   issn={0001-8708},
   review={\MR{1954457}},
   doi={10.1016/S0001-8708(02)00016-6},
}
\bib{Neg21}{article}{
   author={Negron, Cris},
   title={Log-modular quantum groups at even roots of unity and the quantum
   Frobenius I},
   journal={Comm. Math. Phys.},
   volume={382},
   date={2021},
   number={2},
   pages={773--814},
   issn={0010-3616},
   review={\MR{4227163}},
   doi={10.1007/s00220-021-04012-2},
}

\bib{Kac97}{book}{
   author={Kac, Victor},
   title={Vertex algebras for beginners},
   series={University Lecture Series},
   volume={10},
   publisher={American Mathematical Society, Providence, RI},
   date={1997},
   pages={viii+141},
   isbn={0-8218-0643-2},
   review={\MR{1417941}},
   doi={10.1090/ulect/010},
}
\bib{FBZ04}{book}{
   author={Frenkel, Edward},
   author={Ben-Zvi, David},
   title={Vertex algebras and algebraic curves},
   series={Mathematical Surveys and Monographs},
   volume={88},
   edition={2},
   publisher={American Mathematical Society, Providence, RI},
   date={2004},
   pages={xiv+400},
   isbn={0-8218-3674-9},
   review={\MR{2082709}},
   doi={10.1090/surv/088},
}
\bib{Schot08}{book}{
   author={Schottenloher, M.},
   title={A mathematical introduction to conformal field theory},
   series={Lecture Notes in Physics},
   volume={759},
   edition={2},
   publisher={Springer-Verlag, Berlin},
   date={2008},
   pages={xvi+249},
   isbn={978-3-540-68625-5},
   review={\MR{2492295}},
}
\bib{Len07}{other}{
  author={Lentner, Simon D.},
  title={Vertex algebras constructed from Hopf algebra structures, Diploma thesis LMU Munich,
  \url{http://simon.lentner.net/fileadmin/user_upload/Downloadable/Diploma_Thesis_Simon_Lentner.pdf}},
  date={2007},
}

\bib{FF88}{article}{
   author={Fe\u igin, B. L.},
   author={Fuks, D. B.},
   title={Cohomology of Lie groups and Lie algebras},
   language={Russian},
   conference={
      title={Current problems in mathematics. Fundamental directions, Vol.\
      21 (Russian)},
   },
   book={
      series={Itogi Nauki i Tekhniki},
      publisher={Akad. Nauk SSSR, Vsesoyuz. Inst. Nauchn. i Tekhn. Inform.,
   Moscow},
   },
   date={1988},
   pages={121--209, 215},
   review={\MR{0968446}},
}
\bib{MS88}{article}{
   author={Moore, Gregory},
   author={Seiberg, Nathan},
   title={Polynomial equations for rational conformal field theories},
   journal={Phys. Lett. B},
   volume={212},
   date={1988},
   number={4},
   pages={451--460},
   issn={0370-2693},
   review={\MR{0962600}},
   doi={10.1016/0370-2693(88)91796-0},
}
\bib{Verl88}{article}{
   author={Verlinde, Erik},
   title={Fusion rules and modular transformations in $2$D conformal field
   theory},
   journal={Nuclear Phys. B},
   volume={300},
   date={1988},
   number={3},
   pages={360--376},
   issn={0550-3213},
   review={\MR{0954762}},
   doi={10.1016/0550-3213(88)90603-7},
}
\bib{Gab94}{article}{
   author={Gaberdiel, Matthias},
   title={Fusion in conformal field theory as the tensor product of the
   symmetry algebra},
   journal={Internat. J. Modern Phys. A},
   volume={9},
   date={1994},
   number={26},
   pages={4619--4636},
   issn={0217-751X},
   review={\MR{1295760}},
   doi={10.1142/S0217751X94001849},
}
\bib{KL93}{article}{
   author={Kazhdan, D.},
   author={Lusztig, G.},
   title={Tensor structures arising from affine Lie algebras. I, II},
   journal={J. Amer. Math. Soc.},
   volume={6},
   date={1993},
   number={4},
   pages={905--947, 949--1011},
   issn={0894-0347},
   review={\MR{1186962}},
   doi={10.2307/2152745},
}
\bib{HL94}{article}{
   author={Huang, Yi-Zhi},
   author={Lepowsky, James},
   title={Tensor products of modules for a vertex operator algebra and
   vertex tensor categories},
   conference={
      title={Lie theory and geometry},
   },
   book={
      series={Progr. Math.},
      volume={123},
      publisher={Birkh\"auser Boston, Boston, MA},
   },
   isbn={0-8176-3761-3},
   date={1994},
   pages={349--383},
   review={\MR{1327541}},
   doi={10.1007/978-1-4612-0261-5\_13},
}
\bib{HLZ06}{article}{
   author={Huang, Yi-Zhi},
   author={Lepowsky, James},
   author={Zhang, Lin},
   title={A logarithmic generalization of tensor product theory for modules
   for a vertex operator algebra},
   journal={Internat. J. Math.},
   volume={17},
   date={2006},
   number={8},
   pages={975--1012},
   issn={0129-167X},
   review={\MR{2261644}},
   doi={10.1142/S0129167X06003758},
}
\bib{FLM88}{book}{
   author={Frenkel, Igor},
   author={Lepowsky, James},
   author={Meurman, Arne},
   title={Vertex operator algebras and the Monster},
   series={Pure and Applied Mathematics},
   volume={134},
   publisher={Academic Press, Inc., Boston, MA},
   date={1988},
   pages={liv+508},
   isbn={0-12-267065-5},
   review={\MR{0996026}},
}
\bib{DLM96}{article}{
   author={Dong, Chongying},
   author={Li, Haisheng},
   author={Mason, Geoffrey},
   title={Simple currents and extensions of vertex operator algebras},
   journal={Comm. Math. Phys.},
   volume={180},
   date={1996},
   number={3},
   pages={671--707},
   issn={0010-3616},
   review={\MR{1408523}},
}

\bib{DF84}{article}{
   author={Dotsenko, Vl.\ S.},
   author={Fateev, V. A.},
   title={Conformal algebra and multipoint correlation functions in $2$D
   statistical models},
   journal={Nuclear Phys. B},
   volume={240},
   date={1984},
   number={3},
   pages={312--348},
   issn={0550-3213},
   review={\MR{0762194}},
   doi={10.1016/0550-3213(84)90269-4},
}
\bib{Fel89}{article}{
   author={Felder, Giovanni},
   title={BRST approach to minimal models},
   journal={Nuclear Phys. B},
   volume={317},
   date={1989},
   number={1},
   pages={215--236},
   issn={0550-3213},
   review={\MR{0995219}},
   doi={10.1016/0550-3213(89)90568-3},
}
\bib{FF92}{article}{
   author={Feigin, Boris},
   author={Frenkel, Edward},
   title={Free field resolutions in affine Toda field theories},
   journal={Phys. Lett. B},
   volume={276},
   date={1992},
   number={1-2},
   pages={79--86},
   issn={0370-2693},
   review={\MR{1153194}},
   doi={10.1016/0370-2693(92)90544-E},
}
\bib{Ros97}{article}{
   author={Rosso, Marc},
   title={Integrals of vertex operators and quantum shuffles},
   journal={Lett. Math. Phys.},
   volume={41},
   date={1997},
   number={2},
   pages={161--168},
   issn={0377-9017},
   review={\MR{1463867}},
   doi={10.1023/A:1007352917712},
}
\bib{Sel44}{article}{
   author={Selberg, Atle},
   title={Remarks on a multiple integral},
   language={Norwegian},
   journal={Norsk Mat. Tidsskr.},
   volume={26},
   date={1944},
   pages={71--78},
   issn={2387-2187},
   review={\MR{0018287}},
}
\bib{FW08}{article}{
   author={Forrester, Peter J.},
   author={Warnaar, S. Ole},
   title={The importance of the Selberg integral},
   journal={Bull. Amer. Math. Soc. (N.S.)},
   volume={45},
   date={2008},
   number={4},
   pages={489--534},
   issn={0273-0979},
   review={\MR{2434345}},
   doi={10.1090/S0273-0979-08-01221-4},
}
\bib{TV03}{article}{
   author={Tarasov, V.},
   author={Varchenko, A.},
   title={Selberg-type integrals associated with $\germ {sl}_3$},
   journal={Lett. Math. Phys.},
   volume={65},
   date={2003},
   number={3},
   pages={173--185},
   issn={0377-9017},
   review={\MR{2033704}},
   doi={10.1023/B:MATH.0000010712.67685.9d},
}
\bib{War09}{article}{
   author={Warnaar, S. Ole},
   title={A Selberg integral for the Lie algebra $A_n$},
   journal={Acta Math.},
   volume={203},
   date={2009},
   number={2},
   pages={269--304},
   issn={0001-5962},
   review={\MR{2570072}},
   doi={10.1007/s11511-009-0043-x},
}
\bib{Suss24}{article}{
   author={Sussman, Ethan},
   title={The singularities of Selberg- and Dotsenko-Fateev-like integrals},
   journal={Ann. Henri Poincar\'e},
   volume={25},
   date={2024},
   number={9},
   pages={3957--4032},
   issn={1424-0637},
   review={\MR{4789304}},
   doi={10.1007/s00023-023-01402-1},
}

\bib{CMY23}{article}{ 
   author={Creutzig, Thomas},
   author={McRae, Robert},
   author={Yang, Jinwei},
   title={Ribbon tensor structure on the full representation categories of
   the singlet vertex algebras},
   journal={Adv. Math.},
   volume={413},
   date={2023},
   pages={Paper No. 108828, 79},
   issn={0001-8708},
   review={\MR{4526492}},
   doi={10.1016/j.aim.2022.108828},
}
\bib{NORW24}{arXiv}{
  author={Nakano, Hiromu},
  author={Orosz Hunziker, Florencia},
  author={Ros Camacho, Ana},
  author={Wood, Simon},
  title={Fusion rules and rigidity for weight modules over the simple admissible affine $sl(2)$ and $N=2$ superconformal vertex operator superalgebras},
  date={2024},
  label={NORW24},
  eprint={2411.11387},
  archiveprefix={arXiv},
}

\bib{An16}{article}{
   author={Angiono, Iv\'an},
   title={Distinguished pre-Nichols algebras},
   journal={Transform. Groups},
   volume={21},
   date={2016},
   number={1},
   pages={1--33},
   issn={1083-4362},
   review={\MR{3459702}},
   doi={10.1007/s00031-015-9341-x},
}

\bib{FGST05}{article}{
   author={Ga\u inutdinov, A. M.},
   author={Semikhatov, A. M.},
   author={Tipunin, I. Yu.},
   author={Fe\u igin, B. L.},
   title={The Kazhdan-Lusztig correspondence for the representation category
   of the triplet $W$-algebra in logorithmic conformal field theories},
   language={Russian, with Russian summary},
   journal={Teoret. Mat. Fiz.},
   volume={148},
   date={2006},
   number={3},
   pages={398--427},
   issn={0564-6162},
   translation={
      journal={Theoret. and Math. Phys.},
      volume={148},
      date={2006},
      number={3},
      pages={1210--1235},
      issn={0040-5779},
   },
   review={\MR{2283660}},
   doi={10.1007/s11232-006-0113-6},
}
\bib{AM08}{article}{
   author={Adamovi\'c, Dra\v zen},
   author={Milas, Antun},
   title={On the triplet vertex algebra $\scr W(p)$},
   journal={Adv. Math.},
   volume={217},
   date={2008},
   number={6},
   pages={2664--2699},
   issn={0001-8708},
   review={\MR{2397463}},
   doi={10.1016/j.aim.2007.11.012},
}
\bib{TW13}{article}{
   author={Tsuchiya, Akihiro},
   author={Wood, Simon},
   title={The tensor structure on the representation category of the $\scr
   W_p$ triplet algebra},
   journal={J. Phys. A},
   volume={46},
   date={2013},
   number={44},
   pages={445203, 40},
   issn={1751-8113},
   review={\MR{3120909}},
   doi={10.1088/1751-8113/46/44/445203},
}
\bib{CLR21}{arXiv}{
  author={Creutzig, Thomas},
  author={Lentner, Simon},
  author={Rupert, Matthew},
  title={Characterizing braided tensor categories associated to logarithmic vertex operator algebras},
  date={2021},
  eprint={2104.13262},
  archiveprefix={arXiv}
}
\bib{GN21}{arXiv}{
author={Gannon, Terry},
author={Negron, Cris},
title={Quantum $\mathrm{SL}(2)$ and logarithmic vertex operator algebras at $(p,1)$-central charge}
date={2023},
eprint={2104.12821
  archiveprefix={arXiv}
}
}
\bib{CLR23}{arXiv}{
  author={Creutzig, Thomas},
  author={Lentner, Simon},
  author={Rupert, Matthew},
  title={An algebraic theory for logarithmic Kazhdan-Lusztig correspondences},
  date={2023},
  eprint={2306.11492},
  archiveprefix={arXiv}
}
\bib{FT10}{arXiv}{
  author={Feigin, Boris},
  author={Tipunin, Ilja},
  title={Logarithmic CFTs connected with simple Lie algebras},
  date={2010},
  eprint={1002.5047},
  archiveprefix={arXiv},
}

\bib{AM14}{article}{
   author={Adamovi\'c, Dra{\v z}en},
   author={Milas, Antun},
   title={$C_2$-cofinite $\mathcal{W}$-algebras and their logarithmic
   representations},
   conference={
      title={Conformal field theories and tensor categories},
   },
   book={
      series={Math. Lect. Peking Univ.},
      publisher={Springer, Heidelberg},
   },
   isbn={978-3-642-39382-2},
   isbn={978-3-642-39383-9},
   date={2014},
   pages={249--270},
   review={\MR{3585369}},
}

\bib{ST12}{article}{
   author={Semikhatov, A. M.},
   author={Tipunin, I. Yu.},
   title={The Nichols algebra of screenings},
   journal={Commun. Contemp. Math.},
   volume={14},
   date={2012},
   number={4},
   pages={1250029, 66},
   issn={0219-1997},
   review={\MR{2965674}},
   doi={10.1142/S0219199712500290},
}

\bib{Len21}{article}{
   author={Lentner, Simon D.},
   title={Quantum groups and Nichols algebras acting on conformal field
   theories},
   journal={Adv. Math.},
   volume={378},
   date={2021},
   pages={Paper No. 107517, 71},
   issn={0001-8708},
   review={\MR{4184294}},
   doi={10.1016/j.aim.2020.107517},
}

\bib{Sug21}{article}{
   author={Sugimoto, Shoma},
   title={On the Feigin-Tipunin conjecture},
   journal={Selecta Math. (N.S.)},
   volume={27},
   date={2021},
   number={5},
   pages={Paper No. 86, 43},
   issn={1022-1824},
   review={\MR{4305499}},
   doi={10.1007/s00029-021-00662-1},
}

\bib{Sug23}{article}{
   author={Sugimoto, Shoma},
   title={Simplicity of higher rank triplet $W$-algebras},
   journal={Int. Math. Res. Not. IMRN},
   date={2023},
   number={8},
   pages={7169--7199},
   issn={1073-7928},
   review={\MR{4574397}},
   doi={10.1093/imrn/rnac189},
}

\bib{ALM13}{article}{
   author={Adamovi\'c, Dra\v zen},
   author={Lin, Xianzu},
   author={Milas, Antun},
   title={ADE subalgebras of the triplet vertex algebra $\scr{W}(p)$:
   $D$-series},
   journal={Internat. J. Math.},
   volume={25},
   date={2014},
   number={1},
   pages={1450001, 34},
   issn={0129-167X},
   review={\MR{3189758}},
   doi={10.1142/S0129167X14500013},
}
\bib{Bak16}{article}{ 
   author={Bakalov, Bojko},
   title={Twisted logarithmic modules of vertex algebras},
   journal={Comm. Math. Phys.},
   volume={345},
   date={2016},
   number={1},
   pages={355--383},
   issn={0010-3616},
   review={\MR{3509017}},
   doi={10.1007/s00220-015-2503-9},
}
\bib{Li97}{article}{
   author={Li, Haisheng},
   title={The physics superselection principle in vertex operator algebra
   theory},
   journal={J. Algebra},
   volume={196},
   date={1997},
   number={2},
   pages={436--457},
   issn={0021-8693},
   review={\MR{1475118}},
   doi={10.1006/jabr.1997.7126},
}
\bib{FFHST02}{article}{
   author={Fjelstad, J.},
   author={Fuchs, J.},
   author={Hwang, S.},
   author={Semikhatov, A. M.},
   author={Tipunin, I. Yu.},
   title={Logarithmic conformal field theories via logarithmic deformations},
   journal={Nuclear Phys. B},
   volume={633},
   date={2002},
   number={3},
   pages={379--413},
   issn={0550-3213},
   review={\MR{1910269}},
   doi={10.1016/S0550-3213(02)00220-1},
}
\bib{AM09}{article}{
   author={Adamovi\'c, Dra\v zen},
   author={Milas, Antun},
   title={Lattice construction of logarithmic modules for certain vertex
   algebras},
   journal={Selecta Math. (N.S.)},
   volume={15},
   date={2009},
   number={4},
   pages={535--561},
   issn={1022-1824},
   review={\MR{2565050}},
   doi={10.1007/s00029-009-0009-z},
}
\bib{Gait21}{article}{
   author={Gaitsgory, Dennis},
   title={A conjectural extension of the Kazhdan-Lusztig equivalence},
   journal={Publ. Res. Inst. Math. Sci.},
   volume={57},
   date={2021},
   number={3-4},
   pages={1227--1376},
   issn={0034-5318},
   review={\MR{4322010}},
   doi={10.4171/prims/57-3-14},
}

\bib{FL24b}{arXiv}{
  author={Feigin, Boris},
  author={Lentner, Simon},
  title={Twisted vertex algebra modules for irregular connections: A case study},
  date={2024},
  eprint={2411.16272},
  archiveprefix={arXiv},
}


\bib{Par95}{article}{
   author={Pareigis, Bodo},
   title={On braiding and dyslexia},
   journal={J. Algebra},
   volume={171},
   date={1995},
   number={2},
   pages={413--425},
   issn={0021-8693},
   review={\MR{1315904}},
   doi={10.1006/jabr.1995.1019},
}
\bib{KO02}{article}{
   author={Kirillov, Alexander, Jr.},
   author={Ostrik, Viktor},
   title={On a $q$-analogue of the McKay correspondence and the ADE
   classification of $\germ {sl}_2$ conformal field theories},
   journal={Adv. Math.},
   volume={171},
   date={2002},
   number={2},
   pages={183--227},
   issn={0001-8708},
   review={\MR{1936496}},
   doi={10.1006/aima.2002.2072},
}
\bib{FFRS06}{article}{
   author={Fr\"ohlich, J\"urg},
   author={Fuchs, J\"urgen},
   author={Runkel, Ingo},
   author={Schweigert, Christoph},
   title={Correspondences of ribbon categories},
   journal={Adv. Math.},
   volume={199},
   date={2006},
   number={1},
   pages={192--329},
   issn={0001-8708},
   review={\MR{2187404}},
   doi={10.1016/j.aim.2005.04.007},
}\bib{SY24}{arXiv}{
  author={Shimizu, Kenichi},
  author={Yadav, Harshit},
  title={Commutative exact algebras and modular monoidal categories},
  date={2024},
  eprint={arXiv:2408.06314},
  archiveprefix={arXiv},
}
\bib{HKL15}{article}{
   author={Huang, Yi-Zhi},
   author={Kirillov, Alexander, Jr.},
   author={Lepowsky, James},
   title={Braided tensor categories and extensions of vertex operator
   algebras},
   journal={Comm. Math. Phys.},
   volume={337},
   date={2015},
   number={3},
   pages={1143--1159},
   issn={0010-3616},
   review={\MR{3339173}},
   doi={10.1007/s00220-015-2292-1},
}
\bib{CKM24}{article}{
   author={Creutzig, Thomas},
   author={Kanade, Shashank},
   author={McRae, Robert},
   title={Tensor categories for vertex operator superalgebra extensions},
   journal={Mem. Amer. Math. Soc.},
   volume={295},
   date={2024},
   number={1472},
   pages={vi+181},
   issn={0065-9266},
   isbn={978-1-4704-6724-1; 978-1-4704-7772-1},
   review={\MR{4720880}},
   doi={10.1090/memo/1472},
}
\bib{BD13}{article}{
   author={Boyarchenko, Mitya},
   author={Drinfeld, Vladimir},
   title={A duality formalism in the spirit of Grothendieck and Verdier},
   journal={Quantum Topol.},
   volume={4},
   date={2013},
   number={4},
   pages={447--489},
   issn={1663-487X},
   review={\MR{3134025}},
   doi={10.4171/QT/45},
}
\bib{ENOM10}{article}{
   author={Etingof, Pavel},
   author={Nikshych, Dmitri},
   author={Ostrik, Victor},
   title={Fusion categories and homotopy theory},
   note={With an appendix by Ehud Meir},
   journal={Quantum Topol.},
   volume={1},
   date={2010},
   number={3},
   pages={209--273},
   issn={1663-487X},
   review={\MR{2677836}},
   doi={10.4171/QT/6},
}
\bib{DN21}{article}{
   author={Davydov, Alexei},
   author={Nikshych, Dmitri},
   title={Braided Picard groups and graded extensions of braided tensor
   categories},
   journal={Selecta Math. (N.S.)},
   volume={27},
   date={2021},
   number={4},
   pages={Paper No. 65, 87},
   issn={1022-1824},
   review={\MR{4281262}},
   doi={10.1007/s00029-021-00670-1},
}
\bib{Tak79}{article}{
   author={Takeuchi, Mitsuhiro},
   title={Relative Hopf modules---equivalences and freeness criteria},
   journal={J. Algebra},
   volume={60},
   date={1979},
   number={2},
   pages={452--471},
   issn={0021-8693},
   review={\MR{0549940}},
   doi={10.1016/0021-8693(79)90093-0},
}
\bib{Skry07}{article}{
   author={Skryabin, Serge},
   title={Projectivity and freeness over comodule algebras},
   journal={Trans. Amer. Math. Soc.},
   volume={359},
   date={2007},
   number={6},
   pages={2597--2623},
   issn={0002-9947},
   review={\MR{2286047}},
   doi={10.1090/S0002-9947-07-03979-7},
}
\bib{CMSY24}{arXiv}{
  author={Creutzig, Thomas},
  author={McRae, Robert},
  author={Shimizu, Kenichi},
  author={Yadav, Harshit},
  title={Commutative algebras in Grothendieck-Verdier categories, rigidity, and vertex operator algebras},
  date={2024},
  eprint={2409.14618},
  archiveprefix={arXiv},
}

\bib{Sch01}{article}{
   author={Schauenburg, Peter},
   title={The monoidal center construction and bimodules},
   journal={J. Pure Appl. Algebra},
   volume={158},
   date={2001},
   number={2-3},
   pages={325--346},
   issn={0022-4049},
   review={\MR{1822847}},
   doi={10.1016/S0022-4049(00)00040-2},
}

\bib{BM17}{article}{
   author={Bringmann, Kathrin},
   author={Milas, Antun},
   title={$W$-algebras, higher rank false theta functions, and quantum
   dimensions},
   journal={Selecta Math. (N.S.)},
   volume={23},
   date={2017},
   number={2},
   pages={1249--1278},
   issn={1022-1824},
   review={\MR{3624911}},
   doi={10.1007/s00029-016-0289-z},
}
\bib{Len25}{arXiv}{
  author={Lentner, Simon},
  title={A conditional algebraic proof of the logarithmic Kazhdan-Lusztig correspondence},
  date={2025},
  eprint={2501.10735},
  archiveprefix={arXiv},
}
%
%
%
\bib{ALS23}{article}{
   author={Angiono, Iv\'an},
   author={Lentner, Simon},
   author={Sanmarco, Guillermo},
   title={Pointed Hopf algebras over nonabelian groups with nonsimple
   standard braidings},
   journal={Proc. Lond. Math. Soc. (3)},
   volume={127},
   date={2023},
   number={4},
   pages={1185--1245},
   issn={0024-6115},
   review={\MR{4655349}},
   doi={10.1112/plms.12559},
}
\bib{ALSW21}{arXiv}{ 
  author={Allen, Robert},
  author={Lentner, Simon D.},
  author={Schweigert, Christoph},
  author={Wood, Simon},
  title={Duality structures for module categories of vertex operator algebras and the Feigin Fuchs boson},
  date={2021},
  eprint={2107.05718},
  archiveprefix={arXiv},
}
\bib{FL22}{article}{
   author={Flandoli, Ilaria},
   author={Lentner, Simon D.},
   title={Algebras of non-local screenings and diagonal Nichols algebras},
   journal={SIGMA Symmetry Integrability Geom. Methods Appl.},
   volume={18},
   date={2022},
   pages={Paper No. 018, 81},
   review={\MR{4392114}},
   doi={10.3842/SIGMA.2022.018},
}
\bib{FL24}{article}{
    author={Feigin, Boris L.},
    author={Lentner, Simon D.},
    title={Vertex algebras with big centre and a Kazhdan-Lusztig correspondence},
    journal={Advances in Mathematics},
    volume={457},
    date={2024},
}
\bib{LP17}{article}{
   author={Lentner, Simon},
   author={Priel, Jan},
   title={Three natural subgroups of the Brauer-Picard group of a Hopf
   algebra with applications},
   journal={Bull. Belg. Math. Soc. Simon Stevin},
   volume={24},
   date={2017},
   number={1},
   pages={73--106},
   issn={1370-1444},
   review={\MR{3625786}},
   doi={10.36045/bbms/1489888815},
}
\bib{LMSS23}{book}{
   author={Lentner, Simon},
   author={Mierach, Svea Nora},
   author={Schweigert, Christoph},
   author={Sommerh\"auser, Yorck},
   title={Hochschild cohomology, modular tensor categories, and mapping
   class groups. I},
   series={SpringerBriefs in Mathematical Physics},
   volume={44},
   publisher={Springer, Singapore},
   date={2023},
   pages={ix+68},
   isbn={978-981-19-4644-8},
   isbn={978-981-19-4645-5},
   review={\MR{4841457}},
}
\end{biblist}
\end{bibdiv}
\end{document}